\def\namedlabel#1#2{\begingroup
    #2%
    \def\@currentlabel{#2}%
    \phantomsection\label{#1}\endgroup
}
\numberwithin{equation}{section}
\newcommand{\AMS}{$\mathcal{A}$\mathrm{Ker}n-.1667em\lower.5ex\hbox
        {$\mathcal{M}$}\mathrm{Ker}n-.125em$\mathcal{S}$}
\DeclareMathOperator{\Ass}{Ass}
\DeclareMathOperator{\Ad}{Ad}
\DeclareMathOperator{\fppf}{fppf}
\DeclareMathOperator{\Lie}{Lie}
\DeclareMathOperator{\rang}{rk}
\DeclareMathOperator{\Spec}{Spec}
\DeclareMathOperator{\Supp}{Supp}
\newcommand{\emptyinnprod}{\langle\kern.3em,\kern.13em\rangle}
\def\esssup_#1{\underset{#1}{\mathrm{ess\,sup\, }}}
\newcommand{\ndot}{\raisebox{.4ex}{.}}
\def\ifquery{\if00}
\def\query#1{\setlength\marginparwidth{65pt} 
\marginpar{\raggedright\fontsize{7.81}{9} 
\selectfont\upshape\hrule\smallskip 
#1\par\smallskip\hrule}} 
\def\query#1{}
\definecolor{Huayi}{rgb}{0.0, 0.0, 1.0}
\definecolor{Hide}{rgb}{1.0, 0.0, 0.0}
\title{Harder-Narasimhan games}
\date{\today}
\author{Huayi Chen}
\address{Universit\'e de Paris and Sorbonne Universit\'e, CNRS, IMJ-PRG, F-75013 Paris, France}
\email{huayi.chen@imj-prg.fr}
\urladdr{webusers.imj-prg.fr/~huayi.chen}
\author{Marion Jeannin}
\address{Uppsala University}
\email{marion.jeannin@math.uu.se}
\urladdr{https://www.katalog.uu.se/empinfo/?id=N22-369}
\begin{document}
\def\smfbyname{}

\begin{abstract}
In this article, we study the notion of semi-stability and the Harder-Narasimhan filtration from a game-theoretic point of view. This allows us to provide a unified proof for the existence and uniqueness of the Harder-Narasimhan filtration in various settings and offer a conceptual interpretation for semi-stability conditions. As an application, we establish the existence and uniqueness of a coprimary filtration for modules of finite type over a commutative N\oe therian ring and interpret it as a Harder-Narasimhan filtration.  
\end{abstract}


\maketitle

\tableofcontents



\section{Introduction}

Harder-Narasimhan filtration is a classic construction in algebraic geometry. It originates from the notion of stability  introduced by Mumford \cite{MR0175899} and Takemoto \cite{MR337966} in the study of moduli spaces of vector bundles. Let $C$ be a regular projective curve over a field $k$. For any non-zero vector bundle $E$ on $C$, the \emph{slope} of $E$ is defined as the quotient of $\deg(E)$ by the rank of $E$ and is denoted by $\mu(E)$. The vector bundle $E$ is said to be \emph{semi-stable} if the slope of any non-zero vector subbundle of $E$ is bounded from above by $\mu(E)$. Harder and Narasimhan \cite{MR364254} have shown that any non-zero vector bundle $E$ has a unique filtration by vector subbundles
\[0=E_0\subsetneq E_1\subsetneq \ldots\subsetneq E_n=E\]
such that all subquotient sheaves $E_i/E_{i-1}$ are semi-stable vector bundles over $C$ and that the successive slopes satisfy the following inequalities:
\[\mu(E_1/E_0)>\ldots>\mu(E_n/E_{n-1}).\]
This result has then been generalized to higher-dimensional projective varieties by Shatz \cite{MR498573} and Maruyama \cite{MR615853}. Moreover, in the context of  complex analytic geometry, Bruasse \cite{MR1843867,MR1990006} has established an analogue of Harder-Narasimhan filtration for Hermitian vector bundles equipped with a semi-connection.

The notion of semi-stability and the  construction of Harder-Narasimhan filtration have analogues in various branches of mathematics, which  often exhibit quite different natures. Examples include quiver representations  \cite{MR1906875} in representation theory,  flat vector bundles on an affine manifold \cite{MR2903192} in differential geometry, quasi-coherent sheaves of finite rank on a non-commutative torus  \cite{MR2330589} in non-commutative geometry, finite and flat group schemes over a valuation ring of mixed characteristic \cite{MR2673421} in arithmetic geometry, filtered isocrystals \cite{MR2119719} in $p$-adic Hodge theory, and linear codes \cite{MR3912956} in the theory of error correction codes.

Motivated by the similarity of Harder-Narasimhan filtrations in various contexts, several formalisms  have been proposed in the literature, either from a categorical point of view (see for example \cite{MR1480783,MR2373143,MR2559690,MR2571693}), or from an order theory point of view \cite{MR3928223}. Most of these works begin with a small category $\mathcal C$ with a  zero object, on which Grothendieck's group is defined. Given two morphisms  $\deg(\ndot)$ and $\operatorname{rk}(\ndot)$ from the Grothendieck group of $\mathcal C$ to $\mathbb R$, such that $\operatorname{rk}(\ndot)$ takes non-zero values on classes of non-zero objects, one defines the slope function $\mu(\ndot)$ on the set of non-zero objects as  \[\big(X\in\operatorname{obj}(\mathcal C)\big)\longmapsto \frac{\deg(X)}{\operatorname{rk}(X)},\] where, by abuse of notation we still denote by $X$ the class in the Grothendieck group that it represents, see for example \cite[\S II.2.a)]{MR1600006}, 
\cite[\S1.2]{MR2373143}, \cite[\S1.3.1]{MR2571693}, \cite[\S4.1]{MR2559690}. In the approach of modular lattice, the slope function is also defined in a similar way, see \cite[\S2.4]{MR3928223}.  

In the work \cite{MR4292529}, an analogue of Harder-Narasimhan theory has been proposed in the framework of adelic vector bundles, which is not included in the formalisms mentioned above. For convenience of readers, we recall this construction in the particular case of normed lattices. By \emph{normed lattice}, we mean a free abelian group of finite type $E$, equipped with a norm $\|\ndot\|$ on the finite-dimensional real vector space $E_{\mathbb R}= E\otimes_{\mathbb Z}\mathbb R$. We often use the expression $\overline{E}$ to denote the pair $(E,\|\ndot\|)$. The \emph{rank} of the normed lattice $(E,\|\ndot\|)$ is defined as that of $E$ over $\mathbb Z$, which also identifies with the dimension of the real vector space $E_{\mathbb R}$. Clearly, a subgroup $F$ of $E$ equipped with the restriction of $\|\ndot\|$ to $F_{\mathbb R}$ also forms a normed lattice. Similarly, a free quotient group $G$ of $E$ equipped with quotient norm of $\|\ndot\|$ on $G_{\mathbb R}$ also forms a normed lattice.

Given a normed lattice $(E,\|\ndot\|)$ of rank $r$, we define the \emph{Arakelov degree} of $(E,\|\ndot\|)$ as 
\[\widehat{\deg}(E,\|\ndot\|)=-\ln\|e_1\wedge\cdots\wedge e_r\|_{\det},\]
where $(e_i)_{i=1}^r$ is a basis of $E$ over $\mathbb Z$, and $\|\ndot\|_{\det}$ is the \emph{determinant norm} associated with $\|\ndot\|$, which is the norm on $\det(E_{\mathbb R})$ defined as 
\[\forall\,\eta\in\det(E_{\mathbb R}),\quad \|\eta\|_{\det}=\inf_{\begin{subarray}{c}(x_i)_{i=1}^r\in E^r\\
\eta=x_1\wedge\cdots\wedge x_r\end{subarray}}\|x_1\|\cdots\|x_r\|.\]
In the case where $E$ is non-zero, the \emph{slope} of $(E,\|\ndot\|)$ is defined as the ratio
\[\widehat{\mu}(E,\|\ndot\|):=\frac{\widehat{\deg}(E,\|\ndot\|)}{\rang_{\mathbb Z}(E)},\] and the minimal slope of $(E,\|\ndot\|)$ is defined as
\[\widehat{\mu}_{\min}(E,\|\ndot\|):=\inf_{E\twoheadrightarrow G\neq\boldsymbol{0}}\widehat{\mu}(\overline G),\]
where $\overline G$ runs over the set of free quotient groups of $E$ equipped with quotient norms. It has been shown in \cite[Theorem 4.3.58]{MR4292529} that, for any non-zero normed lattice $(E,\|\ndot\|)$, there exists a unique filtration 
\[0=E_0\subsetneq E_1\ldots\subsetneq E_n=E\]
of subgroups of $E$ such that each subquotient $E_i/E_{i-1}$ is a free Abelian group and forms a semi-stable normed lattice if we equip it with the subquotient norm, and that the following inequalities are satisfied:
\[\widehat{\mu}_{\min}(\overline{E_1/E_0})>\ldots>\widehat{\mu}_{\min}(\overline{E_n/E_{n-1}}).\] 

In the case where the norm $\|\ndot\|$ is induced by an inner product, this result is due to Stuhler \cite{MR0424707} (see also \cite{MR780079} for the link with reduction theory) and is quite similar with the classic Harder-Narasimhan theory of vector bundles. However, in the case of a general norm, the existence and uniqueness of a flag with semi-stable subquotient and decreasing minimal slopes is rather unexpected. In fact, the Arakelov degree function $\widehat{\deg}(\ndot)$ here is not additive with respect to short exact sequences, which goes beyond the existing Harder-Narasimhan theory. Moreover, the semi-stability condition is also slightly different from the classic formulation:  a non-zero normed lattice $\overline F$ is said to be \emph{semi-stable} if for any non-zero subgroup $F'$ of $F$ one has 
\[\widehat{\mu}_{\min}(\overline{F'})\leqslant\widehat{\mu}_{\min}(\overline F).\] 
Note that, in the case where the norm of $\overline F$ is induced by an inner product, this condition is actually equivalent to 
\[\forall\,F'\subseteq F,\;F'\neq\boldsymbol{0},\quad \widehat{\mu}(\overline{F'})\leqslant\widehat{\mu}(\overline F), \]
which is similar to the semi-stability condition of vector bundles. However, in general, these two conditions are not equivalent and the latter is not adequate for establishing the Harder-Narasimhan theory.

The result \cite[Theorem 4.3.58]{MR4292529} was obtained via an interpretation using $\mathbb R$-filtrations of Harder-Narasimhan theory developed in \cite{MR2768967}. Not only the semi-stability condition is mysterious, but also the proof is rather different from the classic ones. The first attempt to include this result in a categorical formalism is due to Li \cite{Li}, where a Harder-Narasimhan theory is developed in the framework of a  proto-abelian category. The new feature of \cite{Li} is to consider slope functions valued in a totally ordered set, while most formalisms require the slope function to be real-valued and often the quotient of two additive functions. However, the conditions imposed to the slope function in  \cite{Li} seem to be satisfied rather by the maximal slope function in most situations, and the category considered should satisfy strong chain conditions. Therefore, the application of this theory to diverse concrete cases demands an extra effort of adaptation.

Let us go back to the semi-stability condition in the above example. Let $\overline E$ be a non-zero normed lattice. Its semi-stability condition could be interpreted as an equality 
\[\sup_{F\subseteq E}\inf_{F'\subsetneq F}\widehat{\mu}(\overline{F/F'})=\widehat{\mu}_{\min}(\overline E),\]
where $F$ runs over the set of all subgroups of $E$, and $F'$ runs over the set of strict subgroups of $F$ such that $F/F'$ is free.
Moreover, a key argument in the proof of \cite[Theorem 4.3.58]{MR4292529} shows that, for any non-zero normed lattice $\overline E$, the following equality holds (see \cite[\S4.3.7]{MR4292529}):
\[\widehat{\mu}_{\min}(\overline E)=\inf_{F'\subsetneq E}\sup_{F\subseteq E,\, F\supsetneq F'}\widehat{\mu}(\overline{F/F'}),\]
where $F'$ runs over the set of strict subgroups of $E$, and $F$ runs over the set of subgroups of $E$ that contain $F'$ strictly and such that $F/F'$ is free. If we combine this equality with the above interpretation of the semi-stability condition, we can  write the latter as an equality between minimax and maximin:
\[\sup_{ F\subseteq E}\inf_{F'\subsetneq F}\widehat{\mu}(\overline{F/F'})=\inf_{F'\subsetneq E}\sup_{F\subseteq E,\, F\supsetneq F'}\widehat{\mu}(\overline{F/F'}).\]
This is a typical situation considered in game theory.

Inspired by this observation, we construct a  zero-sum game with two players,  Alice and Bob. They pick successively  (either Alice or Bob goes first) two elements $a$ and $b$ respectively in a partially ordered set $(\mathscr L,\leqslant)$ with supremum and infimum of finite subsets (i.e., a bounded lattice), under the constraint $a<b$. Note that this constraint prevents Alice from taking the greatest element $\top$ and Bob from taking the least element $\perp$. Let \[P_{<}(\mathscr L)=\{(a,b)\in\mathscr L^2\,|\,a<b\}\] be the set of strictly ordered pairs of $\mathscr L$. If $(S,\leqslant )$ is a complete partially ordered set and if $\mu:P_{<}(\mathscr L)\rightarrow S$ is a map, then $\mu$ determines a pay-off function of the above game:  Alice aims to minimize the pay-off $\mu(a,b)$ while Bob aims to maximize it. We call this game a \emph{Harder-Narasimhan game}. 

If Alice goes first and picks $a$, then all possible pay-offs are given by
\[\text{$\mu(a,b)$, where $b\in\mathscr L$, $b>a$,}\]
and hence $\sup_{b>a}\mu(a,b)$ delimits from above the possible pay-offs. Therefore the optimal pay-off threshold when Alice goes first is 
\[\mu_A^*:=\inf_{a\in\mathscr L\setminus\{\top\}}\sup_{b>a}\mu(a,b),\]
where $\top$ denotes the greatest element of $\mathscr L$. We say that the Harder-Narasimhan game is \emph{semi-stable} if, for any $y\in\mathscr L$ which is not the least element of $\mathscr L$, it is not less favorable for Alice when considering the restriction of the Harder-Narasimhan game to the subset of $\mathscr L$ of elements $\leqslant y$.

If $(x,y)$ is a pair in $P_{<}(\mathscr L)$, we denote by $\mathscr L_{[x,y]}$ the subset \[\{z\in\mathscr L\,|\,x\leqslant z\leqslant y\}\]
of $\mathscr L$. Note that $P_{<}(\mathscr L_{[x,y]})$ is a subset of $P_{<}(\mathscr L)$, and the restriction of $\mu$ to $P_{<}(\mathscr L_{[x,y]})$ defines the pay-off function of a Harder-Narasimhan game on $\mathscr L_{[x,y]}$. We denote by $\mu_{[x,y]}$ the restriction of $\mu$ to $P_{<}(\mathscr L_{[x,y]})$ and we denote by $\mu_A(x,y)$ the value $\mu_{[x,y],A}^*$. Then the semi-stability the Harder-Narasimhan game with pay-off function $\mu$ can be expressed as 
\[\forall\,y\in\mathscr L\setminus{\perp},\quad \mu_A(\perp,y)\not>\mu_A(\perp,\top).\]

The main result of the article is as follows (the total order condition for $(S,\leqslant)$ could be relaxed if only the existence part is needed), see Definition \ref{Def: HN filtration} and Theorem \ref{Thm: uniqueness of HN filtration}.

\begin{theo}\label{Thm: main theorem}Let $(\mathscr L,\leqslant)$ be a bounded lattice, $(S,\leqslant)$ be a complete totally ordered set, and $\mu:P_{<}(\mathscr L)\rightarrow S$ be a map.
Assume the following conditions:
\begin{enumerate}[label=\rm(\alph*)]
\item The pay-off function $\mu$ is convex, namely, for any $(x,y)\in\mathscr L^2$ such that $x\not\leqslant y$, one has
$\mu(x\wedge y,x)\leqslant\mu(y,x\vee y)$.

\item\label{Item: ACC} The bounded lattice $(\mathscr L,\leqslant)$ satisfies the ascending chain condition, namely, there does not exist any family $(x_n)_{n\in\mathbb N}$ of elements of $\mathscr L$ such that $x_0<x_1<\ldots<x_n<x_{n+1}<\ldots$.

\item\label{Item: muADCC} The bounded lattice $(\mathscr L,\leqslant)$ satisfies the $\mu_A$-descending chain condition, namely, for any $a\in\mathscr L$, there does not exist any family $(x_n)_{n\in\mathbb N}$ of elements of $\mathscr L$ such that $x_n>a$ for any $n$ and
$x_0>x_1>\ldots>x_n>x_{n+1}>\ldots$, 
that satisfies the following slope inequalities:
\[\mu_A(a,x_0)<\mu_A(a,x_1)<\ldots<\mu_A(a,x_n)<\mu_A(a,x_{n+1})<\ldots.\]
\end{enumerate}
Then there exists a unique increasing sequence
\[\operatorname{\perp}=a_0<a_1<\ldots<a_n=\top\]
such that: 
\begin{enumerate}[label=\rm(\arabic*)]
\item For any $i\in\{1,\ldots,n\}$, the Harder-Narasimhan game on $\mathscr L_{[a_{i-1},a_i]}$ with pay-off function $\mu_{[a_{i-1},a_i]}$ is semi-stable.

\item The following inequalities hold:
\[\mu_A(a_0,a_1)>\ldots>\mu_A(a_{n-1},a_n).\]
\end{enumerate}
\end{theo}

In the example of a non-zero normed lattice $\overline E$, we take $\mathscr L=\mathscr L(E)$ to be the set of all subgroups of $E$, ordered by the relation of inclusion $\subseteq$. The pay-off function \[\mu:P_{<}(\mathscr L(E))\longrightarrow[-\infty,+\infty]\] is defined as follows: if $F'$ and $F$ are two subgroups of $E$ such that $F'\subsetneq F$, in the case where $\rang_{\mathbb Z}(F')=\rang_{\mathbb Z}(F)$, we set $\mu(F',F)=+\infty$; otherwise we define
\[\mu(F',F)=\widehat{\mu}\big(\overline{(F/F')/(F/F')_{\operatorname{tor}}}\big)+\frac{\ln(\operatorname{card}((F/F')_{\operatorname{tor}}))}{\rang_{\mathbb Z}(F/F')}.\]
The function $\mu$ then defines the pay-off function of a Harder-Narasimhan game on $\mathscr L(E)$, which satisfies the conditions of Theorem \ref{Thm: main theorem}.  Moreover, if $(F',F)$ is a pair of subgroups of $E$ such that $F/F'$ is a free group of positive rank, then we have 
\[\mu_A(F',F)=\widehat{\mu}_{\min}(\overline{F/F'}).\]
By applying Theorem \ref{Thm: main theorem}, we recover the existence and uniqueness of Harder-Narasimhan filtration,  as proven in \cite[Theorem 4.3.58]{MR4292529}. 
 
In the above theorem, the conditions \ref{Item: ACC} and \ref{Item: muADCC} can be compared with the finite length condition of modular lattices in \cite{MR3928223,MR3912956}.  Additionally, the work \cite{Li} proposes the ascending chain condition and the (usual) descending  chain condition in a categorical setting. Although the ascending chain condition is natural (one often considers N\oe therian modules), the descending chain condition is rather restrictive in most algebraic geometry applications of the Harder-Narasimhan theory. Extra technical efforts are needed to adapt the formalism to include classic examples. However, condition \ref{Item: muADCC}, which takes into account the pay-off function $\mu$ and is much weaker than the classic descending chain condition, 
can be easily verified in most concrete situations. The flexibility of the theorem allows for its applications in various contexts. Let us mention an application in commutative algebra. 

Consider a commutative unitary N\oe therian ring $R$ and a non-zero $R$-module $M$ of finite type. Let $\mathscr L$ be the set of all sub-$R$-modules of $M$, which is equipped with the order of inclusion. This forms a bounded lattice. We equip the prime spectrum $\operatorname{Spec}(R)$ with an arbitrary total order $\leqslant$ that extends the order of inclusion. Let $S_0$ be the collection of all finite subsets of $\operatorname{Spec}(R)$. We equip it with an arbitrary total order $\leqslant$ that extends the order of inclusion and such that, for $(\mathfrak p,\mathfrak q)\in\operatorname{Spec}(R)^2$, $\{\mathfrak p\}\leqslant\{\mathfrak q\}$ in $S_0$ if and only if $\mathfrak p\subseteq\mathfrak q$ in $\operatorname{Spec}(R)$. Let $(S,\leqslant)$ be the Dedekind-MacNeille completion of $(S_0,\leqslant)$.

For any pair $(N',N)$ of sub-$R$-modules of $M$ such that $N'\subsetneq N$, we define $\mu(N',N)=\operatorname{Ass}(N/N')$ as the set of all associated prime ideals of the quotient module $N/N'$. This map $\mu:P_{<}(\mathscr L)\rightarrow S$ defines the pay-off function of a Harder-Narasimhan game. It is worth noting that $\mu_A(N',N)$ is equal to the least element of $\operatorname{Ass}(N/N')$ (see Proposition \ref{Pro: muA N'N}),  and the Harder-Narasimhan game with pay-off function $\mu$ is semi-stable if and only if $\operatorname{Ass}(M)$ has cardinal $1$, namely $M$ is a coprimary $R$-module. 
Furthermore, the bounded lattice $(\mathscr L,\subseteq)$ satisfies the ascending chain condition and the $\mu_A$-descending chain condition. Hence we obtain the following result.

\begin{theo}\label{Thm: coprimary filtration}
There exists a unique sequence 
\[\boldsymbol{0}=M_0\subsetneq M_1\subsetneq\ldots\subsetneq M_n=M\]
of sub-$R$-modules of $M$, along with a strictly decreasing sequence
\[\mathfrak p_1>\ldots>\mathfrak p_n\]
in $(\operatorname{Spec}(R),\leqslant)$ such that, for any $i\in\{1,\ldots,n\}$, the subquotient $R$-module $M_{i}/M_{i-1}$ is $\mathfrak p_i$-coprimary. 
\end{theo}

Note that this result is related to the coprimary decomposition (or secondary representation) of $R$-modules introduced in \cite{MR0342506,MR327744,MR314822}. We say that $M$ admits a coprimary decomposition if it can be written in the form of a finite sum of coprimary submodules. It can be shown that, if $M$ admits a coprimary decomposition $M=N_1+\cdots+N_p$, then it admits a minimal decomposition, where the $R$-modules $N_1,\ldots,N_p$ are coprimary with respect to distinct prime ideals (attached prime ideals) and none of the summands is redundant. From such a decomposition, one can deduce a coprimary filtration predicted by Theorem \ref{Thm: coprimary filtration} (since $R$ is N\oe therian, it can be shown that attached prime ideals are exactly associated prime ideals, see \cite[Theorem 1]{MR517734}).  It is worth noting that only a restrictive family of modules are coprimary decomposable. It is known for Artinian modules \cite{MR314822} and injective modules \cite{MR414538}. In fact, it has been proven by \cite[Corollary 1.6]{MR1273181} that, in the case where $M$ is reduced, it is coprimary decomposable if and only if $R/\operatorname{ann}(M)$ is an Artinian ring. Therefore, Theorem \ref{Thm: coprimary filtration} provides a good alternative to  coprimary decomposition.

If one applies the theory of Harder-Narasimhan game to the classic case of a non-zero vector bundle $E$ over a regular projective curve $C$, one would take $\mathscr L$ to be the set of all vector subbundles of $E$ and the pay-off function would be defined as follows:
\[\forall\,(F',F)\in P_{\subsetneq}(\mathscr L),\quad \mu(F',F):=\begin{cases}
\frac{\deg(F)-\deg(F')}{\operatorname{rk}(F)-\operatorname{rk}(F')},&\text{if $\operatorname{rk}(F)>\operatorname{rk}(F')$},\\
+\infty,&\text{if $\operatorname{rk}(F)=\operatorname{rk}(F')$.}
\end{cases}\] 
Note that the semi-stability condition of the Harder-Narasimhan game appears different from the slope semi-stability of $E$, which is given by
\[\frac{\deg(F)}{\operatorname{rk}(F)}\leqslant\frac{\deg(E)}{\operatorname{rk}(E)}\]
for any non-zero vector subbundle $F$ of $E$. In our article, we prove that this slope semi-stability condition of $E$ is actually equivalent to the semi-stability condition of the corresponding Harder-Narasimhan game. We introduce a concept  called \emph{slope-like} condition for the pay-off function, which is closely related to the see-saw principle discussed in \cite{MR1480783}. Under this condition, we establish a connection between the semi-stability of the Harder-Narasimhan game to the semi-stability condition in its usual form. Moreover, we also establish a connection between the semi-stability of the game and the Nash equilibrium.  This result demonstrates that the game-theoretical framework is well-suited for understanding the Harder-Narasimhan filtration and  generalizing its construction in various settings.

\begin{theo}
Assume that the following conditions are satisfied:
\begin{enumerate}[label=\rm(\roman*)]
\item  For any ascending chain 
$x_0< x_1<\ldots< x_n< x_{n+1}<\ldots$
of elements of $\mathscr L$, 
there exists $N\in\mathbb N$ such that  
$\mu(x_N,x_{N+1})\leqslant\mu(x_{N},\top)$.

\item 
For any descending chain 
$x_0> x_1>\ldots> x_n>x_{n+1}>\ldots$
of elements of $\mathscr L$, there exists $N\in\mathbb N$ such that
$\mu(\perp,x_N)\leqslant\mu(x_{N+1},x_N)$.

\item The complete lattice is totally ordered, and the pay-off function $\mu$ is slope-like \textup{(}i.e., for $x<y<z$, one has $\mu(x,y)<\mu(x,z)<\mu(y,z)$ or $\mu(x,y)>\mu(x,z)>\mu(y,z)$ or $\mu(x,y)=\mu(x,z)=\mu(y,z)$\textup{)}.
\end{enumerate}
Then the following statements are equivalent:
\begin{enumerate}[label=\rm(\alph*)]
\item $\mu_{\max}(\top):=\displaystyle\sup_{x\in\mathscr L\setminus\{\perp\}}\mu(\perp,x)=\mu(\perp,\top)$.
\item $\mu_{\min}(\top):=\displaystyle\inf_{x\in\mathscr L\setminus\{\top\}}\mu(x,\top)=\mu(\perp,\top)$.
\item $\mu_{\min}(\top)=\mu_{\max}(\top)$.
\item The Harder-Narasimhan game with pay-off function $\mu$ has Nash equilibrium.
\item The Harder-Narasimhan game with pay-off function $\mu$ is semi-stable.
\end{enumerate}
\end{theo}

The article is organised as follows: In the second section, we introduce the notion of Harder-Narasimhan game and prove some basic properties of the  optimal pay-off threshold function. In the third section, we consider the semi-stability condition of Harder-Narasimhan games and prove the existence and uniqueness of Harder-Narasimhan filtration. In the fourth section, we discuss the slope-like pay-off function and establish the link between the semi-stability of the Harder-Narasimhan game and the classic slope semi-stability. Additionally, we  discuss Jordan-Hölder filtrations of a semi-stable Harder-Narasimhan game.   


\section{Harder-Narasimhan games}

In this section, we consider a partially ordered set $(\mathscr L,\leqslant)$, which has a greatest element denoted as $\top$ and a least element denoted as $\perp$, with $\operatorname{\perp}\neq\operatorname{\top}$. In the language of order theory, $(\mathscr L,\leqslant)$ is referred to as a \emph{bounded poset}. We use the notation  $P_{<}(\mathscr L)$ to represent  the set of strictly ordered pairs of elements in $\mathscr L$. This set can be defined as follows:
\[P_{<}(\mathscr L):=\{(x,y)\in\mathscr L^2\,|\,x< y\},\]
where $x<y$ signifies the relation $x\leqslant y$ and $x\neq y$.

\subsection{Description of the game}
\phantomsection\label{Sec: HN game}
Let $(S,\leqslant)$ be a complete lattice, namely a partially ordered where every subset of $S$ admits a supremum (least upper bound) and an infimum (greatest lower bound). The assumption of completeness here is unproblematic since we can consider the Dedekind-MacNeille completion (see \cite[\S11]{MR1501929}). Let $\mu:P_{<}(\mathscr L)\rightarrow S$ be a map. We call \emph{Harder-Narasimhan game} on $(\mathscr L,\leqslant)$ with pay-off function $\mu$ a non-cooperative sequential game of two steps described as follows: 

\begin{quote}\it\hskip\parindent Two players, Alice and Bob, pick successively \textup{(}either Alice or Bob goes first\textup{)} two elements $x_A$ and $x_B$ from the set $\mathscr L$, with the constraint $x_A<x_B$. The objective of Alice is to minimize the pay-off $\mu(x_A,x_B)$ while that  Bob aims to maximize it.
\end{quote} 

If Alice goes first, she selects an element of $\mathscr L\setminus\{\top\}$ (as the constraint prevents her from choosing $\top$). Bob then selects an element from $\mathscr L$ that is strictly greater than Alice's choice. Similarly, if Bob goes first, he chooses an element of $\mathscr L$ other than $\perp$, and then Alice selects an element from $\mathscr L$ that is strictly less than what Bob has chosen. Note that this game is asymmetric: depending on whether Alice or Bob starts, the strategy sets differ after the first step of the game.

\begin{defi}
Let $x$ be an element of $\mathscr L\setminus\{\top\}$. We denote by $V_A(x)$ the set of all possible pay-offs in the game if Alice goes first and picks $x$. It can be defined as:
\[V_A(x)=\{\mu(x,y)\,|\,y\in\mathscr L,\;x<y\}.\]
Moreover, $\sup(V_A(x))$ delimits from above possible pay-offs in the case where Alice goes first and picks $x$. We define 
\[\mu_A^*:=\inf_{x\in\mathscr L\setminus\{\top\}}\sup(V_A(x))=\inf_{x\in\mathscr L\setminus\{\top\}}\sup_{\begin{subarray}{c}y\in\mathscr L\\
x<y\end{subarray}}\mu(x,y).\]
This value represents the optimal pay-off threshold when Alice goes first. 
\end{defi}

\begin{defi}
Let $(x,y)$ be an element of $P_{<}(\mathscr L)$. We denote by $\mathscr L_{[x,y]}$ the interval in $\mathscr L$ delimited by $x$ and $y$, defined as
\[\mathscr L_{[x,y]}:=\{w\in\mathscr L\,|\,x\leqslant w\leqslant y\}.\]
Note that $\mathscr L_{[x,y]}$ forms a bounded poset under the restriction of the order $\leqslant$. Its greatest element is $y$, its least element is $x$. Moreover, $P_{<}(\mathscr L_{[x,y]})$ is a subset of $P_{<}(\mathscr L)$, and the restriction of $\mu$ to $P_{<}(\mathscr L_{[x,y]})$ gives the pay-off function of a Harder-Narasimhan game, denoted by $\mu_{[x,y]}$. This Harder-Narasimhan game is called the \emph{restriction} of the initial game to $\mathscr L_{[x,y]}$.

We denote by $\mu_A(x,y)$ the optimal pay-off threshold for the Harder-Narasimhan game restricted to $\mathscr L_{[x,y]}$ when Alice goes first. By definition, one has 
\begin{equation}\label{Equ: mu A def}\mu_A(x,y):=\mu_{[x,y],A}^*=\inf_{\begin{subarray}{c}a\in\mathscr L\\
x\leqslant a<y
\end{subarray}}\sup_{\begin{subarray}{c}
b\in\mathscr L\\
a<b\leqslant y
\end{subarray}}\mu(a,b).
\end{equation}
In the case where $x=\operatorname{\perp}$, the value of $\mu_A(\perp, y)$ is denoted by $\mu_A(y)$ for simplicity.

To simplify the notation, for any $(a,y)\in P_{<}(\mathscr L)$, we denote  $\mu_{\max}(a,y)$ as the element
\[\sup_{\begin{subarray}{c}
b\in\mathscr L\\
a<b\leqslant y
\end{subarray}}\mu(a,b).\]
With this notation, the optimal pay-off threshold $\mu_A(x,y)$ can also be written as 
\[\mu_A(x,y)=\inf_{\begin{subarray}{c}
a\in\mathscr L\\
x\leqslant a<y
\end{subarray}}\mu_{\max}(a,y).\]
\end{defi}

\subsection{Convexity condition}

In this subsection, we assume that any finite subset of $\mathscr L$ has a supremum   and an infimum in $(\mathscr L,\leqslant)$. In the language of order theory, $(\mathscr L,\leqslant)$ is referred to as a \emph{bounded lattice}. If $x$ and $y$ are two elements of $\mathscr L$, we denote  the supremum of $\{x,y\}$ as $x\vee y$, and  the infimum as $x\wedge y$. We also consider a complete lattice $(S,\leqslant)$ together with a map $\mu:P_{<}(\mathscr L)\rightarrow S$, which enables the study of a Harder-Narasimhan game as described in \S\ref{Sec: HN game}.

\begin{defi}\label{Def: convex pay-off}
We say that the pay-off function $\mu$ is \emph{convex} if, for any $(x,y)\in\mathscr L^2$ such that $x\not\leqslant y$, the following inequality holds: 
\[\mu(x\wedge y,x)\leqslant \mu(y,x\vee y).\]
(Note that the condition $x\not\leqslant y$ implies that $x\wedge y<x$ and $y<x\vee y$). It is worth mentioning that, if $\mu$ is convex, the function $\mu_{[x,y]}$ is also convex for any $(x,y)\in P_{<}(\mathscr L)$. The choice of this terminology is motivated by the observation that a convex function in the usual sense has monotonically increasing difference. The convexity condition for the pay-off function $\mu$ is automatically satisfied  when $(\mathscr L,\leqslant)$ is a totally ordered set.
\end{defi}

\begin{lemm}\phantomsection\label{Lem: majoration mu A star}
Assume that the function $\mu:P_{<}(\mathscr L)\rightarrow S$ is convex. Let $w$ and $x$ be two elements of $\mathscr L\setminus\{\perp\}$ such that $x\not\leqslant w$. For any $(u,t)\in\mathscr L^2$ such that $u\leqslant x\wedge w$ and $x\vee w\leqslant t$, the following inequalities hold:
\begin{equation}\label{Equ: mu A m x upper bound}\mu_A(u,x)\leqslant\mu_{\max}(x\wedge w,x)\leqslant \mu_{\max}(w,t).\end{equation}
Moreover, one has
\begin{equation}\label{Equ: second upper bound}\mu_A(u,x)\leqslant\mu_{A}(w,x\vee w).\end{equation} 
\end{lemm}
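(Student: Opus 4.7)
The plan is to decompose the first displayed chain into two separate inequalities, each handled by unwinding the definitions of $\mu_A$ and $\mu_{\max}$, with the middle step being the only one that uses convexity. The second display will then follow by applying the first with $w$ replaced by a variable element of the interval $[w,x\vee w)$ and passing to the infimum.

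For the leftmost inequality $\mu_A(u,x)\leqslant\mu_{\max}(x\wedge w,x)$, the hypothesis $x\not\leqslant w$ forces $x\wedge w<x$, and $u\leqslant x\wedge w$ is given by assumption. Thus $a:=x\wedge w$ is an admissible choice in the infimum defining $\mu_A(u,x)=\inf_{u\leqslant a<x}\mu_{\max}(a,x)$, so substituting $a=x\wedge w$ yields the bound directly.

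The middle inequality $\mu_{\max}(x\wedge w,x)\leqslant\mu_{\max}(w,t)$ is where convexity does the work. I would fix an arbitrary $b\in\mathscr L$ with $x\wedge w<b\leqslant x$ and make two small observations. First, $b\not\leqslant w$: otherwise $b\leqslant x\wedge w$, contradicting $b>x\wedge w$. Second, $b\wedge w=x\wedge w$: the inequality $b\wedge w\leqslant x\wedge w$ follows from $b\leqslant x$, while the reverse follows from $x\wedge w\leqslant b$ together with $x\wedge w\leqslant w$. Applying the convexity hypothesis to the pair $(b,w)$ (legitimate since $b\not\leqslant w$) then yields
\[\mu(x\wedge w,b)=\mu(b\wedge w,b)\leqslant\mu(w,b\vee w).\]
Moreover $b\leqslant x$ gives $b\vee w\leqslant x\vee w\leqslant t$, and since $b\not\leqslant w$ we have $b\vee w>w$, so $b\vee w$ is admissible in the supremum defining $\mu_{\max}(w,t)$. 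Therefore $\mu(w,b\vee w)\leqslant\mu_{\max}(w,t)$, and taking the supremum over $b$ completes the middle inequality.

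Finally, to obtain the second display I will reapply the already established first display, but with $w$ replaced by a generic element $a$ satisfying $w\leqslant a<x\vee w$, and with $t:=x\vee w$. The hypotheses transfer cleanly: $a\neq\perp$ because $a\geqslant w\neq\perp$; $x\not\leqslant a$, for otherwise $x\vee w\leqslant a$, contradicting $a<x\vee w$; $u\leqslant x\wedge w\leqslant x\wedge a$ since $w\leqslant a$; and $x\vee a\leqslant x\vee w=t$ since $a\leqslant x\vee w$. The first display then reads $\mu_A(u,x)\leqslant\mu_{\max}(a,x\vee w)$, and taking the infimum over such $a$ produces $\mu_A(u,x)\leqslant\mu_A(w,x\vee w)$. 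I do not expect any genuine obstacle beyond carefully bookkeeping the hypotheses; the only slightly non-obvious move is the identification $b\wedge w=x\wedge w$, which is precisely what permits convexity to be invoked with the correct operand on the left-hand side.
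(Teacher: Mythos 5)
Your proof is correct and follows essentially the same route as the paper's: take $a=x\wedge w$ in the infimum for the first inequality, use the identification $b\wedge w=x\wedge w$ to invoke convexity for the middle one, and derive the second display by re-applying the first with $w$ replaced by a generic $a\in[w,x\vee w)$ and $t=x\vee w$, then passing to the infimum. No gaps.
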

\begin{proof} The condition $x\not\leqslant w$ implies that $x\wedge w<x$.
The first inequality of \eqref{Equ: mu A m x upper bound} follows directly from the definition of 
\[\mu_A(u,x) :=\inf_{\begin{subarray}{c}a\in\mathscr L\\
u\leqslant a<x
\end{subarray}}\sup_{\begin{subarray}{c}
b\in\mathscr L\\ a< b\leqslant y\end{subarray}} \mu(a,b).\]

For any $b\in\mathscr L$ such that $x\wedge w<b\leqslant x$, one has 
\[x\wedge w=(x\wedge w)\wedge w\leqslant b\wedge w\leqslant x\wedge w.\]
Therefore,
$b\wedge w=x\wedge w<b$, and hence, $b\not\leqslant w$. Moreover, the assumption $b\leqslant x$ implies $b\vee w\leqslant x\vee w\leqslant t$. By the convexity of $\mu$, we obtain: \[\mu(x\wedge w,b)=\mu(b\wedge w,b)\leqslant\mu(w,b\vee w)\leqslant \mu_{\max}(w,t).\]
Taking the supremum with respect to $b$, we obtain the second inequality of \eqref{Equ: mu A m x upper bound}.

Next, we prove the inequality \eqref{Equ: second upper bound}. Let $y$ be an element of $\mathscr L$ such that $w\leqslant y<x\vee w$. We claim that $x\not\leqslant y$ since otherwise
\[x\vee w\leqslant y\vee w=y<x\vee w,\]
which leads to a contradiction. Moreover, one has 
\[x\vee w\leqslant x\vee y\leqslant x\vee (x\vee w)=x\vee w,\]
so that $x\vee w=x\vee y$. By replacing $w$ with $y$ and $t$ with $x\vee w$ in \eqref{Equ: mu A m x upper bound}, we obtain:
\[\mu_A(u,x)\leqslant\mu_{\max}(y,x\vee w).\]
Taking the infimum with respect to $y$, we obtain \eqref{Equ: second upper bound}.
\end{proof}

\begin{rema}
Note that the function $\mu_{\max}:P_{<}(\mathscr L)\rightarrow S$ is also the pay-off function of a Harder-Narasimhan game. Lemma \ref{Lem: majoration mu A star} implies that, if $\mu$ is convex then so is $\mu_{\max}$. Moreover, for any $(a,y)\in P_{<}(\mathscr L)$, one has 
\[\sup_{\begin{subarray}{c}b\in\mathscr L\\
a<b\leqslant y
\end{subarray}}\mu(a,b)=\sup_{\begin{subarray}{c}b\in\mathscr L\\
a<b\leqslant y
\end{subarray}}\mu_{\max}(a,b).\]
This equality implies that $\mu_{\max,A}(x,y)=\mu_{A}(x,y)$ for any $(x,y)\in P_{<}(\mathscr L)$.
\end{rema}

\begin{prop}\phantomsection\label{Pro: suite exacte et mu A}
Let $x$, $y$ and $z$ be elements of $\mathscr L$ such that $x<y<z$. Then the following inequality holds: \begin{equation}\label{Equ: muA(y,z)geq muA(x,z)}\mu_A(y,z)\geqslant\mu_A(x,z).\end{equation}If the function $\mu$ is convex the following statements hold:
\begin{enumerate}[label=\rm(\alph*)]
\item One has
\[\mu_A(x,z)\geqslant\inf\{\mu_A(x,y),\mu_A(y,z)\}.\]
\item If $\mu_A(x,y)\geqslant\mu_A(y,z)$, then  $\mu_A(y,z)=\mu_A(x,z)$; if $\mu_A(x,y)<\mu_A(y,z)$, then  \[\mu_A(x,y)\leqslant\mu_A(x,z)\leqslant\mu_A(y,z).\] 

\item\label{Item: muAxy and muAyz comparable} If $\mu_A(x,y)$ and $\mu_A(y,z)$ are comparable \textup{(}this is true notably when $(S,\leqslant)$ is totally ordered\textup{)}, or if the infimum
\[\mu_A(x,z)=\inf_{\begin{subarray}{c}a\in\mathscr L\\
x\leqslant a<z
\end{subarray}}\mu_{\max}(a,z) \]
is attained, then, \[\text{either $\mu_A(y,z)=\mu_A(x,z)$, or $\mu_A(x,y)\leqslant\mu_A(x,z)<\mu_A(y,z)$.}\]
\end{enumerate} 
\end{prop}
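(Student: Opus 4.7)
The plan is to handle the monotonicity inequality \eqref{Equ: muA(y,z)geq muA(x,z)} directly from the definition \eqref{Equ: mu A def}, then reduce all subsequent parts of the statement to a single case analysis on the position of the running index $a$ relative to $y$, invoking Lemma \ref{Lem: majoration mu A star} as the only non-trivial input. For \eqref{Equ: muA(y,z)geq muA(x,z)} itself, it suffices to observe that the infimum defining $\mu_A(y,z)$ is taken over the set $\{a\,|\,y\leqslant a<z\}$, which is contained in the set $\{a\,|\,x\leqslant a<z\}$ over which $\mu_A(x,z)$ is an infimum of the very same quantities $\mu_{\max}(a,z)$.

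The heart of the argument is part (a). I would prove that for every $a\in\mathscr L$ with $x\leqslant a<z$,
\[\mu_{\max}(a,z)\geqslant \inf\{\mu_A(x,y),\mu_A(y,z)\},\]
after which taking the infimum over $a$ yields the claim. Two cases arise depending on whether $y\leqslant a$ or not. When $y\leqslant a$, the quantity $\mu_{\max}(a,z)$ is itself one of the terms whose infimum defines $\mu_A(y,z)$, so the bound is direct. When $y\not\leqslant a$, apply Lemma \ref{Lem: majoration mu A star} with the lemma's ``$w$'' taken to be $a$ and its ``$x$'' taken to be $y$, together with $u:=x$ and $t:=z$; the hypotheses $y\not\leqslant a$, $x\leqslant y\wedge a$, and $y\vee a\leqslant z$ are immediate from $x\leqslant y$, $x\leqslant a$, $y<z$, $a<z$ in the bounded lattice $\mathscr L$. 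The lemma's conclusion $\mu_A(x,y)\leqslant\mu_{\max}(a,z)$ is exactly the bound needed in this second case.

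Parts (b) and (c) then fall out by combining (a) with \eqref{Equ: muA(y,z)geq muA(x,z)}. For (b), if $\mu_A(x,y)\geqslant\mu_A(y,z)$ the infimum in (a) equals $\mu_A(y,z)$, so $\mu_A(x,z)\geqslant\mu_A(y,z)$, and combined with \eqref{Equ: muA(y,z)geq muA(x,z)} this gives equality; if $\mu_A(x,y)<\mu_A(y,z)$ the same argument yields the left inequality $\mu_A(x,y)\leqslant \mu_A(x,z)$, and \eqref{Equ: muA(y,z)geq muA(x,z)} supplies the right one. For (c), in the comparable case the two branches of (b) directly produce one of the stated alternatives, using that $\mu_A(x,z)$ and $\mu_A(y,z)$ are unconditionally comparable by \eqref{Equ: muA(y,z)geq muA(x,z)}. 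When instead the infimum defining $\mu_A(x,z)$ is attained at some $a^*$, I would re-run the dichotomy of (a) on this specific $a^*$: if $y\leqslant a^*$ then $\mu_A(x,z)=\mu_{\max}(a^*,z)\geqslant \mu_A(y,z)$, forcing equality; otherwise Lemma \ref{Lem: majoration mu A star} yields $\mu_A(x,y)\leqslant \mu_{\max}(a^*,z)=\mu_A(x,z)$, and the two alternatives of (c) are distinguished by whether or not $\mu_A(x,z)=\mu_A(y,z)$.

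The main subtlety will be the comparability bookkeeping in (c): $(S,\leqslant)$ need not be totally ordered, so one must remember throughout that $\mu_A(x,z)\leqslant \mu_A(y,z)$ holds unconditionally, in order to ensure that the strict inequality ``$\mu_A(x,z)<\mu_A(y,z)$'' in the second alternative is meaningful even when $\mu_A(x,y)$ and $\mu_A(y,z)$ are themselves incomparable in $S$.
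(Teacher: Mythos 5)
Your proposal is correct and follows essentially the same route as the paper: the containment of index sets for \eqref{Equ: muA(y,z)geq muA(x,z)}, the dichotomy $y\leqslant a$ versus $y\not\leqslant a$ combined with Lemma \ref{Lem: majoration mu A star} for (a), and the same derivations of (b) and (c), including re-running the dichotomy at the attaining element $a^*$. Your closing remark about using the unconditional inequality $\mu_A(x,z)\leqslant\mu_A(y,z)$ to upgrade $\neq$ to $<$ in (c) is exactly the point the paper makes as well.
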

\begin{proof}
By definition one has 
\[\mu_A(y,z):=\inf_{\begin{subarray}{c}a\in\mathscr L\\
y\leqslant a<z
\end{subarray}}\mu_{\max}(a,z)\geqslant\inf_{\begin{subarray}{c}a\in\mathscr L\\
x\leqslant a<z
\end{subarray}}\mu_{\max}(a,z)=:\mu_A(x,z),\]
which proves \eqref{Equ: muA(y,z)geq muA(x,z)}.
In the rest of the proof, we assume that the pay-off function $\mu$ is convex. 

(a) Let $a\in\mathscr L$ such that $x\leqslant a<z$. If $y\leqslant a$, then by definition
\[\mu_A(y,z)\leqslant \mu_{\max}(a,z).\]

If $y\not\leqslant a$, then by Lemma \ref{Lem: majoration mu A star}, as $a \vee y \leqslant z$ one has  
\[\mu_A(x,y)\leqslant\mu_{\max}(a,z).\]
So in all cases the following inequality holds:
\[\inf\{\mu_A(x,y),\mu_A(y,z)\}\leqslant \mu_{\max}(a,z).\]
Taking the infimum with respect to $a$, we obtain
\[\inf\{\mu_A(x,y),\mu_A(y,z)\}\leqslant \mu_A(x,z).\]

(b) If $\mu_A(x,y)\geqslant\mu_{A}(y,z)$, then 
\[\inf\{\mu_A(x,y),\mu_A(y,z)\}=\mu_A(y,z),\]
thus \eqref{Equ: muA(y,z)geq muA(x,z)} and the result of (a) lead to $\mu_A(y,z)=\mu_A(x,z)$. If $\mu_A(x,y)<\mu_A(y,z)$, then one has
\[\inf\{\mu_A(x,y),\mu_A(y,z)\}=\mu_A(x,y).\]
Hence \eqref{Equ: muA(y,z)geq muA(x,z)} and (a) imply 
\[\mu_A(x,y)\leqslant\mu_A(x,z)\leqslant\mu_A(y,z).\]

(c) We first assume that $\mu_A(x,y)$ and $\mu_A(y,z)$ are comparable. By (b), we obtain that either the equality $\mu_A(y,z)=\mu_A(x,z)$ holds or one has $\mu_A(x,y)<\mu_A(y,z)$, and hence  
\[\mu_A(x,y)\leqslant\mu_A(x,z)\leqslant\mu_A(y,z).\]
Note that the last inequality is strict since $\mu_A(x,z)\neq\mu_A(y,z)$.

In the following, we suppose that there exists $a\in\mathscr L$ such that $x\leqslant a<z$ and that $\mu_A(x,z)=\mu_{\max}(a,z)$. If $y\leqslant a$, then by definition
\[\mu_A(y,z)\leqslant\mu_{\max}(a,z)=\mu_A(x,z).\]
Thus equality 
$\mu_A(y,z)=\mu_A(x,z)$ holds, thanks to \eqref{Equ: muA(y,z)geq muA(x,z)} (this is the same argument as for the proof of (a)). In particular, if $\mu_A(y,z)\neq \mu_A(x,z)$, then $y\not\leqslant a$ and thus, by Lemma \ref{Lem: majoration mu A star}, one has 
\[\mu_{A}(x,y)\leqslant\mu_{\max}(a,z)=\mu_A(x,z).\] 
Combining this with \eqref{Equ: muA(y,z)geq muA(x,z)} and the condition $\mu_A(y,z)\neq \mu_A(x,z)$, we obtain
\[\mu_A(x,y)\leqslant\mu_A(x,z)<\mu_A(y,z).\]

\end{proof}

\begin{rema}
Assume that $(S,\leqslant)$ is a totally ordered set and that the pay-off function $\mu$ is convex. Let $x$ be an element of $\mathscr L\setminus\{\perp,\top\}$ such that $\mu_A(x)>\mu_A(\top)$. By Proposition \ref{Pro: suite exacte et mu A} (c), we obtain $\mu_A(x,\top)=\mu_A(\top)$. This equality shows that, in the case where Alice goes first, it is not less favorable for her to consider elements of $\mathscr L_{[x,\top]}\setminus\{\top\}$ than to consider all elements of $\mathscr L\setminus\{\top\}$.
\end{rema}

\begin{prop}\phantomsection\label{Pro: mu A star inequality}
Assume that the function $\mu$ is convex. Let $u$, $x$ and $y$ be elements of $\mathscr L$ such that $u< x$ and $u<y$. 
\begin{enumerate}[label=\rm(\alph*)]
\item The following inequality holds:
\[\mu_A(u,x\vee y)\geqslant\inf\{\mu_A(u,x),\mu_A(u,y)\}.\]

\item If $\mu_A(u,x)$ and $\mu_A(u,y)$ are comparable, or if  the infimum
\[\mu_A(u,x\vee y)=\inf_{\begin{subarray}{c}a\in\mathscr L\\
u\leqslant a<x\vee y
\end{subarray}}\mu_{\max}(a,x\vee y) \]
is attained, then, either $\mu_A(u,x\vee y)\geqslant\mu_A(u,x)$ or $\mu_A(u,x\vee y)\geqslant\mu_A(u,y)$. 
\end{enumerate}
\end{prop}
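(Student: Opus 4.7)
The plan is to mimic the strategy used in Proposition \ref{Pro: suite exacte et mu A}, but exchanging the role of intervals for the join $x\vee y$. The key combinatorial observation is that if $a\in\mathscr L$ satisfies $u\leqslant a<x\vee y$, then necessarily $x\not\leqslant a$ or $y\not\leqslant a$; otherwise we would have $x\vee y\leqslant a$, contradicting $a<x\vee y$. This dichotomy is what allows Lemma \ref{Lem: majoration mu A star} to be applied on one of the two sides.

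For part (a), I would fix an arbitrary $a$ with $u\leqslant a<x\vee y$ and analyse the two cases. If $x\not\leqslant a$, then Lemma \ref{Lem: majoration mu A star} applies with the Lemma's $w$ set to $a$, its $x$ set to our $x$, its $u$ to $u$, and its $t$ to $x\vee y$: the hypotheses $u\leqslant x\wedge a$ and $x\vee a\leqslant x\vee y$ are immediate since $u\leqslant x$, $u\leqslant a$, and $a\leqslant x\vee y$. The conclusion yields $\mu_A(u,x)\leqslant\mu_{\max}(a,x\vee y)$. Symmetrically, if $y\not\leqslant a$ we get $\mu_A(u,y)\leqslant\mu_{\max}(a,x\vee y)$. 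In either case $\inf\{\mu_A(u,x),\mu_A(u,y)\}\leqslant\mu_{\max}(a,x\vee y)$, and taking the infimum over $a$ gives the claim of (a).

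For part (b), I would distinguish the two hypotheses. If $\mu_A(u,x)$ and $\mu_A(u,y)$ are comparable, then $\inf\{\mu_A(u,x),\mu_A(u,y)\}$ coincides with one of the two values, so (a) delivers the desired one-sided inequality. If on the other hand the infimum defining $\mu_A(u,x\vee y)$ is attained at some $a_0$ with $u\leqslant a_0<x\vee y$ and $\mu_A(u,x\vee y)=\mu_{\max}(a_0,x\vee y)$, then applying the same case analysis directly to $a_0$ yields either $\mu_A(u,x)\leqslant\mu_A(u,x\vee y)$ or $\mu_A(u,y)\leqslant\mu_A(u,x\vee y)$, which is the statement of (b).

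There is no substantial obstacle here: the argument is essentially a two-case application of Lemma \ref{Lem: majoration mu A star}, and the only subtlety is to notice that the partition of the range of $a$ according to whether $x\leqslant a$ or $y\leqslant a$ is exhaustive precisely because $a<x\vee y$. The comparability/attained hypothesis in (b) is needed only to pass from the infimum inequality of (a) to a statement about one fixed term, which is standard whenever infima are not automatically taking values.
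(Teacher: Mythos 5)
Your argument is correct and coincides with the paper's own proof: the same dichotomy ($a<x\vee y$ forces $x\not\leqslant a$ or $y\not\leqslant a$), the same application of Lemma \ref{Lem: majoration mu A star} with $t=x\vee y$, and the same two-case treatment of (b). Nothing to add.
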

\begin{proof}
Let $w$ be an element of $\mathscr L$ such that $u\leqslant w<x\vee y$. Since $w<x\vee y$, either $x\not\leqslant w$, or $y\not\leqslant w$. In the case where $x\not\leqslant w$, since $x\vee w\leqslant x\vee y$ and $u\leqslant x\wedge w$, by Lemma \ref{Lem: majoration mu A star} we obtain 
\[\mu_A(u,x)\leqslant\mu_{\max}(w,x\vee y).\]
Similarly, in the case where $y\not\leqslant w$, one has 
\[\mu_A(u,y)\leqslant \mu_{\max}(w,x\vee y).\]

(a) The above argument shows that, either $\mu_A(u,x)\leq \mu_{\max}(w,x\vee y)$ or $\mu_A(u,y)\leq \mu_{\max}(w,x\vee y)$. Hence the inequality \[\mu_{\max}(w,x\vee y)\geqslant\inf\{\mu_A(u,x),\mu_A(u,y)\}\]
holds, which leads to
\[\mu_A(u,x\vee y)\geqslant \inf\{\mu_A(u,x),\mu_A(u,y)\}\] by taking the infimum with respect to $w$.

(b) If $\mu_A(u,x)$ and $\mu_A(u,y)$ are comparable, then $\inf\{\mu_A(u,x),\mu_A(u,y)\}$ is either equal to $\mu_A(u,x)$ or equal to $\mu_A(u,y)$. Hence the statement of (a) implies that, either $\mu_A(u,x\vee y)\geqslant\mu_A(u,x)$ or $\mu_A(u,x\vee y)\geqslant\mu_A(u,y)$. 

In the following, we assume that there exists $a\in\mathscr L$ such that $u\leqslant a<x\vee y$ and that $\mu_A(u,x\vee y)=\mu_{\max}(a,x\vee y)$. By the argument in the beginning of the proof, we obtain that, either $\mu_A(u,x)\leqslant\mu_{\max}(a,x\vee y)=\mu_A(u,x\vee y)$, or $\mu_A(u,y)\leqslant\mu_{\max}(a,x\vee y)=\mu_A(u,x\vee y)$. The statement is thus proved.

\end{proof}

\section{Semi-stability and Harder-Narasimhan filtration}

In this section, we introduce the notion of semi-stability and define the Harder-Narasimhan filtration of Harder-Narasimhan games. As in the previous section, we assume that $(\mathscr L,\leqslant)$ is a bounded lattice, and we fix a complete lattice $(S,\leqslant)$ together with a map $\mu:P_{<}(\mathscr L)\rightarrow S$ which  is assumed to be convex (see Definition \ref{Def: convex pay-off}).

\subsection{Semi-stability}

\begin{defi}
We say that the ordered set $(\mathscr L,\leqslant)$ satisfies the \emph{ascending chain condition} if every non-empty subset of $\mathscr L$ admits a maximal element with respect to $\leqslant$, or equivalently, any strictly ascending chain of elements of $\mathscr L$ is finite. 

We say that $(\mathscr L,\leqslant)$ satisfies the \emph{$\mu_A$-descending chain condition} if, for any $a\in\mathscr L$, there does not exist any infinite descending chain
\[x_0> x_1>\ldots> x_n> x_{n+1}>\ldots\]
of elements of $\mathscr L$ that are  $>a$, such that 
\[\mu_A(a,x_0)<\mu_A(a,x_1)<\ldots<\mu_A(a,x_n)<\mu_A(a,x_{n+1})<\ldots.\] Clearly, if $(\mathscr L,\leqslant)$ satisfies the $\mu_A$-descending chain condition, then, for any $x\in\mathscr L\setminus\{\top\}$, $(\mathscr L_{[x,\top]},\leqslant)$ satisfies the $\mu_{[x,\top],A}$-descending chain condition.
\end{defi}

The following proposition helps to describe a typical situation where the $\mu_A$-descending chain condition is satisfied (compare with Remark \ref{Rem: strong descending chain condition}). We denote  the greatest element of $(S,\leqslant)$ as $+\infty$.

\begin{prop}\label{Pro: mu A descending chain} 
Let $(x,z)$ be a pair in $P_{<}(\mathscr L)$. If $\mu_A(x,z)=+\infty$, or equivalently, for any $w\in\mathscr L$ such that $x\leqslant w<z$ one has $\mu_{\max}(w,z)=+\infty$, then, for any $a\in\mathscr L$ such that $a<x$, one has $\mu_A(a,x)\leqslant\mu_A(a,z)$. 
\end{prop}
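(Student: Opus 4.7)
The result is a short corollary of Proposition \ref{Pro: suite exacte et mu A}(a) together with the elementary observation that $+\infty$ is the top element of $(S,\leqslant)$. My plan is simply to invoke Proposition \ref{Pro: suite exacte et mu A}(a) applied to the triple $a<x<z$ of elements of $\mathscr L$; this is legal because the pay-off function $\mu$ is assumed convex throughout this section and $a<x$ by assumption while $x<z$ because $(x,z)\in P_{<}(\mathscr L)$. This gives
\[\mu_A(a,z)\;\geqslant\;\inf\{\mu_A(a,x),\mu_A(x,z)\}.\]
Since by hypothesis $\mu_A(x,z)=+\infty$, the infimum on the right equals $\mu_A(a,x)$, which is the desired inequality.

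Before executing this I would first verify the ``equivalent'' condition stated in the proposition, namely that $\mu_A(x,z)=+\infty$ is the same as $\mu_{\max}(w,z)=+\infty$ for every $w\in\mathscr L$ with $x\leqslant w<z$. By definition $\mu_A(x,z)=\inf_{x\leqslant w<z}\mu_{\max}(w,z)$, and since $+\infty$ is the greatest element of the complete lattice $(S,\leqslant)$, this infimum equals $+\infty$ if and only if each term equals $+\infty$. This is a one-line check that justifies the ``or equivalently'' in the statement.

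As a sanity check, I would also sketch a direct proof bypassing Proposition \ref{Pro: suite exacte et mu A}, in case the convexity invocation there should be made explicit. Fix $b\in\mathscr L$ with $a\leqslant b<z$. If $x\leqslant b$, then $x\leqslant b<z$ and the hypothesis gives $\mu_{\max}(b,z)=+\infty\geqslant\mu_A(a,x)$. Otherwise $x\not\leqslant b$, so $a\leqslant x\wedge b$ and $x\vee b\leqslant z$ hold (using $a\leqslant x,b$ and $x,b\leqslant z$), and Lemma \ref{Lem: majoration mu A star} applied with $u=a$, $w=b$ and $t=z$ yields $\mu_A(a,x)\leqslant\mu_{\max}(b,z)$. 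In either case $\mu_A(a,x)\leqslant\mu_{\max}(b,z)$, and taking the infimum over $b$ gives $\mu_A(a,x)\leqslant\mu_A(a,z)$.

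There is no real obstacle here: the content of the proposition is that ``infinite slope on $[x,z]$'' acts as a neutral element for the inf appearing in the convexity inequality of Proposition \ref{Pro: suite exacte et mu A}(a). The only minor care needed is the case split on whether $b$ is comparable to $x$ (if one chooses the direct route via Lemma \ref{Lem: majoration mu A star}), which ensures that the hypothesis $\mu_{\max}(w,z)=+\infty$ for $x\leqslant w<z$ is actually used.
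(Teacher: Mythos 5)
Your proposal is correct and follows exactly the paper's own argument: apply Proposition \ref{Pro: suite exacte et mu A}(a) to $a<x<z$ and note that $\mu_A(x,z)=+\infty$ forces the infimum to equal $\mu_A(a,x)$. The extra verification of the ``equivalently'' clause and the direct route via Lemma \ref{Lem: majoration mu A star} are sound but not needed.
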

\begin{proof} Since $\mu_A(x,z)=+\infty$, for any $a\in\mathscr L$ such that $a<x$, one has 
\[\inf\{\mu_A(a,x),\mu_A(x,z)\}=\mu_A(a,x).\]
By Proposition \ref{Pro: suite exacte et mu A} (a), we obtain $\mu_A(a,z)\geqslant\mu_A(a,x)$.
\end{proof}

\begin{coro}\label{Cor: strong muA descending}
If, for any descending chain 
\[x_0>x_1>\ldots>x_n>x_{n+1}>\ldots\] 
of elements of $\mathscr L$, there exists $N\in\mathbb N$ such that 
$\mu_A(x_{N+1},x_N)=+\infty$,
then $(\mathscr L,\leqslant)$ satisfies the $\mu_A$-descending chain condition.
\end{coro}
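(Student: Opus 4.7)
The plan is to proceed by contradiction, combining the hypothesis with Proposition \ref{Pro: mu A descending chain}. Suppose that $(\mathscr L,\leqslant)$ does \emph{not} satisfy the $\mu_A$-descending chain condition. Then there exists some $a\in\mathscr L$ and an infinite strictly descending chain
\[x_0>x_1>\ldots>x_n>x_{n+1}>\ldots\]
of elements of $\mathscr L$, all strictly greater than $a$, such that
\[\mu_A(a,x_0)<\mu_A(a,x_1)<\ldots<\mu_A(a,x_n)<\mu_A(a,x_{n+1})<\ldots.\]

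Next, I would apply the standing assumption of the corollary directly to this chain: there exists some $N\in\mathbb N$ with $\mu_A(x_{N+1},x_N)=+\infty$. Since $a<x_{N+1}<x_N$, the pair $(x_{N+1},x_N)$ lies in $P_{<}(\mathscr L)$ and $a$ lies strictly below $x_{N+1}$, so Proposition \ref{Pro: mu A descending chain} applies with $x=x_{N+1}$, $z=x_N$ and the given $a$. It yields
\[\mu_A(a,x_{N+1})\leqslant\mu_A(a,x_N),\]
which contradicts the strict inequality $\mu_A(a,x_N)<\mu_A(a,x_{N+1})$ guaranteed by the assumed chain.

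There is no real obstacle here: the work has been done in Proposition \ref{Pro: mu A descending chain}, which already shows that an infinite value of $\mu_A$ on a consecutive pair forces $\mu_A(a,\cdot)$ to be non-strictly-increasing across that step. The only mild subtlety is to make sure to invoke the proposition with the right triple $a<x_{N+1}<x_N$ (rather than trying to pass to the endpoints $x_0,x_\infty$ of the whole chain), but since the hypothesis of the corollary furnishes an index $N$ with $\mu_A(x_{N+1},x_N)=+\infty$, this is immediate. Thus the assumed chain cannot exist, and $(\mathscr L,\leqslant)$ satisfies the $\mu_A$-descending chain condition.
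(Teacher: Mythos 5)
Your argument is correct and is exactly the intended one: the paper's proof is simply the phrase ``direct consequence of Proposition \ref{Pro: mu A descending chain},'' and your write-up spells out that deduction, applying the proposition to the triple $a<x_{N+1}<x_N$ to contradict the assumed strict increase of $\mu_A(a,\cdot)$ along the chain. No gaps.
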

\begin{proof}
This is a direct consequence of  Proposition \ref{Pro: mu A descending chain}.
\end{proof}

\begin{prop}\phantomsection\label{Pro: destabilizing element} Assume that  $(\mathscr L,\leqslant)$ satisfies the ascending chain condition and the $\mu_A$-descending chain condition. There exists an element $x\in\mathscr L\setminus\{\perp\}$ that satisfies the following conditions: 
\begin{enumerate}[label=\rm(S\arabic*)]
\item For any $y\in\mathscr L\setminus\{\perp\}$, $\mu_A(y)\not>\mu_A(x)$.
\item For any $y\in\mathscr L\setminus\{\perp\}$, if $\mu_A(y)=\mu_A(x)$, then $y\leqslant x$.
\end{enumerate}
\end{prop}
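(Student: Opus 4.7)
The construction will proceed in two stages: first I will exhibit an element $x_{\ast}$ satisfying only (S1), by forcing any hypothetical counterexample to generate a $\mu_A$-increasing strict descending chain (contradicting the $\mu_A$-descending chain condition); then I will upgrade to (S2) by selecting the $\leqslant$-largest element sharing the $\mu_A$-value of $x_{\ast}$.

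\emph{Stage 1 (producing (S1)).} Using the ascending chain condition, let $x_{0}$ be a maximal element of $\Lambda_{0} := \{y \in \mathscr{L} \setminus \{\perp\} : \mu_A(y) \geqslant \mu_A(\top)\}$, which contains $\top$. If $x_{0}$ fails (S1), pick $y_{1} \in \mathscr{L} \setminus \{\perp\}$ with $\mu_A(y_{1}) > \mu_A(x_{0})$; these values are comparable, so Proposition~\ref{Pro: mu A star inequality}(b) applied with $u = \perp$ gives $\mu_A(x_{0} \vee y_{1}) \geqslant \mu_A(x_{0})$, placing $x_{0} \vee y_{1}$ in $\Lambda_{0}$ and hence, by maximality of $x_{0}$, forcing $y_{1} \leqslant x_{0}$. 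Choose $x_{1}$ maximal in $\Lambda_{1} := \{y \in \mathscr{L} \setminus \{\perp\} : y \leqslant x_{0},\; \mu_A(y) \geqslant \mu_A(y_{1})\}$ (nonempty since $y_{1} \in \Lambda_{1}$), which satisfies $x_{1} < x_{0}$ and $\mu_A(x_{1}) > \mu_A(x_{0})$. Iterating, if $x_{n}$ fails (S1) pick $y_{n+1}$ with $\mu_A(y_{n+1}) > \mu_A(x_{n}) > \cdots > \mu_A(x_{0})$, and by induction on $k$ from $0$ to $n$ I will show $y_{n+1} \leqslant x_{k}$; the inductive step applies Proposition~\ref{Pro: mu A star inequality}(b) to $y_{n+1}$ and $x_{k+1}$, uses that $y_{n+1} \vee x_{k+1} \leqslant x_{k}$, and invokes the maximality of $x_{k+1}$ in $\Lambda_{k+1}$. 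Taking $k = n$ yields $y_{n+1} \leqslant x_{n}$, so $x_{n+1}$ can be chosen maximal in $\Lambda_{n+1}$, with $x_{n+1} < x_{n}$ and $\mu_A(x_{n+1}) > \mu_A(x_{n})$. A non-terminating iteration produces a strict chain $x_{0} > x_{1} > \cdots > \perp$ with strictly increasing $\mu_A(\perp, x_{n})$, contradicting the $\mu_A$-descending chain condition at $a = \perp$; therefore some $x_{n}$ satisfies (S1).

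\emph{Stage 2 (upgrading to (S2)).} Let $x_{\ast}$ satisfy (S1) and define $\mathscr{M} := \{y \in \mathscr{L} \setminus \{\perp\} : \mu_A(y) = \mu_A(x_{\ast})\}$, which contains $x_{\ast}$. By the ascending chain condition, $\mathscr{M}$ admits a maximal element $x$, and $x$ inherits (S1) from $x_{\ast}$ since $\mu_A(x) = \mu_A(x_{\ast})$. For (S2), take $y \in \mathscr{L} \setminus \{\perp\}$ with $\mu_A(y) = \mu_A(x)$; Proposition~\ref{Pro: mu A star inequality}(b) gives $\mu_A(x \vee y) \geqslant \mu_A(x)$, while (S1) gives $\mu_A(x \vee y) \not> \mu_A(x)$, and the conjunction in a partial order forces the equality $\mu_A(x \vee y) = \mu_A(x)$. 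Thus $x \vee y \in \mathscr{M}$, and maximality of $x$ yields $x \vee y = x$, i.e.\ $y \leqslant x$.

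\emph{Main obstacle.} The delicate point is Stage 1, specifically arranging that a destabilizing $y_{n+1}$ sits below the \emph{current} $x_{n}$ rather than merely below the initial $x_{0}$. This forces me to nest the constraint ``$y \leqslant x_{k-1}$'' into each $\Lambda_{k}$ and to propagate Proposition~\ref{Pro: mu A star inequality}(b) through the successive joins $y_{n+1} \vee x_{k}$; without this inductive transfer the produced chain need not be descending, and the $\mu_A$-descending chain condition could not be invoked to close the argument.
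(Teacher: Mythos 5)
Your proof is correct and follows essentially the same strategy as the paper's: both construct a strictly decreasing chain of maximal destabilizing elements, use Proposition~\ref{Pro: mu A star inequality} on joins to force each new destabilizer below the current element, and invoke the $\mu_A$-descending chain condition at $a=\perp$ to terminate. The only difference is organizational --- you secure (S1) first and then upgrade to (S2) via a second maximality argument, whereas the paper extracts both conditions at once from the last term of its chain.
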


\begin{proof}
We first construct  a strictly decreasing sequence 
\[\top=x_0>x_1>\ldots>x_n\operatorname{=}\perp, \quad n\in\mathbb N_{\geqslant 1}\]
in a recursive way, 
which satisfies the following conditions (where the second condition is ensured by the ascending chain condition):
\begin{enumerate}[label=\rm(\arabic*)]
\item For any $i\in\mathbb N$ such that $1\leqslant i<n$ one has $\mu_{A}(x_{i})>\mu_A(x_{i-1})$.  

\item\label{Item: mumin yi smaller mu min xi} For any $i\in\mathbb N$ such that $1\leqslant i<n $ and any $y_i\in\mathscr L$ such that $x_{i}<y_i\leqslant x_{i-1}$, one has $\mu_{A}(y_i)\not\geqslant\mu_{A}(x_{i})$.

\item For any $y\in\mathscr L$ such that $\operatorname{\perp}<y\leqslant x_{n-1}$, one has $\mu_A(y)\not>\mu_A(x_{n-1})$.
\end{enumerate} 
We proceed as follows: first, we consider the subset \[\mathscr L_1 := \{y \in \mathscr L \setminus \{ \perp \} \mid \mu_A(y)>\mu_A(\top)\},\] which is either empty or admits a maximal element (as $(\mathscr L,\leqslant)$ satisfies the ascending chain condition). 
\begin{itemize}
\item If $\mathscr L_1 $ is empty, then one has $x_1 = \operatorname{\perp}$ in the aforementioned sequence (which is therefore of trivial shape $\top=x_0>\operatorname{\perp}=x_1$), so that only the condition (3) makes sense here and is indeed satisfied (by construction);
\item If $\mathscr L_1 $ is not empty, let $x_1$ be one of its maximal elements. One has $\top = x_0 >x_1>\operatorname{\perp}$, and by the definition of $x_1$, the inequality $\mu_{A}(x_{1})>\mu_A(x_{0})$ is  satisfied. The condition \ref{Item: mumin yi smaller mu min xi} is also satisfied for $i=1$,  by the maximality of $x_1$. Then one considers \[\mathscr L_2 := \{y \in \mathscr L \setminus \{ \perp \} \mid \mu_A(y)>\mu_A(x_1)\}\] and iterates the above reasoning. Note that the process terminates as $(\mathscr L,\leqslant)$ satisfies the $\mu_A$-descending chain condition.
\end{itemize} 

We can now show, by induction on the length of the above sequence, the existence of an element $x \in \mathscr L$ that satisfies conditions (S1) and (S2).
In the case where $n=1$, by the point (3) above, one has  
\[\forall\,y\in\mathscr L\setminus\{\perp\},\quad \mu_{A}(y)\not>\mu_{A}(\top).\]
Hence $x_0=\top$ satisfies the condition (S1), while the condition (S2) is trivially satisfied.

Suppose that $n>1$. Let $y$ be an element of $\mathscr L\setminus\{\perp\}$. We  first show by contradiction that, for any $i\in\{1,\ldots,n-1\}$, if $y<x_{i-1}$ and if $\mu_{A}(y)\geqslant\mu_{A}(x_{i})$, then $y\leqslant x_{i}$. In fact, if $y\not\leqslant x_{i}$, then one has 
\[x_{i}<y\vee x_{i}\leqslant x_{i-1}.\]
By Proposition \ref{Pro: mu A star inequality}, we obtain 
\[\mu_A(y\vee x_{i})\geqslant\inf\{\mu_A(y),\mu_{A}(x_{i})\}=\mu_A(x_{i}),\]
which contradicts the point (2) above. Now, if $y$ is an element of $\mathscr L\setminus\{\perp\} $ such that $\mu_A(y)\geqslant\mu_{A}(x_{n-1})$, then by the above statement, we can show by induction on $i$ that $y\leqslant x_{i}$ for any $i\in\{0,\ldots,n-1\}$. The case where $i=0$ is trivial. Assume that $y\leqslant x_{i-1}$ with $i\in\{1,\ldots,n-1\}$. Since
\[\mu_A(y)\geqslant\mu_A(x_{n-1})>\mu_A(x_{i-1}),\]
one has $y<x_{i-1}$. Hence the condition \[\mu_A(y)\geqslant\mu_A(x_{n-1})\geqslant\mu_A(x_i)\] leads to $y\leqslant x_i$.

The above induction argument shows that $y\leqslant x_{n-1}$. By the condition (3) one has $\mu_A(y)\not>\mu_A(x_{n-1})$. Therefore, $x_{n-1}$ satisfies the conditions (S1) and (S2). 
\end{proof}

\begin{rema}\phantomsection\label{Rem: uniqueness of destabilizing element}
Assume that $(S,\leqslant)$ is a totally ordered set. Then there is at most one element element $x\in\mathscr L\setminus\{\perp\}$ which satisfies the conditions (S1) and (S2) in the above proposition. In fact, if $x$ and $x'$ are two elements of $\mathscr L$ that satisfy the conditions (S1) and (S2), then by the condition (S1) we obtains $\mu_A(x)\leqslant \mu_A(x')$ and $\mu_A(x')\leqslant\mu_A(x)$. This implies the equality $\mu_A(x)=\mu_A(x')$. Thus by the condition (S2) we obtain $x=x'$.
\end{rema}

\begin{defi}\label{Def: semistability game}
We denote by $\operatorname{St}(\mu)$ the set of elements $x\in\mathscr L\setminus\{\perp\}$ that satisfy the conditions (S1) and (S2) of Proposition \ref{Pro: destabilizing element}. If $\top\in\operatorname{St}(\mu)$, or equivalently, if for any $x\in\mathscr L$, $\mu_A(x)\not>\mu_A(\top)$,  we say that the Harder-Narasimhan game with pay-off function $\mu$ is \emph{semi-stable}, or simply that the pay-off function $\mu$ is \emph{semi-stable}. By Remark \ref{Rem: uniqueness of destabilizing element}, in the case where $(S,\leqslant)$ is a totally ordered set, this is equivalent to requiring that $\operatorname{St}(\mu)=\{\top\}$.
 
If the Harder-Narasimhan game with pay-off function $\mu$ is semi-stable and, moreover, for any $x\in\mathscr L$, $\mu_A(x)\neq \mu_A(\top)$, then we say that the Harder-Narasimhan game with pay-off function $\mu$ is \emph{stable}, or simply that the pay-off function $\mu$ is \emph{stable}. 
\end{defi}


\begin{prop}\phantomsection\label{Pro: troncation}
Let $x$ be an element of $\operatorname{St}(\mu)\setminus\{\top\}$.  Then the following assertions hold.
\begin{enumerate}[label=\rm(\arabic*)]
\item The function $\mu_{[\perp,x]}$ is semi-stable.
\item For any $y\in\mathscr L$ such that $y>x$, one has $\mu_A(x)\not\leqslant\mu_A(x,y)$. 
\end{enumerate}
\end{prop}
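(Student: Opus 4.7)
For both parts, the plan is to read off the conclusion directly from the characterizing conditions (S1) and (S2) that define membership in $\operatorname{St}(\mu)$, combined with the comparison inequalities proven in Section~2.

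For part~(1), the key observation is that for any $y \in \mathscr L_{[\perp,x]}\setminus\{\perp\}$, the restricted optimal threshold $\mu_{[\perp,x],A}(y)$ coincides with $\mu_A(y) = \mu_A(\perp, y)$: unwinding the defining formula \eqref{Equ: mu A def}, the inf-sup in question only ranges over pairs $(a,b)$ with $\perp \leqslant a < b \leqslant y \leqslant x$, and such pairs lie in $P_<(\mathscr L_{[\perp,x]})$ exactly as in $P_<(\mathscr L)$. Once this identification is made, condition~(S1) satisfied by $x \in \operatorname{St}(\mu)$ says precisely that $\mu_A(y) \not> \mu_A(x)$ for every $y \in \mathscr L \setminus \{\perp\}$, so its restriction to $y \in \mathscr L_{[\perp,x]} \setminus \{\perp\}$ yields the semi-stability of $\mu_{[\perp,x]}$ in the sense of Definition~\ref{Def: semistability game}. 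The analogue of (S2) for the restricted game is automatic, since $y \in \mathscr L_{[\perp,x]}$ already entails $y \leqslant x$.

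For part~(2), I would argue by contradiction, assuming $\mu_A(x) \leqslant \mu_A(x,y)$. This comparability gives $\inf\{\mu_A(x), \mu_A(x,y)\} = \mu_A(x)$, and so Proposition~\ref{Pro: suite exacte et mu A}~(a) applied to the chain $\perp < x < y$ yields
\[
\mu_A(y) = \mu_A(\perp, y) \geqslant \inf\{\mu_A(x), \mu_A(x,y)\} = \mu_A(x).
\]
On the other hand, since $y > x \geqslant \perp$ we have $y \neq \perp$, so condition~(S1) for $x$ gives $\mu_A(y) \not> \mu_A(x)$. In any poset, the combination $\mu_A(y) \geqslant \mu_A(x)$ with $\mu_A(y) \not> \mu_A(x)$ forces the equality $\mu_A(y) = \mu_A(x)$; condition~(S2) then imposes $y \leqslant x$, contradicting the hypothesis $y > x$.

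I do not expect any serious obstacle here: the only subtlety is the bookkeeping that $\mu_{[\perp,x],A}(y) = \mu_A(y)$ throughout the sub-interval, after which both assertions reduce to one-line applications of (S1), (S2), and Proposition~\ref{Pro: suite exacte et mu A}~(a). No total-order assumption on $S$ is needed, since the clinching step uses only the tautology that $a \geqslant b$ together with $a \not> b$ implies $a = b$.
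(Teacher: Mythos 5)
Your proof is correct and follows essentially the same route as the paper: part (1) rests on the observation that $\mu_{[\perp,x],A}(z)=\mu_A(z)$ for $z\leqslant x$ so that (S1) restricts and (S2) becomes vacuous, and part (2) is the same contradiction argument via Proposition \ref{Pro: suite exacte et mu A} combined with (S1) and (S2). Your write-up merely makes explicit the step $\mu_A(y)\geqslant\mu_A(x)$ and $\mu_A(y)\not>\mu_A(x)$ $\Rightarrow$ $\mu_A(y)=\mu_A(x)$ that the paper leaves implicit.
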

\begin{proof}

(1) Let $\nu=\mu_{[\perp,x]}$. For any $z\in\mathscr L\setminus\{\perp\}$ such that $z\leqslant x$, one has 
$\nu_A(z)=\mu_A(z)$.
Hence, for any $z\in\mathscr L_{[\perp,x]}$, one has $\nu_A(z)\not>\nu_A(x)$, and $\nu_A(z)=\nu_A(x)$ implies that $z\leqslant x$. Therefore $x\in\operatorname{St}(\nu)$.

(2) If $\mu_A(x)\leqslant\mu_A(x,y)$, then by Proposition \ref{Pro: suite exacte et mu A}, one obtains $\mu_A(y)\geqslant\mu_A(x)$, which leads to $y\leqslant x$ since $x\in\operatorname{St}(\mu)$. This contradicts the condition $y>x$. 
\end{proof}

\begin{prop}\label{Pro: property of St}
If $(S,\leqslant)$ is a totally ordered set, or if the infimum 
\[\mu_A(z)=\inf_{\begin{subarray}{c}
a\in\mathscr L\\
\perp\leqslant a<z
\end{subarray}}\mu_{\max}(a,z)\]
is attained for any $z\in\mathscr L\setminus\{\perp\}$, then the following statements hold.
\begin{enumerate}[label=\rm(\arabic*)]
\item The subset $\operatorname{St}(\mu)$ of $(\mathscr L,\leqslant)$ is totally ordered, and hence admits a greatest element if $(\mathscr L,\leqslant)$ satisfies the ascending chain condition.

\item If $x$ is an element of $\operatorname{St}(\mu)$ and  $y$ is an element of $\mathscr L$ such that $x<y$, then
 $\mu_A(y)=\mu_A(x,y)$.  
\end{enumerate}
\end{prop}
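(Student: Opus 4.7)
The plan is to treat the two assertions separately, reducing each one to a structural result from Section~2 applied to a carefully chosen test element.

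For (1), I take $x,x'\in\operatorname{St}(\mu)$ and aim to show they are comparable in $(\mathscr L,\leqslant)$. When $(S,\leqslant)$ is totally ordered, Remark~\ref{Rem: uniqueness of destabilizing element} already forces $|\operatorname{St}(\mu)|\leqslant 1$, so the claim is vacuous. In the remaining case, where the infimum $\mu_A(z)$ is attained for every $z\in\mathscr L\setminus\{\perp\}$, I apply Proposition~\ref{Pro: mu A star inequality}~(b) with $u=\perp$ to the pair $(x,x')$: since $\mu_A(x\vee x')$ is attained by hypothesis, either $\mu_A(x\vee x')\geqslant\mu_A(x)$ or $\mu_A(x\vee x')\geqslant\mu_A(x')$. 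Assume by symmetry the former. Since $x\neq\perp$, one has $x\vee x'\in\mathscr L\setminus\{\perp\}$, so condition~(S1) applied to $x$ rules out strict inequality, yielding $\mu_A(x\vee x')=\mu_A(x)$. Condition~(S2) for $x$ then gives $x\vee x'\leqslant x$, i.e.\ $x'\leqslant x$. Hence $\operatorname{St}(\mu)$ is totally ordered, and when the ascending chain condition holds on $\mathscr L$ any non-empty totally ordered subset admits a greatest element.

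For (2), I apply Proposition~\ref{Pro: suite exacte et mu A}~(c) to the chain $\perp<x<y$; our running hypothesis on $(S,\leqslant)$ or on attained infima is precisely what is needed there. The conclusion is a dichotomy: either $\mu_A(x,y)=\mu_A(\perp,y)=\mu_A(y)$, which is the desired equality, or $\mu_A(x)\leqslant\mu_A(y)<\mu_A(x,y)$. In the second case $\mu_A(y)\geqslant\mu_A(x)$, and since $y>x\geqslant\perp$ lies in $\mathscr L\setminus\{\perp\}$, condition~(S1) for $x$ gives $\mu_A(y)\not>\mu_A(x)$; combined, this forces $\mu_A(y)=\mu_A(x)$. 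Then (S2) applied to $x$ yields $y\leqslant x$, contradicting $y>x$. Only the first alternative survives.

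I do not expect a serious obstacle: both assertions collapse onto the Section~2 propositions, and in each part the two hypothetical cases (totally ordered $S$ versus attained infimum) plug directly into the hypotheses of those propositions. The only bookkeeping point is, in part~(1), to verify that $x\vee x'$ lies in $\mathscr L\setminus\{\perp\}$ so that (S1) and (S2) may be invoked against it, and this follows immediately from $x\neq\perp$.
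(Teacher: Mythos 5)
Your proposal is correct and follows essentially the same route as the paper: part (1) via Proposition \ref{Pro: mu A star inequality}~(b) combined with (S1) and (S2), and part (2) via the dichotomy of Proposition \ref{Pro: suite exacte et mu A}~(c) with the second branch excluded by (S1) and (S2). The only cosmetic difference is that you dispose of the totally ordered case of (1) separately through Remark \ref{Rem: uniqueness of destabilizing element}, whereas the paper treats both hypotheses uniformly since comparability of $\mu_A(x)$ and $\mu_A(x')$ already puts the totally ordered case within the scope of Proposition \ref{Pro: mu A star inequality}~(b).
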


\begin{proof}
(1) Let $x$ and $x'$ be two elements of $\operatorname{St}(\mu)$. By Proposition \ref{Pro: mu A star inequality} (b), we obtain that, either $\mu_A(x\vee x')\geqslant\mu_A(x)$, or $\mu_A(x\vee x')\geqslant\mu_A(x')$. By the condition (S1) we deduce that, either $\mu_A(x\vee x')=\mu_A(x)$, or $\mu_A(x\vee x')=\mu_A(x')$. Hence the condition (S2) leads to, either $x\vee x'\leqslant x$, or $x\vee x'\leqslant x'$, that is, $x'\leqslant x$ or $x\leqslant x'$.

(2) By Proposition \ref{Pro: suite exacte et mu A} (c), either $\mu_A(x,y)=\mu_A(y)$, or $\mu_A(x)\leqslant\mu_A(y)<\mu_A(x,y)$. However, the latter condition cannot hold since otherwise, $\mu_A(y)=\mu_A(x)$ by the condition (S1), and further, $y\leqslant x$ by the condition (S2). Therefore $\mu_A(x,y)=\mu_A(y)$.
\end{proof}

\subsection{Illustration by weighted directed graphs}\label{Sec: wdg} 
One can illustrate the pay-off function of a Harder-Narasimhan game as an edge-weighted directed graph. Let $(\mathscr L,\leqslant)$ be a bounded poset, $(S,\leqslant)$ a complete lattice, and $\mu:P_{<}(\mathscr L)\rightarrow S$ be a map. We can construct a weighted directed graph $G(\mathscr L,\mu)$ as follows. The vertices of $G(\mathscr L,\mu)$ are elements of $\mathscr L$. For any $(x,y)\in P_{<}(\mathscr L)$, one connects $x$ to $y$ by a directed edge with weight $\mu(x,y)$. For $(a,b)\in S^2$, we denote by $a\vee b$ the supremum of $\{a,b\}$ and by $a\wedge b$ the infimum of $\{a,b\}$.

The simplest non-trivial Harder-Narasimhan game can be illustrated by the following weighted directed graph, where $a$, $b$ and $c$ are elements of $S$. 
\begin{equation*}\xymatrix{\relax\perp\ar@/_1pc/[rr]_-{c}\ar[r]^-a&x\ar[r]^-b&\top}\end{equation*}
Note that
\[\mu_A^*:= \inf_{x\in\mathscr L\setminus\{\top\}}\sup_{\begin{subarray}{c}y\in\mathscr L\\
x<y\end{subarray}}\mu(x,y) =(a\vee c)\wedge b = \mu_A(\top),\]
\[ \mu_A(x)=a,\quad \mu_A(x,\top)=b.\] 
Therefore, the Harder-Narasimhan game is semi-stable if and only if 
\[\mu_A(x) := \inf_{x\in\mathscr L\setminus\{\top\}}\sup_{\begin{subarray}{c}y\in\mathscr L\\
x<y\end{subarray}}\mu(x,y) = a\not> (a\vee c)\wedge b =\mu_A(\top).\]
Note that, if $a\leqslant b$, then 
\[a\leqslant(a\vee c)\wedge b\]
and hence the Harder-Narasimhan game is semi-stable. If $a>b$, then \[(a\vee c)\wedge b=b\] and hence the Harder-Narasimhan game is not semi-stable and $\operatorname{St}(\mu)=\{x\}$.

Assume that $a$ and $b$ are not comparable, but $a\vee c\geqslant b$ (this happens for example when $c\geqslant b$). In this case one has $(a\vee c)\wedge b=b$. Therefore,  $\mu_A(\top)$ and $\mu_A(x)$ are not comparable, and hence $\operatorname{St}(\mu)=\{x,\top\}$. In this case the Harder-Narasimhan game is still semi-stable.

\subsection{Harder-Narasimhan filtration}

In this subsection, we consider the Harder-Narasimhan filtration of a Harder-Narasimhan game, which is a canonical filtration (in a certain sense) that measures the potential default of semi-stability of a Harder-Narasimhan game. We fix a bounded lattice $(\mathscr L,\leqslant)$ and a pay-off function $\mu$ on $P_{<}(\mathscr L)$ which takes value in a complete lattice $(S,\leqslant)$. We assume the following:
\begin{enumerate}[label=(\rm\alph*)]
\item The pay-off function $\mu$ is convex.
\item The bounded lattice $(\mathscr L,\leqslant)$ satisfies the ascending chain condition and the $\mu_A$-descending chain condition.
\item Either $(S,\leqslant)$ is totally ordered, or the infimum
\[\mu_A(x,y)=\inf_{\begin{subarray}{c}
a\in\mathscr L\\
x\leqslant a<y
\end{subarray}}\mu_{\max}(a,y)\]
is attained for any $(x,y)\in P_{<}(\mathscr L)$. 
\end{enumerate}

\begin{defi}\label{Def: HN filtration}
We construct an increasing sequence 
\[\operatorname{\perp}=a_0<a_1<\ldots<a_n=\top\]
in a recursive way, so that $a_i$ is the greatest element of the set $\operatorname{St}(\mu_{[a_{i-1},\top]})$ for any $i\in\{1,\ldots,n\}$ (the existence of $a_i$ is ensured by Proposition \ref{Pro: property of St}). Note that, by the ascending chain condition, this sequence has a finite length. We call it the \emph{Harder-Narasimhan filtration} of $\mu$. By Proposition \ref{Pro: troncation}, we obtain that each $\mu_{[a_{i-1},a_i]}$ is semi-stable, and \[\mu_A(a_0,a_1)\not\leqslant \mu_A(a_1,a_2)\not\leqslant\ldots\not\leqslant\mu_A(a_{n-1},a_n).\]
Moreover, by Proposition \ref{Pro: property of St}, for any $i\in\{0,\ldots,n-1\}$ and any $y\in\mathscr L$ such that $x_{i}<y$, one has 
\[\mu_A(\perp,y)=\mu_A(x_1,y)=\cdots=\mu_A(x_{i},y).\] 
\end{defi}

\begin{theo}\label{Thm: uniqueness of HN filtration}
Suppose that $(S,\leqslant)$ is totally ordered. Let 
\[\operatorname{\perp}=b_0<b_1<\ldots<b_m=\top\]
be a strictly increasing sequence in $\mathscr L$ such that, for any $j\in\{1,\ldots,m\}$, the restriction $\mu_{[b_{j-1},b_j]}$ is semi-stable, and  that 
\[\mu_A(b_0,b_1)>\ldots>\mu_A(b_{m-1},b_m).\]
Then this sequence identifies with the Harder-Narasimhan filtration of $\mu$.
\end{theo}
\begin{proof}
We denote by $\operatorname{\perp}=a_0<a_1<\ldots<a_n=\top$ the Harder-Narasimhan filtration of $\mu$, and we proceed by induction on $n$. Let $i\in\{1,\ldots,m\}$ be the smallest index such that $a_1\leqslant b_i$.
Since $a_1\not\leqslant b_{i-1}$, by Lemma \ref{Lem: majoration mu A star} one has 
\[\mu_A(a_1)\leqslant\mu_{A}(b_{i-1},a_1\vee b_{i-1}).\]
By the semi-stability of $\mu_{[b_{i-1},b_i]}$ and the hypothesis that $(S,\leqslant)$ is totally ordered, we deduce that 
\[\mu_A(a_1)\leqslant\mu_A(b_{i-1},b_i).\]
If $i>1$, then one has $\mu_A(a_1)< \mu_A(b_1)$, which leads to a contradiction (as by construction $a_1$ is the greatest element of $\operatorname{St}(\mu)$). Therefore we obtain $a_1\leqslant b_1$ and hence $\mu_A(a_1)\leqslant\mu_A(b_1)$ since $\mu_{[\perp,b_1]}$ is semi-stable. As $\operatorname{St}(\mu)=\{a_1\}$, we obtain $b_1\leqslant a_1$ and hence $a_1=b_1$. If $n\geqslant 2$, we can iterate the above argument to $\mu_{[a_i,\top]}$ successively for $i\in\{2,\ldots,n\}$ to show that $n=m$ and $a_i=b_i$ for any $i\in\{1,\ldots,n\}$. The theorem is thus proved.
\end{proof}

\subsection{Coprimary filtration}
\label{sec : coprimary filtration}
Let $R$ be a N\oe therian ring and $M$ be a non-zero $R$-module of finite type. Let $\mathscr L$ be the set of all sub-$R$-modules of $M$. The set $\mathscr L$, equipped with the order of inclusion $\subseteq$, forms a bounded lattice. Its least element is the zero $R$-module $\boldsymbol{0}$, and its greatest element is $M$. If $N_1$ and $N_2$ are two elements of $\mathscr L$, the least upper bound of $\{N_1,N_2\}$ is their sum, the greatest lower bound of $\{N_1,N_2\}$ is their intersection. 

We equip $\operatorname{Spec}(R)$ with an arbitrary total order $\leqslant$ which extends the partial order of inclusion $\subseteq$. Let $S_0$ be the set of finite subsets of $\operatorname{Spec}(R)$. We equip $S_0$ with a total order $\leqslant$ which extends the order of inclusion and such that, for any $(\mathfrak p,\mathfrak q)\in\operatorname{Spec}(R)\times\operatorname{Spec}(R)$, the relation $\{\mathfrak p\}\leqslant\{\mathfrak q\}$ holds if and only if $\mathfrak p\leqslant\mathfrak q$ in $(\operatorname{Spec}(R),\leqslant)$. Note that the lexicographic order is an example of such a total order on $S_0$. Denote by $(S,\leqslant)$ be the Dedekind-MacNeille completion of $(S_0,\leqslant)$. 


For any pair $(N',N)$ of sub-$R$-modules of $M$ such that $N'\subsetneq N$, we define \[\mu(N',N):=\Ass(N/N')\] as the set of associated prime ideals of the quotient module $N/N'$. Recall that a prime ideal $\mathfrak p$ of $R$ is said to be an associated prime ideal of $N/N'$ if there exists $s\in N$ such that 
\[\mathfrak p=\{a\in R\,:\,as\in N'\}.\]

Clearly, if $N''$ is a sub-$R$-module of $N$ that strictly contains  $N'$, then any associated prime ideal of $N''/N'$ is also an associated prime ideal of $N/N'$. Thus one has 
\[\mu_{\max}(N',N)=\mu(N',N).\]

\begin{prop}
The above map $\mu$ is convex as a pay-off function of a Harder-Narasimhan game.
\end{prop}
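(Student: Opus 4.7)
The plan is to use the second isomorphism theorem. Given any pair $(N_1,N_2)$ of sub-$R$-modules of $M$ with $N_1 \not\subseteq N_2$, the constraint guarantees that $N_1 \cap N_2 \subsetneq N_1$ and $N_2 \subsetneq N_1+N_2$, so both $\mu(N_1\cap N_2, N_1)$ and $\mu(N_2, N_1+N_2)$ are well-defined elements of $S_0 \subseteq S$.

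Next I would invoke the canonical $R$-linear map
\[ N_1 \longrightarrow (N_1+N_2)/N_2,\qquad s \longmapsto s + N_2, \]
which is surjective and whose kernel is exactly $N_1 \cap N_2$. This yields an isomorphism of $R$-modules
\[ N_1/(N_1\cap N_2) \xrightarrow{\;\sim\;} (N_1+N_2)/N_2. \]
Since the set of associated prime ideals of an $R$-module depends only on its isomorphism class, I would conclude that
\[ \operatorname{Ass}\bigl(N_1/(N_1\cap N_2)\bigr) = \operatorname{Ass}\bigl((N_1+N_2)/N_2\bigr), \]
that is, $\mu(N_1\cap N_2, N_1) = \mu(N_2, N_1+N_2)$ as elements of $S_0$, and hence \emph{a fortiori} as elements of the Dedekind-MacNeille completion $(S,\leqslant)$.

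Because the two values coincide, the inequality $\mu(N_1\cap N_2,N_1)\leqslant\mu(N_2,N_1+N_2)$ holds trivially, which is the convexity condition of Definition \ref{Def: convex pay-off}. There is no substantive obstacle here: the entire argument reduces to the standard isomorphism $N_1/(N_1\cap N_2)\cong(N_1+N_2)/N_2$ together with the isomorphism-invariance of $\operatorname{Ass}$, and in particular the choice of total order on $S_0$ plays no role in this step.
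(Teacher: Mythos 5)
Your proof is correct and coincides with the paper's own argument: both reduce convexity to the second isomorphism theorem $N_1/(N_1\cap N_2)\cong (N_1+N_2)/N_2$ and the isomorphism-invariance of $\operatorname{Ass}$, which gives equality (hence the required inequality) of the two pay-off values. The extra details you supply (surjectivity, kernel identification, passage from $S_0$ to its completion) are fine but do not change the substance.
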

\begin{proof}
Let $N_1$ and $N_2$ be two sub-$R$-modules of $M$ such that $N_1\not\subseteq N_2$. One has
\[N_1/(N_1\cap N_2)\cong (N_1+N_2)/N_2\]
as $R$-modules. Therefore we obtain
\[\mu(N_1\cap N_2,N_1)=\mu(N_2,N_1+N_2).\]
\end{proof}

\begin{prop}\label{Pro: muA N'N}
Let $N'$ and $N$ be two sub-$R$-modules of $M$ such that $N'\subsetneq N$, and $\mathfrak p$ be the least element of $\mu(N',N)$ in the totally ordered set $(\operatorname{Spec}(R),\leqslant)$. Then the equality $\mu_A(N',N)=\{\mathfrak p\}$ holds. 
\end{prop}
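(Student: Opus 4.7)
The plan is to unwrap the definition of $\mu_A(N',N)$ and identify the minimum of the resulting family. Using the equality $\mu_{\max}(P,N)=\operatorname{Ass}(N/P)$ established just above the proposition, one has
\[\mu_A(N',N)=\inf_{N'\subseteq P\subsetneq N}\operatorname{Ass}(N/P),\]
the infimum being taken in the complete lattice $S$. It therefore suffices to exhibit a submodule $P^*$ with $\operatorname{Ass}(N/P^*)=\{\mathfrak p\}$ (attainment) and to show that $\{\mathfrak p\}\leqslant\operatorname{Ass}(N/P)$ in $S_0$ for every admissible $P$ (lower bound).

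For attainment, I would invoke a primary decomposition of the zero submodule in the finitely generated Noetherian module $L:=N/N'$: write $0=Q_1\cap\cdots\cap Q_k$ with $Q_i$ a $\mathfrak p_i$-primary submodule of $L$ and $\{\mathfrak p_1,\ldots,\mathfrak p_k\}=\operatorname{Ass}(L)$. Choosing $i$ such that $\mathfrak p_i=\mathfrak p$ and taking $P^*$ to be the preimage of $Q_i$ under the quotient $N\twoheadrightarrow L$ yields $N'\subseteq P^*\subsetneq N$ and $N/P^*\cong L/Q_i$ a $\mathfrak p$-primary module, so $\operatorname{Ass}(N/P^*)=\{\mathfrak p\}$.

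For the lower bound, fix $P$ with $N'\subseteq P\subsetneq N$. Since $N/P$ is a quotient of $N/N'$, one has $\operatorname{Supp}(N/P)\subseteq\operatorname{Supp}(N/N')=V(\operatorname{ann}(N/N'))$, whose minimal primes are precisely the minimal elements (under inclusion) of $\operatorname{Ass}(N/N')$ because $R$ is Noetherian and $N/N'$ is finitely generated. Every $\mathfrak q\in\operatorname{Ass}(N/P)\subseteq\operatorname{Supp}(N/P)$ therefore contains some $\mathfrak p'\in\operatorname{Ass}(N/N')$; since $\leqslant$ on $\operatorname{Spec}(R)$ extends inclusion and $\mathfrak p$ is the least element of $\operatorname{Ass}(N/N')$, one obtains $\mathfrak q\geqslant\mathfrak p'\geqslant\mathfrak p$.

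It remains to transfer this elementwise inequality to $S_0$. If $\mathfrak p\in\operatorname{Ass}(N/P)$, then $\operatorname{Ass}(N/P)\supseteq\{\mathfrak p\}$, and the conclusion follows from the fact that $\leqslant$ on $S_0$ extends inclusion. Otherwise, picking any $\mathfrak q\in\operatorname{Ass}(N/P)$ (non-empty since $N/P\neq\boldsymbol 0$) gives $\mathfrak q>\mathfrak p$, hence $\{\mathfrak q\}>\{\mathfrak p\}$ in $S_0$ by the singleton compatibility, and $\operatorname{Ass}(N/P)\supseteq\{\mathfrak q\}$ combined with inclusion-extension and transitivity yields $\operatorname{Ass}(N/P)>\{\mathfrak p\}$. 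The main subtlety I anticipate is this final step in $S_0$: one cannot argue pointwise on the sets, but must combine the inclusion-extending and singleton-compatibility properties of the order on $S_0$ via a single transitivity argument.
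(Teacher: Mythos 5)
Your proof is correct and follows essentially the same strategy as the paper's: the lower bound via $\Ass(N/P)\subseteq\Supp(N/P)\subseteq\Supp(N/N')$ and minimal primes is identical, and you rightly make explicit the transfer of the elementwise inequality to the order on $S_0$, which the paper leaves implicit. The only variation is in the attainment step, where you take the preimage of the $\mathfrak p$-primary component of $\boldsymbol{0}$ in $N/N'$ while the paper takes the kernel of $N\rightarrow (N/N')_{\mathfrak p}$; since $\mathfrak p$ is minimal in $\Ass(N/N')$ these are two descriptions of the same submodule.
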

\begin{proof}
Recall that the support of an $R$-module $N$ is the set \[\Supp(N) := \{\mathfrak{q} \in \Spec(R) \mid N_{\mathfrak{q}} \neq 0\},\] where $N_{\mathfrak{q}}$ is the localisation at $\mathfrak{q}$. Let $N''$ be a sub-$R$-module of $N$ such that $N'\subseteq N''\subsetneq N$. Note that the support of $N/N''$ is contained in the support of $N/N'$. If $\mathfrak q$ is an associated prime ideal of $N/N''$, then it belongs to the support of $N/N'$ since \[\Ass(N/N'') \subseteq \Supp(N/N'')\subseteq \Supp(N/N').\] This follows from the fact that $R$ is N\oe therian (see \cite[chap. IV, \S1, no.3 Corollary 1]{MR782296}). In particular, there exists a minimal (under the relation of inclusion) prime ideal of $\operatorname{Supp}(N/N')$ which is contained in $\mathfrak q$. Hence, in the totally ordered set $(\Spec R,\leqslant)$, $\mathfrak q$ is greater or equal to $\mathfrak p$. Moreover, by \cite[chap. IV, \S1, no.2 Proposition 6]{MR782296}, if we take $N''$ as the kernel of the canonical $R$-module homomorphism
\[N\longrightarrow N/N'\longrightarrow (N/N')_{\mathfrak p},\]
then $\mathfrak p$ is the only associated prime ideal of $N/N''$. Therefore, we obtain that 
\[\mu_{A}(N',N)=\mu_{\max}(N'',N)=\mu(N'',N)=\{\mathfrak p\}.\]
\end{proof}

\begin{prop}
The bounded lattice $(\mathscr L,\subseteq)$ satisfies the ascending chain condition and the $\mu_A$-descending chain condition.
\end{prop}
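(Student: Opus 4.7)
The plan is to dispose of the ascending chain condition at once using Noetherianity, and then reduce the $\mu_A$-descending chain condition to the finiteness of a set of associated primes.

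First, since $R$ is N\oe therian and $M$ is of finite type, $M$ is a N\oe therian $R$-module. Every submodule of $M$ is thus itself N\oe therian, and any strictly ascending chain of sub-$R$-modules of $M$ must terminate. This gives the ascending chain condition on $(\mathscr L,\subseteq)$.

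For the $\mu_A$-descending chain condition, fix $a\in\mathscr L$ and suppose, for contradiction, that there is a strictly descending chain $x_0\supsetneq x_1\supsetneq\cdots$ of sub-$R$-modules with $x_n\supsetneq a$ for every $n$, such that
\[
\mu_A(a,x_0)<\mu_A(a,x_1)<\cdots
\]
in $(S,\leqslant)$. By Proposition \ref{Pro: muA N'N}, each $\mu_A(a,x_n)$ equals $\{\mathfrak p_n\}$, where $\mathfrak p_n$ is the least element of $\operatorname{Ass}(x_n/a)$ in the totally ordered set $(\operatorname{Spec}(R),\leqslant)$. Because $S_0$ is embedded as an ordered subset of its Dedekind--MacNeille completion $S$, the strict inequalities $\{\mathfrak p_n\}<\{\mathfrak p_{n+1}\}$ in $S$ amount to $\mathfrak p_n<\mathfrak p_{n+1}$ in $(\operatorname{Spec}(R),\leqslant)$.

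The key observation is that for each $n$, the inclusion $x_n/a\subseteq x_0/a$ entails \[\operatorname{Ass}(x_n/a)\subseteq\operatorname{Ass}(x_0/a),\] since any element $s\in x_n\subseteq x_0$ whose annihilator is $\mathfrak p$ witnesses $\mathfrak p\in\operatorname{Ass}(x_0/a)$ as well. Hence every $\mathfrak p_n$ lies in $\operatorname{Ass}(x_0/a)$. Since $R$ is N\oe therian and $x_0/a$ is a finitely generated $R$-module, $\operatorname{Ass}(x_0/a)$ is a \emph{finite} set (this is a standard consequence of N\oe therianity, cf.\ \cite[chap.~IV, \S1, no.~4]{MR782296}). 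A finite subset of the totally ordered set $(\operatorname{Spec}(R),\leqslant)$ cannot support an infinite strictly increasing sequence, contradicting the existence of the chain $(\mathfrak p_n)_{n\in\mathbb N}$. This contradiction establishes the $\mu_A$-descending chain condition.

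No step poses any real obstacle: the hard structural input (the identification $\mu_A(N',N)=\{\mathfrak p\}$) has already been proved in Proposition \ref{Pro: muA N'N}, and everything else is standard N\oe therian commutative algebra. The only small care needed is the passage between orderings on $\operatorname{Spec}(R)$, on $S_0$, and on its Dedekind--MacNeille completion $S$, which is immediate from the construction of the latter as an order embedding.
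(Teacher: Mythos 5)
Your proof is correct and follows essentially the same route as the paper: the ascending chain condition from N\oe therianity, and the $\mu_A$-descending chain condition from the fact that each $\mu_A(a,x_n)$ is a singleton drawn from $\operatorname{Ass}(x_n/a)\subseteq\operatorname{Ass}(x_0/a)$, a finite set. The only difference is that you spell out the finiteness of the set of associated primes and the order-embedding into the Dedekind--MacNeille completion, which the paper leaves implicit.
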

\begin{proof}
The first statement comes from the hypotheses that $R$ is a N\oe therian ring and $M$ is an $R$-module of finite type (and hence a N\oe therian $R$-module). In the following, we prove the second statement. Let $N$ be a sub-$R$-module of $M$ such that $N\subsetneq M$. Assume that  
\[M_0\supsetneq M_1\supsetneq\ldots\supsetneq M_n\supsetneq M_{n+1}\supsetneq\ldots\]
is a decreasing sequence of sub-$R$-modules
of $M$ such that $N\subsetneq M_n$ for any $n\in\mathbb N$. Since each $\mu_A(N,M_i)$ is a one point subset of $\operatorname{Ass}(M_i/N)\subseteq \operatorname{Ass}(M_0/N)$, it only has finitely many possible values. Therefore the inequalities  
\[\mu_A(N,M_0)<\mu_A(N,M_1)<\ldots<\mu_A(N,M_n)<\mu_A(N,M_{n+1})<\ldots\]
cannot hold simultaneously.\end{proof}

\begin{rema}
We consider the semi-stability of this Harder-Narasimhan game. Let $\mathfrak p$ be the least element of the set $\mu(\boldsymbol{0},M)$ in $(\operatorname{Spec}(R),\leqslant)$. The Harder-Narasimhan game of pay-off function $\mu$ is semi-stable if and only if, for any $N\in\mathscr L\setminus\{\boldsymbol{0}\}$, one has $\mu_A(\boldsymbol{0},N)=\{\mathfrak p\}$, or equivalently, $\mathfrak p$ is an associated prime ideal of $N$. Therefore, we obtain that the semi-stability of the Harder-Narasimhan game is equivalent to the condition that $M$ has only one associated prime ideal (namely $M$ is coprimary).
\end{rema}

\begin{theo}\label{Thm: coprimary sequence}
Let $M$ be a non-zero $R$-module of finite type. There exists a unique sequence 
\[0=M_0\subsetneq M_1\subsetneq M_2\subsetneq\ldots\subsetneq M_n=M\]
of sub-$R$-modules of $M$ and a sequence 
\[\mathfrak p_1>\ldots>\mathfrak p_n\]
of prime ideals in $\operatorname{Spec}(R)$ such that each subquotient $M_i/M_{i-1}$ is $\mathfrak p_i$-coprimary.
\end{theo}
\begin{proof}
This is a direct consequence of Theorem \ref{Thm: uniqueness of HN filtration}.
\end{proof}

\begin{rema}
In the above theorem, the set $\{\mathfrak p_1,\ldots,\mathfrak p_n\}$ identifies with the set $\operatorname{Ass}(M)$ of associated prime ideals of $M$. In fact, by \cite[chap. IV, \S1, no.4 Théorème 2]{MR782296}, one has 
\[\operatorname{Ass}(M)=\mu(\boldsymbol{0},M)\subseteq\{\mathfrak p_1,\ldots,\mathfrak p_n\}.\]
To show the reverse inclusions, remember that by Definition \ref{Def: HN filtration}, for any $i \in\{1, \ldots, n\}$ one has $\mu_A(M_{i})= \mu_A(M_{i-1},M_{i}) = \{\mathfrak{p}_{i}\}$, where $\mathfrak{p}_i$ is the least element of $\operatorname{Ass}(M_{i-1}, M_i)$ according to Proposition \ref{Pro: muA N'N}. 

Note that the existence of filtrations of $M$ with $\mathfrak{p}_i$-coprimary quotients is well-known in the literature (see for instance \cite[chap. IV, \S1, no.4 Théorème 1 and Théorème 2]{MR782296}). However, as noticed after the corollary that follows \cite[chap. IV, \S1, no.4 Théorème 2]{MR782296} the $\mathfrak{p}_i$'s are not necessarily uniquely determined by $M$. Nevertheless, the choice of a total order on $\Spec(R)$ allows one to get a more canonical filtration: while for the aforementioned well-known filtrations one only had the inclusion $\operatorname{Ass}(M) \subseteq \{\mathfrak p_1,\ldots,\mathfrak p_n\}$, the choice of a total order allows to obtain the Harder-Narasimhan filtration of Theorem \ref{Thm: coprimary sequence} by ordering $\operatorname{Ass}(M) \subseteq \{\mathfrak p_1,\ldots,\mathfrak p_n\}$ and choosing these ordered elements as the $\mathfrak{p}_i$'s, in decreasing order. This is \cite[chap. IV, Exercises, \S2, Exercise 7]{MR782296}. This also clarify the impact the choice of a total order on $\Spec(R)$ has: it will permute the $\mathfrak{p}_i$'s, but the number of factors in the filtration will remain the same.
\end{rema}

\subsection{Reductive group schemes over a curve}
\label{Sec: red gps over curve}

Let $C$ be a regular projective curve defined over a field $k$, and let $K$ be the function field of $C$. For any smooth affine algebraic group scheme with connected fibers $H$ over $C$, one defines the degree of $H$ to be that of its Lie algebra seen as a vector bundle over $C$. 

Let $G$ be a reductive group scheme over $C$, then $\deg(G) = 0$ (see \cite[Note 4.2]{BEH}), and the degrees of parabolic subgroups $P\subseteq G$ are bounded from above by a non-negative integer (this is a consequence of Riemann-Roch Theorem, see \cite[Lemma 4.3]{BEH}). A reductive group is semi-stable if $\deg(P)\leq 0$ for every parabolic subgroup $P\subseteq G$ (see \cite[Definition 4.4]{BEH}). The largest integer $d$ for which there exists a parabolic subgroup $P\subseteq G$ of degree $d$ is the degree of instability of $G$, denoted $d_i$. 

Fix a Borel subgroup $\tilde{B}\subseteq G$ as well as an adapted pinning for $B$. We consider the set $\mathscr L$ of parabolic subgroups of $G$ and endow $\mathscr L$ with the order induced by the type of the parabolic (see for instance \cite[XXVI 3.2 and 3.7]{SGA33}). For this order $\mathscr L$ has a greatest element, namely $G$, and a least element, namely $\tilde{B}$. Note that here, any pair of elements indeed has a maximal and a minimal element to any parabolic couple $(P,Q)$ corresponds a pair $(P', Q')$ of parabolic subgroups of same type of $P$, respectively $Q$, that contain $\tilde{B}$ and the minimal, respectively maximal element of $(P,Q)$ is that $(P', Q')$. For any couple of parabolic subgroups $(P, Q) \in \mathscr L$ we let $\mu(P, Q)= \deg(Q)$. Hence
\[\mu_A(P,Q) = \inf_{P\subseteq P' \subsetneq Q} \sup_{P'\subsetneq Q' \subseteq Q} \deg(Q'),\]
so in particular
\[\mu_A(G_0,G) = \inf_{G_0\subseteq P' \subsetneq G} \sup_{P'\subsetneq Q' \subseteq G} \deg(Q') = 0,\]
because \[\sup_{P'\subsetneq Q' \subseteq G} \deg(Q') \geqslant 0\] since $\deg(G) =0$. 
Therefore $G$ is semi-stable if and only if $\deg(P) \leqslant 0$ for every parabolic subgroup $P\subseteq G$. Indeed if $G$ is semi-stable then \[\mu_A(P) := \mu_A(\tilde{B}, P)\leq \mu_A(\tilde{B}, G) = 0.\] Hence 
\[\inf_{\tilde{B}\subseteq P' \subsetneq P} \sup_{P'\subsetneq Q' \subseteq P} \deg(Q')\leq 0,\]
which means, in particular, that $\deg(P)\leq 0$. The reverse implication is clear as $\deg(G)$ is always considered in $\mu_A(\tilde{B},G)$. In particular, $\mu$ is semi-stable as in Definition \ref{Def: semistability game} if and only if $G$ is semi-stable in the sense of \cite{BEH}. Moreover, according to Proposition \ref{Pro: destabilizing element}, there exists a maximal destabilising element in $\mathscr L$: the restriction of the group structure to a parabolic subgroup $P_i$ of $G$ of degree $d_i$ such that $P$ is maximal for the inclusion. Hence the machinery of Harder--Narasimhan games provides another proof of \cite[Theorem 7.3]{BEH}.

With the same notations as in the above example, let $E$ be a $G$-torsor with respect to the $\fppf$-topology. Any parabolic subgroup $P\subset G$ together with a section $\sigma : C \rightarrow E(G/P)$ defines a reduction of the structure group of $E$ to $P$, denoted as $E_P$. In particular $E_G := E$. The degree of a reduction of the structure group of $E$ to $P$ is that of the corresponding vector bundle $\sigma^{*}E\times^{\Ad}\Lie(P)$. This degree satisfies the same properties as the degree of the parabolic subgroups of $G$ introduced so far. In particular it is bounded by a positive integer, and the degree of $E$ is $0$.
 We set \[d_i(E):=\max\{\deg(\sigma^*E\times^{\Ad}\Lie(P))\mid P\subseteq G \text{ is parabolic and } \sigma: X \rightarrow E(G/P).\}\]
As above we fix a Borel subgroup $\tilde{B}$ and we consider the set of reductions of the structure group of $E$ to a parabolic subgroup of $G$ containing $\tilde{B}$, denoted as $\mathscr L_E$. We endow $\mathscr L_E$ with the order induced by that on $\mathscr L$ for parabolic subgroups of $G$. For any couple $(E_P, E_Q)$ of reductions of the structure group of $E$ to parabolic subgroups $P\subset Q$ of $G$, we let $\mu(E_P, E_Q)= \deg(E_Q)$. Hence
\[\mu_A(E_P,E_Q) = \inf_{P\subseteq P' \subsetneq Q} \sup_{P'\subsetneq Q' \subseteq Q} \deg(E_{Q'}),\]
so in particular
\[\mu_A(E_{\tilde{B}},E_G) = \inf_{\tilde{B}\subseteq P' \subsetneq G} \sup_{P'\subsetneq Q' \subset's G} \deg(E_{Q'}) = 0,\]
because $\sup_{P'\subsetneq Q' \subseteq G} \deg(E_{Q'} \geqslant 0$ as $\deg(E_G) =0$. 
Therefore $E_G$ is semi-stable if and only if $\deg(E_P) \leqslant 0$ for every reduction of group structure $(P,\beta)$ of $E$. Indeed, if $E_G$ is semi-stable then \[\mu_A(E_P) := \mu_A(E_{\tilde{B}}, E_P)\leq \mu_A(E_{\tilde{B}}, E_G) = 0.\] Hence 
\[\inf_{\tilde{B}\subseteq P' \subsetneq P} \sup_{P'\subsetneq Q' \subseteq P} \deg(E_{Q'})\leq 0,\]which means, in particular, that \[\deg(E_P):= \deg(\sigma^* E\times^{\Ad}\Lie(P))\leq 0.\] The reverse implication is clear as $\deg(E_G)$ is always considered in $\mu_A(E_{\tilde{B}},E_G)$. In particular, $\mu$ is semi-stable as defined in Definition \ref{Def: semistability game} if and only if $E_P$ is semi-stable in the sense of \cite[Proposition 4.2]{MR2483939} (note that the degrees there are positive because the author's chose a different convention). Moreover, according to \ref{Pro: destabilizing element}, there exists a maximal destabilising element in $\mathscr L$: the reduction of the group structure of $E$ to a parabolic subgroup $P_i$ of $G$ of degree $d_i$ such that $P$ is maximal for the inclusion, hence we find back \cite[Theorem 4.3]{MR2483939}.


\section{Comparison with classic Harder-Narasimhan theory}

Classic Harder-Narasimhan theory deals with the semi-stability and canonical filtration of vector bundles on a regular projective curve $C$. Let $E$ be a non-zero vector bundle on $C$, that is, a locally free $\mathcal O_C$-module of finite rank. We denote by $\mathscr L_E$ the set of all vector subbundles of $E$. This set, equipped with the relation of inclusion, forms a bounded lattice. Note that the greatest element of $\mathscr L_E$ is $E$, the least element of $\mathscr L_E$ is the zero vector bundle $\mathbf{0}$. If $F_1$ and $F_2$ are two vector subbundles of $E$, then the infimum of $\{F_1,F_2\}$ is equal to the intersection of $F_1$ and $F_2$, and the supremum of $\{F_1,F_2\}$ is equal to the sum of $F_1$ and $F_2$.

For any vector subbundle $F$ of $E$, we denote by \[\deg(F):=\deg(c_1(F)\cap [C])\] the degree of $F$, where $c_1$ is the first Chern class of $F$ and $[C]$ is the fundamental class of $C$. If $(F',F)$ is an element of $P_{\subsetneq}(\mathscr L_E)$, in the case where $\rang(F')<\rang(F)$, we let
\[\mu(F',F):=\frac{\deg(F)-\deg(F')}{\rang(F)-\rang(F')},\]
otherwise let $\mu(F',F)=+\infty$. We thus obtain a function $\mu$ on $P_{\subset}(\mathscr L_E)$ valued in the extended real number line $[-\infty,+\infty]$, which gives the pay-off function of a Harder-Narasimhan game.

Recall that the vector bundle $E$ is said to be \emph{semi-stable} if, for any non-zero vector subbundle $F \subseteq E$, one has 
\[\mu(\boldsymbol{0},F)=\frac{\deg(F)}{\rang(F)}\leqslant\frac{\deg(E)}{\rang(E)}=\mu(\boldsymbol{0},E).\]
In this section, we compare the semi-stability of Harder-Narasimhan game with this semi-stability condition by placing the problem in a general context, before addressing this specific situation in example \ref{Ex: classical HN game}. In what follows, we fix a bounded poset $(\mathscr L,\leqslant)$, a complete lattice $(S,\leqslant)$ and a map $\mu:P_{<}(\mathscr L)\rightarrow S$.

\subsection{Conditions for first-mover advantage}

Recall that
\[\mu_A^*=\inf_{x\in\mathscr L\setminus\{\top\}}\sup_{\begin{subarray}{c}y\in\mathscr L\\
x<y\end{subarray}}\mu(x,y)\]
delimits the optimal pay-off of the Harder-Narasimhan game under the hypothesis that Alice goes first. Similarly, 
\[\mu_{B}^*=\sup_{y\in\mathscr L\setminus\{\perp\}}\inf_{\begin{subarray}{c}x\in\mathscr L\\
x<y
\end{subarray}}\mu(x,y)\]
delimits the optimal pay-off of the Harder-Narasimhan game under the hypothesis that Bob goes first. Since the purpose of Alice is to minimize the pay-off and that of Bob is to maximize it, we obtain that the game favors the first mover if and only if $\mu_A^*\leqslant\mu_B^*$. In fact, if Alice goes first, then the optimal pay-off $\mu_A^*$ is not greater than that in the case where she goes second. So she has an advantage to go first since her purpose is to minimize the pay-off. For the same reason, if Bob goes first, then the optimal pay-off is not less than that in the case where he goes second, and thus he also has an advantage to go first since his aim is to maximize the pay-off. The following proposition gives a criterion of first-mover advantage. 

\begin{prop}\phantomsection\label{Pro: triangle 1}
Assume that the following conditions are satisfied:
\begin{enumerate}[label=\rm(\arabic*)]
\item\label{Item: condition 1} For any ascending chain 
\[x_0< x_1<\ldots< x_n< x_{n+1}<\ldots\]
of elements of $\mathscr L$, 
there exists $N\in\mathbb N$ such that  
$\mu(x_N,x_{N+1})\leqslant\mu(x_{N},\top)$.

\item\label{Item: condition 2} For any  $(x,y)\in\mathscr L^2$ such that $x<y<\top$, either $\mu(x,y)\leqslant\mu(x,\top)$ or $\mu(y,\top)\leqslant\mu(x,\top)$.
\end{enumerate}
Then the following equality holds
\begin{equation}\label{Equ: mu A star1}\mu_A^*:=\inf_{x\in\mathscr L\setminus\{\top\}}\sup_{\begin{subarray}{c}y\in\mathscr L\\
x<y\end{subarray}}\mu(x,y)=\inf_{x\in\mathscr L\setminus\{\top\}}\mu(x,\top).\end{equation}
In particular, one has $\mu_A^*\leqslant\mu_B^*$, namely the game favors the player who makes the first choice. 
\end{prop}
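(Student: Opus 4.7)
The plan is to prove the equality \eqref{Equ: mu A star1} by establishing both inequalities, and then to derive the first-mover advantage from it as an easy consequence. The inequality $\mu_A^* \geqslant \inf_{x \in \mathscr L \setminus \{\top\}} \mu(x, \top)$ is immediate, since $\top$ belongs to $\{y \in \mathscr L : x < y\}$ for any admissible $x$, so $\sup_{y > x} \mu(x, y) \geqslant \mu(x, \top)$ and it suffices to take the infimum over $x$.

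For the reverse inequality I would fix an arbitrary $x \in \mathscr L \setminus \{\top\}$ and build by recursion a finite ascending chain $x = x_0 < x_1 < \ldots < x_N$ in $\mathscr L \setminus \{\top\}$ whose terminal element $x^* := x_N$ satisfies $\sup_{y > x^*} \mu(x^*, y) \leqslant \mu(x, \top)$. The recursion is as follows: at stage $i$, if there exists $y$ with $x_i < y < \top$ and $\mu(x_i, y) > \mu(x_i, \top)$, set $x_{i+1} := y$ and continue; otherwise halt. By condition \ref{Item: condition 2}, any such $y$ automatically satisfies $\mu(y, \top) \leqslant \mu(x_i, \top)$, so an easy induction shows $\mu(x_i, \top) \leqslant \mu(x, \top)$ at every stage.

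The crucial point is termination of this recursion. If it did not terminate, the resulting infinite chain $(x_i)_{i \in \mathbb N}$ would satisfy $\mu(x_i, x_{i+1}) > \mu(x_i, \top)$ for every $i$, directly contradicting condition \ref{Item: condition 1}. Hence the recursion halts at some $x^* \in \mathscr L \setminus \{\top\}$ such that no $y$ with $x^* < y < \top$ satisfies $\mu(x^*, y) > \mu(x^*, \top)$. Combining the bound $\mu(x^*, y) \leqslant \mu(x^*, \top)$ for all such $y$ with the trivial contribution $\mu(x^*, \top)$ coming from $y = \top$ (the edge case where no intermediate $y$ exists being absorbed automatically), I would conclude
\[\sup_{y > x^*} \mu(x^*, y) = \mu(x^*, \top) \leqslant \mu(x, \top),\]
hence $\mu_A^* \leqslant \mu(x, \top)$. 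Taking the infimum over $x$ yields the desired inequality.

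Finally, for $\mu_A^* \leqslant \mu_B^*$, I would just use the equality \eqref{Equ: mu A star1} just proved together with the trivial bound $\mu_B^* \geqslant \inf_{x < \top} \mu(x, \top)$ obtained by taking $y = \top$ in the outer supremum defining $\mu_B^*$. The main obstacle is the termination argument: one must combine condition \ref{Item: condition 2}, which guarantees that replacing $x_i$ by $x_{i+1}$ does not increase $\mu(\cdot, \top)$, with condition \ref{Item: condition 1}, which rules out infinite strictly ascending chains along which $\mu(x_i, x_{i+1})$ stays strictly above $\mu(x_i, \top)$.
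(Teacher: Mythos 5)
Your overall strategy --- running the recursion $x=x_0<x_1<\ldots$, using condition \ref{Item: condition 2} to prevent $\mu(x_i,\top)$ from increasing, and using condition \ref{Item: condition 1} to force termination --- is essentially the constructive rephrasing of the paper's proof, which argues by contradiction from a maximal ``bad'' element. The deduction of $\mu_A^*\leqslant\mu_B^*$ from the equality \eqref{Equ: mu A star1} is also the same as in the paper.

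There is, however, one genuine gap. Your halting criterion is phrased with the strict order $>$, but the proposition is stated for a general complete lattice $(S,\leqslant)$, which need not be totally ordered. When the recursion halts at $x^*$, what you actually know is that $\mu(x^*,y)\not>\mu(x^*,\top)$ for every $y$ with $x^*<y<\top$; in a partial order this does \emph{not} yield $\mu(x^*,y)\leqslant\mu(x^*,\top)$ --- the two values may be incomparable, in which case $\sup_{y>x^*}\mu(x^*,y)$ can lie strictly above $\mu(x^*,\top)$ and your key identity $\sup_{y>x^*}\mu(x^*,y)=\mu(x^*,\top)$ fails. The repair is to replace $>$ by $\not\leqslant$ throughout: continue the recursion as long as there exists $y$ with $x_i<y<\top$ and $\mu(x_i,y)\not\leqslant\mu(x_i,\top)$. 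Every other step survives verbatim: the failure of the first disjunct of condition \ref{Item: condition 2} is precisely $\mu(x_i,y)\not\leqslant\mu(x_i,\top)$, so $\mu(x_{i+1},\top)\leqslant\mu(x_i,\top)$ still holds, and an infinite run would produce an ascending chain with $\mu(x_N,x_{N+1})\not\leqslant\mu(x_N,\top)$ for all $N$, contradicting condition \ref{Item: condition 1} directly. With this modification your argument is complete; if one assumes $(S,\leqslant)$ totally ordered, as in the later applications, your version as written is already correct, since there $\not>$ and $\leqslant$ coincide.
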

\begin{proof}
We show that, if Bob goes first and chooses the element $\top$, whatever  choice $y_A$ that Alice makes in the second step, there exists an element $x_A$ of $\mathscr L\setminus\{\top\}$ such that  
\[\forall\,x_B\in\mathscr L,\quad x_A<x_B\Longrightarrow \mu(x_A,x_B)\leqslant\mu(y_A, \top).\]
This statement signifies that Alice would be able to match or even beat the pay-off result $\mu(y_A,\top)$ if she went first.
In particular, we have
\[\mu_A^*=\inf_{x\in\mathscr L\setminus\{\top\}}\sup_{\begin{subarray}{c}y\in\mathscr L\\ x<y\end{subarray}}\mu(x,y)\leqslant \inf_{\begin{subarray}{c}
x\in\mathscr L\setminus\{\top\}\\
\end{subarray}}\mu(x,\top).\]
Since \[\forall\,x\in\mathscr L\setminus\{\top\},\quad \mu(x,\top)\leqslant\displaystyle\sup_{y\in\mathscr L, \ x<y}\mu(x,y),\] this leads to the equality \eqref{Equ: mu A star1}.

We reason by contradiction in assuming that $y_A$ is an element of $\mathscr L\setminus\{\top\}$ such that, for any $x_A\in\mathscr L\setminus\{\top\}$, there exists $x_B\in\mathscr L$ satisfying \[x_A<x_B\text{ and }\mu(x_A,x_B)\not\leqslant \mu(y_A, \top).\]
In the particular case where $x_A=y_A$, we obtain that there exists $y_B\in\mathscr L$ such that 
\begin{equation}\label{Equ: reason by contradiction1}
y_A< y_B\text{ and }\mu(y_A, y_B)\not\leqslant\mu(y_A,\top).
\end{equation}

By the condition (1), we may assume that $y_A$ is  maximal among \[\left\{y_A' \in \mathscr L \setminus \{\top\} \middle\vert \begin{array}{l}\forall\, x_A \in \mathscr L\setminus\{\top\},\; \exists\, x_B \in \mathscr L \text{ such that}\\ x_A < x_B \text{ and } \mu(x_A, x_B) \not\leqslant  \mu(y'_A, \top)\end{array}\right\}.\] Therefore, if $y_A' \in \mathscr L\setminus\{\top\}$ is such that $y_A <y_A'$ and $\mu(y_A,y_A')\not\leqslant\mu(y_A,\top)$, there exists $x_A'\in\mathscr L\setminus\{\top\}$ such that
\begin{equation}\label{Equ:for any xb1}\forall\,w\in\mathscr L,\quad x_A'<w\Longrightarrow\mu(x_A',w)\leqslant\mu(y_A', \top).
\end{equation}
By the condition (2), one then has $\mu(y_A',\top)\leqslant\mu(y_A, \top)$. However, by the hypothesis on $y_A$, for any $x_{A}' \in \mathscr L\setminus{\top}$ there exists $x_B'\in\mathscr L$ such that $x_A'<x_B'$ and $\mu(x_A',x_B')\not\leqslant\mu(y_A, \top)$. But then the inequality $\mu(y_A',\top)\leqslant\mu(y_A, \top)$ cannot hold true since otherwise \eqref{Equ:for any xb1} applied to $w = x_B'$ would give 
\[\mu(x_A',x_B')\leqslant\mu(y_A', \top)\leqslant\mu(y_A,\top),\]
which leads to a contradiction. Thus, we obtain $\mu(y_A,y_A')\leqslant\mu(y_A,\top)$ for any $y_A'\in\mathscr L$ such that $y_A< y_A'\leqslant \top$, which contradicts the condition \eqref{Equ: reason by contradiction1} for $y_A$. Therefore, the equality \eqref{Equ: mu A star1} holds. 

Finally, since
\[\mu_B^*=\sup_{y\in\mathscr L\setminus\{\perp\}}\inf_{\begin{subarray}{c}
x\in\mathscr L\\
x<y
\end{subarray}}\mu(x,y),\]
the equality \eqref{Equ: mu A star1} leads to \[\mu_A^*=\inf_{x\in\mathscr L\setminus\{\top\}}\mu(x,\top)\leqslant\mu_B^*.\]
\end{proof}

\begin{rema}\label{Rem: weak ACC}
Note that the condition (1) of Proposition \ref{Pro: triangle 1} is a weak version of the ascending chain condition, which is clearly satisfied by the bounded lattice of vector subbundles of a vector bundle on a regular projective curve.
\end{rema}

\subsection{Dual Harder-Narasimhan game}

We consider the converse order on the sets $\mathscr L$ and $S$. Note that $(\mathscr L,\geqslant)$ still forms a bounded poset, and $(S,\geqslant)$ is a complete lattice. Let $\widetilde{\mu}:P_{>}(\mathscr L)\rightarrow S$ be the map that sends a pair $(y,x)\in P_{>}(\mathscr L)$ to $\widetilde{\mu}(y,x):=\mu(x,y)$. This map defines the pay-off function of a Harder-Narasimhan game on $(\mathscr L,\geqslant)$. We call this game the \emph{dual} of the Harder-Narasimhan game on $(\mathscr L,\leqslant)$ with pay-off function $\mu$. By definition, one has
\begin{gather*}\widetilde{\mu}_B^*=\inf_{b\in\mathscr L\setminus\{\top\}}\sup_{\begin{subarray}{c}
a\in\mathscr L\\
a>b
\end{subarray}}\mu(b,a)=\mu_A^*,\\
\widetilde\mu_{A}^*=\sup_{a\in\mathscr L\setminus\{\perp\}}\inf_{\begin{subarray}{c}
b\in\mathscr L\\
a>b
\end{subarray}}\mu(b,a)=\mu_B^*
\end{gather*}
where the infima and the suprema are taken in the initial complete lattice $(S,\leqslant)$ in order to avoid confusions.

\begin{prop}\phantomsection\label{Pro: triangle 2}
Assume that the following conditions are satisfied:
\begin{enumerate}[label=\rm($\widetilde{\arabic*}$)]
\item\label{Item: tilde 1} For any descending chain 
$x_0> x_1>\ldots> x_n>x_{n+1}>\ldots$
of elements of $\mathscr L$, there exists $N\in\mathbb N$ such that
$\mu(\perp,x_N)\leqslant\mu(x_{N+1},x_N)$.

\item\label{Item: tilde 2} For any triple $(x,y)\in\mathscr L^2$ such that $\perp<x<y$, either $\mu(\perp,x)\leqslant\mu(x,y)$ or $\mu(\perp,y)\leqslant\mu(\perp,x)$.
\end{enumerate}
Then the following equality holds:
\begin{equation}\label{Equ: mu A star2}\mu_B^*:=\sup_{y\in\mathscr L\setminus\{\perp\}}\inf_{\begin{subarray}{c}
x\in\mathscr L\\
x<y
\end{subarray}}\mu(x,y)=\sup_{y\in\mathscr L\setminus\{\perp\}}\mu(\perp,y).\end{equation}
In particular, one has $\mu_A^*\leqslant\mu_B^*$, namely the dual game favors the player who makes the first choice. 
\end{prop}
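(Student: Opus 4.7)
This proposition is the exact dual of Proposition \ref{Pro: triangle 1}, so the plan is to derive it by applying Proposition \ref{Pro: triangle 1} to the dual Harder--Narasimhan game on $(\mathscr L,\geqslant)$ with pay-off $\widetilde{\mu}$, viewed as taking values in $(S,\geqslant)$. Under this double order reversal, $\widetilde{\top}=\perp$, $\widetilde{\perp}=\top$, the $\widetilde{\inf}$ on $(S,\geqslant)$ coincides with the ordinary $\sup$ on $(S,\leqslant)$, and the quantity $\widetilde{\mu_A^*}$ unwinds into $\mu_B^*$; correspondingly, $\widetilde{\mu}(x,\widetilde{\top})$ unwinds as $\mu(\perp,x)$. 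Thus the conclusion $\widetilde{\mu_A^*}=\widetilde{\inf}_x\widetilde{\mu}(x,\widetilde{\top})$ of Proposition \ref{Pro: triangle 1} applied in the dual becomes exactly \eqref{Equ: mu A star2}, and the first-mover advantage $\widetilde{\mu_A^*}\widetilde{\leqslant}\widetilde{\mu_B^*}$ unwinds to $\mu_A^*\leqslant\mu_B^*$.

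The substantive work is to check that the hypotheses \ref{Item: tilde 1} and \ref{Item: tilde 2} are the translations of hypotheses \ref{Item: condition 1} and \ref{Item: condition 2} along this dualisation. For \ref{Item: tilde 1}, an ascending chain in $(\mathscr L,\geqslant)$ is a descending chain in the original order, and the inequality $\widetilde{\mu}(x_N,x_{N+1})\widetilde{\leqslant}\widetilde{\mu}(x_N,\widetilde{\top})$ unwinds to $\mu(\perp,x_N)\leqslant\mu(x_{N+1},x_N)$, which is \ref{Item: tilde 1}. For \ref{Item: tilde 2}, condition \ref{Item: condition 2} applied to a dual chain $x\widetilde{<}y\widetilde{<}\widetilde{\top}$ (i.e.\ $\perp<y<x$ in the original) gives, after renaming $(y,x)\mapsto(x',y')$ so that $\perp<x'<y'$, the disjunction in \ref{Item: tilde 2}.

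As a more concrete alternative, one can simply mimic the proof of Proposition \ref{Pro: triangle 1} under the symmetry $\top\leftrightarrow\perp$, Alice $\leftrightarrow$ Bob, $\leqslant\leftrightarrow\geqslant$ on $S$: show that for every $y_B\in\mathscr L\setminus\{\perp\}$ there exists $x_B\in\mathscr L\setminus\{\perp\}$ such that $\mu(x_A,x_B)\geqslant\mu(\perp,y_B)$ for all $x_A<x_B$; then $\sup_y\mu(\perp,y)\leqslant\mu_B^*$ follows, the reverse inequality being immediate from the definition of $\mu_B^*$. One argues by contradiction: assume a \emph{bad} $y_B$, and use \ref{Item: tilde 1} to make it minimal among such; extract $y_B'<y_B$ from the bad property of $y_B$ at $x_B=y_B$; exploit the non-badness of $y_B'$ to obtain a good witness $x_B'$; apply \ref{Item: tilde 2} to derive the crucial inequality $\mu(\perp,y_B)\leqslant\mu(\perp,y_B')$; and finally invoke the bad property of $y_B$ at $x_B=x_B'$ to produce a contradicting pair $(x_A',x_B')$.

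The main obstacle is the bookkeeping around the order reversal, especially the translation between \ref{Item: condition 2} and \ref{Item: tilde 2}; one has to keep in mind that $\widetilde{\mu}(a,b)=\mu(b,a)$ and carefully track the renaming of variables induced by swapping ``smallest'' and ``largest''. Once this is settled, Proposition \ref{Pro: triangle 2} falls out of Proposition \ref{Pro: triangle 1} applied to the dual, and the direct alternative has the same content without requiring one to set up the dual game formally.
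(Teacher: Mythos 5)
Your main route is exactly the paper's proof: the paper disposes of Proposition \ref{Pro: triangle 2} in one line, namely ``this is a consequence of Proposition \ref{Pro: triangle 1} by passing to the dual Harder-Narasimhan game'', and your bookkeeping of the double order reversal (dual top $=\perp$, $\widetilde{\mu}(a,b)=\mu(b,a)$, infima becoming suprema) is precisely the content the paper leaves implicit. The unwinding of the conclusion and of condition \ref{Item: tilde 1} is correct.

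However, your claim that condition \ref{Item: condition 2} dualises to the disjunction of \ref{Item: tilde 2} does not check out, and this is the one place where the argument genuinely needs care. Applying \ref{Item: condition 2} in the dual game to a chain $a,b$ with $\perp<b<a$ and unwinding $\widetilde{\mu}(a,b)=\mu(b,a)$, $\widetilde{\mu}(a,\perp)=\mu(\perp,a)$, $\widetilde{\mu}(b,\perp)=\mu(\perp,b)$, with all inequalities reversed, one obtains after renaming $(b,a)=(x,y)$, $\perp<x<y$: either $\mu(\perp,y)\leqslant\mu(x,y)$ or $\mu(\perp,y)\leqslant\mu(\perp,x)$. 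The first disjunct involves $\mu(\perp,y)$, not $\mu(\perp,x)$ as printed in \ref{Item: tilde 2}, and the two conditions are not equivalent. Indeed the proposition is false under \ref{Item: tilde 2} as literally stated: on the three-element chain $\perp<x<\top$ with $\mu(\perp,x)=0$, $\mu(x,\top)=1$, $\mu(\perp,\top)=2$, conditions \ref{Item: tilde 1} and \ref{Item: tilde 2} hold, yet $\mu_B^*=\sup\{0,\min\{2,1\}\}=1$ while $\sup_{y}\mu(\perp,y)=2$, so \eqref{Equ: mu A star2} fails. This mismatch is really a defect of the paper's statement rather than of your strategy (most likely a typo: the remark in Definition \ref{Def: etoile gras} derives \ref{Item: tilde 2} from the slope-like condition (4), which in fact yields the corrected disjunction, and for slope-like pay-offs both versions hold, so nothing downstream is affected). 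Still, your assertion that the renaming ``gives the disjunction in \ref{Item: tilde 2}'' papers over the gap: you should either state the corrected first disjunct $\mu(\perp,y)\leqslant\mu(x,y)$, or, in the direct variant you sketch, note that the step deducing $\mu(\perp,y_B)\leqslant\mu(\perp,y_B')$ from $\mu(y_B',y_B)\not\geqslant\mu(\perp,y_B)$ requires exactly that corrected disjunct and does not follow from \ref{Item: tilde 2} as printed.
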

\begin{proof}
This is a consequence of Proposition \ref{Pro: triangle 1} by passing to the dual Harder-Narasimhan game.
\end{proof}

\begin{rema}\phantomsection\label{Rem: strong descending chain condition}
The condition \ref{Item: tilde 1} of Proposition \ref{Pro: triangle 2}, which is the dual of the condition (1) of Proposition \ref{Pro: triangle 1}) and appears as a strong version of a descending chain condition, is notably satisfied in the following situation. Let $r:\mathscr L\rightarrow\mathbb R$ be an increasing map whose image is a well-ordered subset of $\mathbb R$. If $\mu:P_{<}(\mathscr L)\rightarrow S$ is a map such that $\mu(x,y)=+\infty$ whenever $r(x)=r(y)$, then it satisfies the condition \ref{Item: tilde 1} of Proposition \ref{Pro: triangle 2}. In fact, if 
\[x_0>x_1>\ldots>x_n>x_{n+1}>\ldots\]
is a descending chain of elements of $\mathscr L$, then there exists $N\in\mathbb N$ such that \[r(x_N)=\min\{r(x_n)
\,:\,n\in\mathbb N\}.\] Thus, $r(x_{N+1})=r(x_N)$, and hence
\[\mu(x_{N+1},x_{N})=+\infty\geqslant\mu(\perp,x_N).\] 
\end{rema}

\subsection{Slope-like pay-off function}

Remarks \ref{Rem: weak ACC} and \ref{Rem: strong descending chain condition} investigate certain conditions under which the condition (1) of Proposition \ref{Pro: triangle 1} and the condition \ref{Item: tilde 1} of Proposition \ref{Pro: triangle 2}. In this subsection, we show that, if the pay-off function $\mu$ can be expressed as the quotient of two additive functions satisfying certain mild properties (encoding a locally concave or convex behaviour), then it satisfies the condition (2) of Proposition \ref{Pro: triangle 1} and the condition \ref{Item: tilde 2} of Proposition \ref{Pro: triangle 2}.

\begin{defi}\phantomsection\label{Def: etoile gras}We say that the pay-off function $\mu$ is \emph{slope-like} if, for any $(x,y,z)\in\mathscr L^3$ such that $x<y<z$, the following four statements hold:
\begin{enumerate}[label=\rm(\arabic*)]
\item 
Either $\mu(x,y)\leqslant\mu(x,z)$, or $\mu(y,z)<\mu(x,z)$.
\item  Either $\mu(x,y)<\mu(x,z)$, or $\mu(y,z)\leqslant\mu(x,z)$.
\item Either $ \mu(x,z)<\mu(x,y)$, or $\mu(x,z)\leqslant\mu(y,z)$.
\item  Either $ \mu(x,z)\leqslant\mu(x,y)$, or $\mu(x,z)<\mu(y,z)$.
\end{enumerate}
It is not hard to check that, if $\mu$ is slope-like, then $\mu_{[a,b]}$ is also slope-like for any $(a,b)\in P_{<}(\mathscr L)$. Moreover, the condition (2) of Proposition \ref{Pro: triangle 1} and the condition \ref{Item: tilde 2} of Proposition \ref{Pro: triangle 2} can be derived respectively from conditions (1) and (4) if $\mu$ is slope-like. The following Proposition establishes a link between the slope-like condition with the seesaw property introduced by Rudakov \cite[Definition 1.1]{MR1480783}.
\end{defi}

\begin{prop}\phantomsection\label{Pro: equivalent star}
The pay-off function $\mu$ is slope-like if and only if, for any $(x,y,z)\in\mathscr L^3$ such that $x<y<z$, one \textup{(}and only one\textup{)} of the statements below is true 
\begin{enumerate}[label=\rm(\alph*)]
\item $\mu(x,y)<\mu(x,z)<\mu(y,z)$,
\item$\mu(x,y)>\mu(x,z)>\mu(y,z)$,
\item$\mu(x,y)=\mu(x,z)=\mu(y,z)$.
\end{enumerate} 
\end{prop}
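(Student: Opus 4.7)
The plan is to prove the two directions of the equivalence, with the ``only one'' clause being immediate since the three statements (a), (b), (c) are pairwise incompatible (e.g., (a) forbids $\mu(x,y)=\mu(x,z)$, etc.).

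For the easier direction, assume the trichotomy (a)/(b)/(c) holds for every chain $x<y<z$. Then each of the four conditions (1)--(4) defining the slope-like property is immediately satisfied: in case (a), conditions (1) and (2) hold via the ``$\mu(x,y)<\mu(x,z)$'' branch and conditions (3) and (4) via the ``$\mu(x,z)<\mu(y,z)$'' branch (with the appropriate relaxation to $\leqslant$); in case (b) the opposite branches are used; in case (c) all four conditions are satisfied through their equality branches. This is essentially a verification, with no real obstacle.

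The main direction is: assuming $\mu$ slope-like, derive the trichotomy. I will split into three cases on the comparison of $\mu(x,y)$ and $\mu(x,z)$, treating each using the conditions (1)--(4). Suppose first that $\mu(x,y)<\mu(x,z)$. By condition (3), since $\mu(x,z)\not<\mu(x,y)$, we must have $\mu(x,z)\leqslant\mu(y,z)$. By condition (4), since $\mu(x,z)\not\leqslant\mu(x,y)$, we obtain $\mu(x,z)<\mu(y,z)$, giving case (a). Suppose next that $\mu(x,y)>\mu(x,z)$. By condition (1), since $\mu(x,y)\not\leqslant\mu(x,z)$, we get $\mu(y,z)<\mu(x,z)$, which produces case (b) (conditions (2)--(4) are then automatically verified). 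Finally, suppose $\mu(x,y)=\mu(x,z)$. Condition (2) forces $\mu(y,z)\leqslant\mu(x,z)$ (since $\mu(x,y)\not<\mu(x,z)$), while condition (3) forces $\mu(x,z)\leqslant\mu(y,z)$ (since $\mu(x,z)\not<\mu(x,y)$); combined, $\mu(y,z)=\mu(x,z)=\mu(x,y)$, giving case (c).

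There is no real obstacle here; the whole argument is a bookkeeping exercise in which conditions (1), (2) rule out the mixed configuration ``$\mu(x,y)>\mu(x,z)$ yet $\mu(y,z)\geqslant\mu(x,z)$'' and conditions (3), (4) rule out the symmetric ``$\mu(x,y)<\mu(x,z)$ yet $\mu(y,z)\leqslant\mu(x,z)$'', leaving only the three monotone trichotomy cases. The subtlety worth pointing out in writing is why each of the four conditions is genuinely needed: conditions (1) and (3) are responsible for extracting the correct \emph{strict} inequalities in the pure cases (a) and (b), whereas (2) and (4) are what preclude the borderline mixed configurations in which one of the three terms coincides with another while the third lies strictly on one side; without both versions, case (c) could fail to be forced from $\mu(x,y)=\mu(x,z)$.
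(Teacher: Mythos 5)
Your argument is essentially the paper's: both proceed by a case analysis on how $\mu(x,y)$ compares with $\mu(x,z)$ and then invoke the four clauses of Definition \ref{Def: etoile gras} to pin down $\mu(y,z)$, and the backward direction together with the three cases you do treat are all handled correctly. There is, however, one genuine gap in the forward direction: you ``split into three cases on the comparison of $\mu(x,y)$ and $\mu(x,z)$'', i.e.\ you assume the trichotomy $<$, $>$, $=$. In this proposition $(S,\leqslant)$ is only a complete lattice, not a totally ordered set (total order is only imposed later, e.g.\ in Theorem \ref{Thm: equivalence of stability conditions}), so $\mu(x,y)$ and $\mu(x,z)$ could a priori be incomparable and your case analysis is not exhaustive as written.

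The omission is easily repaired, and the repair is exactly how the paper organizes its proof: split instead on ``$\mu(x,y)<\mu(x,z)$'' versus ``$\mu(x,y)\not<\mu(x,z)$''. In the second branch, clause (2) yields $\mu(y,z)\leqslant\mu(x,z)$, hence $\mu(x,z)\not<\mu(y,z)$, and then clause (4) yields $\mu(x,z)\leqslant\mu(x,y)$; in particular $\mu(x,y)$ and $\mu(x,z)$ are forced to be comparable, and one is reduced to your cases $>$ and $=$. If you prefer to keep the three-way split, you must add this short argument ruling out incomparability (or explicitly assume $S$ totally ordered, which would weaken the statement).
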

\begin{proof}
``$\Longleftarrow$'': Let $(x,y,z)\in\mathscr L^3$ such that $x<y<z$. If $\mu(y,z)\not<\mu(x,z)$, then the statement (b) is not true. Thus (a) or (c) is true, which shows that $\mu(x,y)\leqslant \mu(x,z)$ holds. Hence the statement (1) in Definition \ref{Def: etoile gras} holds. The proof of other statements are similar.

``$\Longrightarrow$'': Let $(x,y,z)\in\mathscr L^3$ such that $x<y<z$. There are two possibilities:
\begin{itemize}
	\item either $\mu(x,y)<\mu(x,z)$, one has $\mu(x,y)\not\geqslant \mu(x,z)$. Hence by (4) we obtain $\mu(x,z)<\mu(y,z)$; 
	\item or $\mu(x,y)\not<\mu(x,z)$, by (2) we obtain $\mu(y,z)\leqslant \mu(x,z)$. Hence $\mu(y,z)\not>\mu(x,z)$. Furthermore, by (4) one has $\mu(x,z)\leqslant\mu(x,y)$: 
	\begin{itemize}
	\item if $\mu(x,z)<\mu(x,y)$, one has $\mu(x,z)\not\geqslant\mu(x,y)$ and hence by (1) we obtain $\mu(y,z)<\mu(x,z)$; 
	\item if $\mu(x,z)=\mu(x,y)$, then $\mu(x,z)\not<\mu(x,y)$, and by (3) we obtain $\mu(x,z)\leqslant\mu(y,z)$, which leads to the equality $\mu(x,z)=\mu(y,z)$. 
	\end{itemize}
\end{itemize}
\end{proof}

\begin{defi}
We call a \emph{totally ordered vector space over $\mathbb R$} any real vector space $V$ equipped with a total order $\leqslant$ that satisfies the following condition:
if $y$ and $z$ are two elements of $V$ such that $y\leqslant z$, then, 
\begin{enumerate}[label=\rm(\arabic*)]
\item for any $x\in V$ one has $x+y\leqslant x+z$, 
\item for any $\lambda\in\mathbb R_{\geqslant 0}$ one has $\lambda y\leqslant\lambda z$.
\end{enumerate}
\end{defi}

The following proposition shows that the pay-off function $\mu$ is slope-like if it is induced by a degree function valued in an ordered vector space.

\begin{prop}\label{Pro: degree function}
Let $(V,\leqslant)$ be a totally ordered vector space over $\mathbb R$ and $(S,\leqslant)$ be the Dedekind–MacNeille completion of $(V,\leqslant)$ \textup{(}namely the smallest complete lattice containing $(V,\leqslant)$\textup{)}. Let 
\[r:P_{<}(\mathscr L)\longrightarrow\mathbb R_{\geqslant 0}\quad\text{ and }\quad d:P_{<}(\mathscr L)\longrightarrow V \]
be two maps that satisfy the following conditions:
\begin{enumerate}[label=\rm(\roman*)]
\item\label{Item: condition i} For any $(x,y,z)\in\mathscr L^3$ with $x<y<z$, one has
\[d(x,z)=d(x,y)+d(y,z),\quad r(x,z)=r(x,y)+r(y,z).\] 
\item\label{Item: rxy=0} For any $(x,y)\in\mathscr L^2$ such that $x<y$, if $r(x,y)=0$ then $d(x,y)> 0$.
\end{enumerate}
Suppose that $\mu:P_{<}(\mathscr L)\rightarrow S$ is given by
\[\mu(x,y)=\begin{cases}
r(x,y)^{-1}d(x,y),& \text{ if }r(x,y)>0,\\
+\infty,& \text{ if } r(x,y)=0.
\end{cases}\]
Then the function $\mu$ is slope-like.
\end{prop}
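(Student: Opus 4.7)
The plan is to verify the trichotomy of Proposition \ref{Pro: equivalent star}, which is equivalent to the slope-like condition. Fix a triple $(x,y,z)\in\mathscr L^3$ with $x<y<z$, and set
\[d_1=d(x,y),\quad d_2=d(y,z),\quad r_1=r(x,y),\quad r_2=r(y,z).\]
By the additivity condition \ref{Item: condition i}, we have $d(x,z)=d_1+d_2$ and $r(x,z)=r_1+r_2$. The proof then proceeds by case analysis on the vanishing of $r_1$ and $r_2$.

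If $r_1=r_2=0$, then $r(x,z)=0$, so by definition $\mu(x,y)=\mu(y,z)=\mu(x,z)=+\infty$, giving case (c). If exactly one of $r_1,r_2$ vanishes, say $r_1=0$ and $r_2>0$, then $d_1>0$ by hypothesis \ref{Item: rxy=0}, so $\mu(x,y)=+\infty$ while $\mu(x,z)=(d_1+d_2)/r_2$ and $\mu(y,z)=d_2/r_2$; the order axioms of $V$ (addition preserves order) give $d_2<d_1+d_2$, and then multiplication by $r_2^{-1}>0$ yields $\mu(y,z)<\mu(x,z)<\mu(x,y)$, case (b). The symmetric subcase $r_1>0$, $r_2=0$ gives case (a) analogously.

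The main case is $r_1>0$ and $r_2>0$. Here the totally ordered vector space structure reduces the problem to the classical mediant inequality: I would show that the sign of $\mu(x,y)-\mu(y,z)=(d_1 r_2 - d_2 r_1)/(r_1 r_2)$ controls everything. Starting from $d_1 r_2 < d_2 r_1$ (the case $\mu(x,y)<\mu(y,z)$), I add $d_1 r_1$ to both sides to get $(r_1+r_2)d_1<r_1(d_1+d_2)$, then multiply by $(r_1(r_1+r_2))^{-1}>0$ to obtain $\mu(x,y)<\mu(x,z)$; adding $d_2 r_2$ instead yields $\mu(x,z)<\mu(y,z)$, placing us in case (a). The equality case $d_1 r_2=d_2 r_1$ gives case (c) by the same manipulations, and the reversed inequality gives case (b).

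The only subtle point is to make sure that the manipulations in the totally ordered vector space $V$ are justified: the compatibility of the order with addition is used when moving terms between sides of an inequality, and compatibility with multiplication by non-negative reals is used to clear denominators $r_1$, $r_2$, $r_1+r_2$ (all strictly positive in the main case). Both are guaranteed by the definition of a totally ordered vector space over $\mathbb{R}$, so there is no real obstacle beyond the bookkeeping of the three sub-cases of the sign of $d_1 r_2 - d_2 r_1$, and I do not expect any step to be genuinely difficult.
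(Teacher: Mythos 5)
Your proof is correct and follows essentially the same route as the paper: reduce to the trichotomy of Proposition \ref{Pro: equivalent star}, split on the vanishing of $r(x,y)$ and $r(y,z)$, and in the main case use the mediant/weighted-average identity $r(x,y)\mu(x,z)+r(y,z)\mu(x,z)=r(x,y)\mu(x,y)+r(y,z)\mu(y,z)$ (which the paper states directly and you unpack by clearing denominators). The only cosmetic difference is that your mediant computation is more explicit where the paper reads the conclusion off the weighted-average identity at once.
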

\begin{proof}
Let $x$, $y$ and $z$ be three elements of $\mathscr L^3$ such that $x<y<z$. By \ref{Item: condition i}, one has 
\begin{equation}\label{Equ: degree additivity}d(x,z)=d(x,y)+d(y,z).
\end{equation}

We proceed case by case:
\begin{itemize}
 	\item In the case where $r(x,z)>0$: 
 		\begin{itemize}
 			\item if $r(x,y)> 0$ and $r(y,z)> 0$, then using that $r(x,z) = r(x,y) + r(y,z)$ we can  write \ref{Equ: degree additivity} as
\[r(x,y)\mu(x,z)+r(y,z)\mu(x,z)=r(x,y)\mu(x,y)+r(y,z)\mu(y,z),\]
which shows that one of the statements (a), (b) and (c) defined in Proposition \ref{Pro: equivalent star} is true.

		\item If $r(x,y)=0$ and $r(y,z)>0$, by \ref{Item: rxy=0} we have $d(x,y)>0$ and $\mu(x,y)=+\infty$. Hence, the inequality $\mu(x,z)<\mu(x,y)$ holds. Moreover, one has $r(x,z)=r(y,z)$ and therefore \eqref{Equ: degree additivity} leads to 
\[\mu(x,z)=\frac{d(x,z)}{r(x,z)}>\frac{d(y,z)}{r(x,z)}=\frac{d(y,z)}{r(y,z)}=\mu(y,z).\]

		\item If $r(y,z)=0$ and $r(x,y)>0$, we have $d(y,z)>0$ and $\mu(y,z)=+\infty$. Hence the inequality $\mu(x,z)<\mu(y,z)$ holds. Moreover, one has $r(x,z)=r(x,y)$ and hence \eqref{Equ: degree additivity} leads to   
\[\mu(x,z)=\frac{d(x,z)}{r(x,z)}>\frac{d(x,y)}{r(x,z)}=\frac{d(x,y)}{r(x,y)}=\mu(x,y).\]
		\end{itemize}

\item In the case where $r(x,z) =0$ then $r(x,y)=r(y,z)=0$ and one has: 
\[\mu(x,z)=\mu(x,y)=\mu(y,z)=+\infty.\]
\end{itemize}
\end{proof}

\subsection{Nash equilibrium and semi-stability}

We say that the Harder-Narasimhan game with pay-off function $\mu$ \emph{has Nash equilibrium} if $\mu_A^*=\mu_B^*$. In other words, it is equally beneficial for Alice or Bob to go first in the game. The value of $\mu_A^*=\mu_B^*$ is called the \emph{equilibrium pay-off} of the Harder-Narasimhan game.

In order to facilitate the comparison with the classic Harder-Narasimhan theory, we introduce the following notation.

\begin{enonce}{Notation}
\upshape For any $(x,y)\in P_{<}(\mathscr L)$, we denote by 
\[\mu_{\max}(x,y):=\sup_{\begin{subarray}{c}w\in\mathscr L\\
x<w\leqslant y
\end{subarray}}\mu(x,w),\quad \mu_{\min}(x,y):=\inf_{\begin{subarray}{c}w\in\mathscr L\\
x\leqslant w<y
\end{subarray}}\mu(w,y)\]
By definition, the following inequalities holds:
\[\mu_{\min}(x,y)\leqslant\mu(x,y)\leqslant\mu_{\max}(x,y).\]
For simplicity, $\mu_{\max}(\perp,y)$ is also denoted by $\mu_{\max}(y)$, and similarly $\mu_{\min}(\perp,y)$ is also denoted by $\mu_{\min}(y)$. 

\end{enonce}

\begin{rema}\phantomsection\label{Rem: mu A mu B star reinterpretation}
With the above notation, one can rewrite the optimal pay-offs $\mu_A^*$ and $\mu_B^*$ as
\begin{equation*}%
\mu_A^*=\inf_{x\in\mathscr L\setminus\{\top\}}\mu_{\max}(x,\top),\quad\mu_B^*=\sup_{y\in\mathscr L\setminus\{\perp\}}\mu_{\min}(y).
\end{equation*}
In particular, the inequality $\mu_B^*\leqslant\mu_A^*$ is equivalent to 
\[\forall\,x\in\mathscr L\setminus\{\top\},\;\forall\,y\in\mathscr L\setminus\{\perp\},\quad \mu_{\min}(y)\leqslant\mu_{\max}(x,\top).\]

If $\mu$ satisfies the conditions of Proposition \ref{Pro: triangle 1},  then one has $\mu_A^*=\mu_{\min}(\top)$, and Nash equilibrium condition can be rewritten as 
\[\forall\,y\in\mathscr L\setminus\{\perp\},\quad\mu_{\min}(y)\leqslant\mu_{\min}(\top).\]
Similarly, if $\mu$ satisfies the conditions of Proposition \ref{Pro: triangle 2}, then one has $\mu_B^*=\mu_{\max}(\top)$, and Nash equilibrium condition can be rewritten as 
\[\forall\,x\in\mathscr L\setminus\{\top\},\quad \mu_{\max}(\top)\leqslant\mu_{\max}(x,\top).\]
\end{rema}

\begin{prop}\phantomsection\label{Pro: semistable implies equilibrium1}
If 
$\mu_{\min}(\top)=\mu_{\max}(\top)$,
then the inequality $\mu_{B}^*\leqslant\mu_A^*$ holds. The converse is true if $\mu$ satisfies all conditions of Propositions \ref{Pro: triangle 1}  and \ref{Pro: triangle 2}.
\end{prop}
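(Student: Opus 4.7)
The plan is to handle both implications by combining the reformulations of $\mu_A^*$ and $\mu_B^*$ from Remark \ref{Rem: mu A mu B star reinterpretation} and Propositions \ref{Pro: triangle 1}--\ref{Pro: triangle 2} with the trivial observation that, since $\perp<\top$, the value $\mu(\perp,\top)$ appears both in the infimum defining $\mu_{\min}(\top)$ and in the supremum defining $\mu_{\max}(\top)$; consequently one always has
\[\mu_{\min}(\top)\leqslant\mu(\perp,\top)\leqslant\mu_{\max}(\top).\]
This sandwich will play the role of the ``other half'' of the inequality needed for the converse.

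For the first implication, I would invoke the criterion in Remark \ref{Rem: mu A mu B star reinterpretation}, which says that $\mu_B^*\leqslant\mu_A^*$ is equivalent to the inequality $\mu_{\min}(y)\leqslant\mu_{\max}(x,\top)$ for every $x\in\mathscr L\setminus\{\top\}$ and every $y\in\mathscr L\setminus\{\perp\}$. Under the hypothesis $\mu_{\min}(\top)=\mu_{\max}(\top)$ this is immediate from the chain
\[\mu_{\min}(y)\leqslant\mu(\perp,y)\leqslant\mu_{\max}(\top)=\mu_{\min}(\top)\leqslant\mu(x,\top)\leqslant\mu_{\max}(x,\top),\]
where the outer inequalities come from taking $w=\perp$ (resp.\ $w=\top$) in the infimum defining $\mu_{\min}(y)$ (resp.\ the supremum defining $\mu_{\max}(x,\top)$), and the inner inequalities express that $\mu(\perp,y)$ and $\mu(x,\top)$ are terms of the extremal families defining $\mu_{\max}(\top)$ and $\mu_{\min}(\top)$ respectively.

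For the converse, I would first apply Proposition \ref{Pro: triangle 1} to rewrite $\mu_A^*=\mu_{\min}(\top)$ and Proposition \ref{Pro: triangle 2} to rewrite $\mu_B^*=\mu_{\max}(\top)$. The assumed inequality $\mu_B^*\leqslant\mu_A^*$ then reads $\mu_{\max}(\top)\leqslant\mu_{\min}(\top)$, and combining this with the universal sandwich $\mu_{\min}(\top)\leqslant\mu_{\max}(\top)$ recalled at the outset forces the equality $\mu_{\min}(\top)=\mu_{\max}(\top)$.

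The whole argument is bookkeeping on the definitions, so I do not anticipate a real obstacle. The only point worth verifying is that each value entering the chains is genuinely picked out by the extremum it is being compared with; this uses the standing assumption $\perp\neq\top$, which guarantees that $\perp$ is an admissible witness in the infimum defining $\mu_{\min}(\top)$ and $\top$ is an admissible witness in the supremum defining $\mu_{\max}(\top)$, so that $(\perp,\top)\in P_<(\mathscr L)$ and $\mu(\perp,\top)$ is a bona fide term of both extremal families.
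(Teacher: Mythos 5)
Your proof is correct and follows essentially the same route as the paper: the forward direction is the chain $\mu_{\min}(y)\leqslant\mu(\perp,y)\leqslant\mu_{\max}(\top)=\mu_{\min}(\top)\leqslant\mu(x,\top)\leqslant\mu_{\max}(x,\top)$ combined with the criterion of Remark \ref{Rem: mu A mu B star reinterpretation}, and the converse uses the identifications $\mu_A^*=\mu_{\min}(\top)$, $\mu_B^*=\mu_{\max}(\top)$ from Propositions \ref{Pro: triangle 1} and \ref{Pro: triangle 2}. The only cosmetic difference is that you close the converse with the sandwich $\mu_{\min}(\top)\leqslant\mu(\perp,\top)\leqslant\mu_{\max}(\top)$ where the paper cites the inequality $\mu_A^*\leqslant\mu_B^*$ from those propositions; the two are the same inequality after the identifications.
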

\begin{proof}Since 
\[\mu_{\max}(\top):=\sup_{y\in\mathscr L\setminus\{\perp\}}\mu(\perp,y),\quad \mu_{\min}(\top):=\inf_{x\in\mathscr L\setminus\{\top\}}\mu(x,\top).\]
The equality $\mu_{\min}(\top)=\mu_{\max}(\top)$ is equivalent to 
\[\forall\,x\in\mathscr L\setminus\{\top\},\;\forall\,y\in\mathscr L\setminus\{\perp\},\quad \mu(\perp,y)\leqslant\mu(x,\top).\]
Joint with the inequalities $\mu_{\min}(y)\leqslant\mu(\perp,y)$ and $\mu(x,\top)\leqslant\mu_{\max}(x,\top)$, by 
Remark~\ref{Rem: mu A mu B star reinterpretation}, we obtain $\mu_B^*\leqslant\mu_A^*$.

In the case where $\mu$ satisfies the conditions of Propositions \ref{Pro: triangle 1} and \ref{Pro: triangle 2}, then one has \[\mu_{\min}(\top)=\mu_A^*\leqslant \mu_B^*=\mu_{\max}(\top).\]
Hence, if $\mu_B^* \leqslant \mu_A^*$, then the equality $\mu_{\min}(\top)=\mu_{\max}(\top)$ holds to be true.
\end{proof}

\begin{prop}\phantomsection\label{Pro: mu = mu max semistability}
Suppose that, for any $x\in\mathscr L\setminus\{\perp,\top\}$, \[\text{either $\mu(\perp,x)\not\leqslant\mu(\perp,\top)$, or $\mu(\perp,\top)\leqslant \mu(x,\top)$.}\]
If $\mu_{\max}(\top)=\mu(\perp,\top)$, then $\mu_{\min}(\top)=\mu_{\max}(\top)$.
\end{prop}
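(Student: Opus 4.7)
The plan is to show that $\mu_{\min}(\top)=\mu(\perp,\top)$, since we are already given $\mu_{\max}(\top)=\mu(\perp,\top)$.

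First I would observe that one inequality is immediate: because $\perp\in\mathscr L\setminus\{\top\}$ (we have $\perp\neq\top$ by assumption), the element $\mu(\perp,\top)$ appears in the infimum defining $\mu_{\min}(\top)=\inf_{x\in\mathscr L\setminus\{\top\}}\mu(x,\top)$, so $\mu_{\min}(\top)\leqslant\mu(\perp,\top)$.

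For the reverse inequality, I need to show that $\mu(\perp,\top)\leqslant\mu(x,\top)$ for every $x\in\mathscr L\setminus\{\top\}$. The case $x=\perp$ is trivial, so I may assume $x\in\mathscr L\setminus\{\perp,\top\}$. Here I invoke the dichotomy hypothesis: for such $x$, either $\mu(\perp,x)\not\leqslant\mu(\perp,\top)$ or $\mu(\perp,\top)\leqslant\mu(x,\top)$. The key step is to rule out the first alternative using $\mu_{\max}(\top)=\mu(\perp,\top)$: by definition $\mu_{\max}(\top)=\sup_{y\in\mathscr L\setminus\{\perp\}}\mu(\perp,y)$, so in particular $\mu(\perp,x)\leqslant\mu_{\max}(\top)=\mu(\perp,\top)$, contradicting the first alternative. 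Hence the second must hold, giving $\mu(\perp,\top)\leqslant\mu(x,\top)$ as required.

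Taking the infimum over $x\in\mathscr L\setminus\{\top\}$ then yields $\mu(\perp,\top)\leqslant\mu_{\min}(\top)$, and combining this with the reverse inequality already obtained gives $\mu_{\min}(\top)=\mu(\perp,\top)=\mu_{\max}(\top)$.

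There is no real obstacle in this argument, the proof is a direct unpacking of the definitions together with a case analysis using the dichotomy hypothesis; the only subtle point is remembering that $\perp$ itself is a legitimate value of $x$ in the infimum, which gives the easy direction for free, so that one only needs to bound $\mu(x,\top)$ from below for $x\neq\perp,\top$, which is exactly where the dichotomy applies.
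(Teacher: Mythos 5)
Your proof is correct and follows essentially the same route as the paper: use $\mu_{\max}(\top)=\mu(\perp,\top)$ to get $\mu(\perp,x)\leqslant\mu(\perp,\top)$, which rules out the first branch of the dichotomy, so the second branch gives $\mu(\perp,\top)\leqslant\mu(x,\top)$ for all $x$, whence $\mu_{\min}(\top)=\mu(\perp,\top)$. Your write-up merely makes explicit the easy inequality $\mu_{\min}(\top)\leqslant\mu(\perp,\top)$ coming from $x=\perp$, which the paper leaves implicit.
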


\begin{proof}
Let $x$ be an element of $\mathscr L\setminus\{\perp,\top\}$. Since $\mu_{\max}(\top)=\mu(\perp,\top)$, one has $\mu(\perp,x)\leqslant\mu(\perp,\top)$, and hence, by the hypothesis of the proposition, $\mu(\perp,\top)\leqslant\mu(x,\top)$. Hence $\mu_{\min}(\top)=\mu(\perp,\top)$.
\end{proof}

\begin{rema}
When $\mu$ is slope-like, for any $x\in\mathscr L\setminus\{\perp,\top\}$ the condition \[\text{either $\mu(\perp,x)\not\leqslant\mu(\perp,\top)$, or $\mu(\perp,\top)\leqslant \mu(x,\top)$}\] can be deduced from the point (3) of Definition \ref{Def: etoile gras} applied to $\operatorname{\perp} \leqslant x \leqslant \top$.
\end{rema}

\begin{prop}\phantomsection\label{Pro: mu = mu max semistability1}
Suppose that, for any $x\in\mathscr L\setminus\{\perp,\top\}$,
\[\text{either $\mu(\perp,x)\leqslant\mu(\perp,\top)$, or $\mu(\perp,\top)\not\leqslant \mu(x,\top)$.}\]
If $\mu_{\min}(\top)=\mu(\perp,\top)$, then $\mu_{\max}(\top)=\mu_{\min}(\top)$.
\end{prop}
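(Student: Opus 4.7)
The plan is to mirror the argument of Proposition~\ref{Pro: mu = mu max semistability} under the dual hypothesis. Since $\mu_{\max}(\top)\geqslant\mu(\perp,\top)=\mu_{\min}(\top)$ holds trivially (the supremum defining $\mu_{\max}(\top)$ includes the term $\mu(\perp,\top)$), it suffices to establish the reverse inequality $\mu_{\max}(\top)\leqslant\mu(\perp,\top)$, i.e. that $\mu(\perp,x)\leqslant\mu(\perp,\top)$ for every $x\in\mathscr L\setminus\{\perp\}$.

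First I would dispose of the boundary case $x=\top$, which is immediate. For an arbitrary $x\in\mathscr L\setminus\{\perp,\top\}$, I would invoke the assumption $\mu_{\min}(\top)=\mu(\perp,\top)$: unfolding the definition
\[\mu_{\min}(\top)=\inf_{\begin{subarray}{c}w\in\mathscr L\\ \perp\leqslant w<\top\end{subarray}}\mu(w,\top),\]
the element $w=x$ yields $\mu(\perp,\top)=\mu_{\min}(\top)\leqslant\mu(x,\top)$. In particular $\mu(\perp,\top)\leqslant\mu(x,\top)$, so the second alternative of the dichotomy in the hypothesis fails for this $x$. Therefore the first alternative must hold, giving $\mu(\perp,x)\leqslant\mu(\perp,\top)$.

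Taking the supremum over $x\in\mathscr L\setminus\{\perp\}$ yields $\mu_{\max}(\top)\leqslant\mu(\perp,\top)$, and combined with the trivial reverse inequality one concludes $\mu_{\max}(\top)=\mu(\perp,\top)=\mu_{\min}(\top)$. There is no substantive obstacle here: the statement is the exact dual of Proposition~\ref{Pro: mu = mu max semistability}, the only mild care being to treat $x=\top$ separately (so that the dichotomy assumption, which is stated only for $x\neq\top$, is applicable) and to note that $\not\leqslant$ in the hypothesis is contradicted by the inequality $\mu(\perp,\top)\leqslant\mu(x,\top)$ coming from the definition of $\mu_{\min}(\top)$.
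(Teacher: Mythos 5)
Your proof is correct and is essentially the paper's argument: the paper disposes of this proposition in one line by passing to the dual Harder-Narasimhan game and invoking Proposition~\ref{Pro: mu = mu max semistability}, and what you have written is exactly that dual argument unwound explicitly (use $\mu_{\min}(\top)=\mu(\perp,\top)$ to rule out the second alternative of the dichotomy, then conclude $\mu(\perp,x)\leqslant\mu(\perp,\top)$ for all $x$). The only presentational difference is that you spell out the boundary case $x=\top$ and the trivial inequality $\mu_{\max}(\top)\geqslant\mu(\perp,\top)$, which the paper leaves implicit.
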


\begin{proof}
The statement follows from Proposition \ref{Pro: mu = mu max semistability} by passing to the dual Harder-Narasimhan game.
\end{proof}

\begin{rema}
When $\mu$ is slope-like, for any $x\in\mathscr L\setminus\{\perp,\top\}$ the condition \[\text{either $\mu(\perp,x)\leqslant\mu(\perp,\top)$, or $\mu(\perp,\top)\not\leqslant \mu(x,\top)$.}\] can be deduced from the point (1) of Definition \ref{Def: etoile gras} applied to $\operatorname{\perp} \leqslant x \leqslant \top$.
\end{rema}

\begin{prop}\label{Pro: semistable and equilibrium}
Suppose that the pay-off function $\mu$ is slope-like. 
\begin{enumerate}[label=\rm(\arabic*)]
\item The following statements are equivalent:
\begin{enumerate}[label=\rm(\alph*)]
\item $\mu_{\max}(\top)=\mu(\perp,\top)$,
\item $\mu_{\min}(\top)=\mu(\perp,\top)$, 
\item $\mu_{\min}(\top)=\mu_{\max}(\top)$.
\end{enumerate}
\item If in addition $(\mathscr L,\leqslant)$ satisfies the conditions \ref{Item: condition 1} and \ref{Item: tilde 1} of Propositions \textup{\ref{Pro: triangle 1}} and \textup{\ref{Pro: triangle 2}}, then these conditions are also equivalent to 
\begin{enumerate}[label=\rm(\alph*)]
\setcounter{enumii}{3}
\item the Harder-Narasimhan game of pay-off function $\mu$ has Nash equilibrium.
\end{enumerate}
\end{enumerate}
\end{prop}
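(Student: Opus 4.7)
The plan is to reduce both assertions to results already established in the paper.

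For (1), I would first record the elementary chain
\[
\mu_{\min}(\top)\leqslant \mu(\perp,\top)\leqslant \mu_{\max}(\top),
\]
obtained by taking $w=\perp$ in the infimum defining $\mu_{\min}(\top)$ and $w=\top$ in the supremum defining $\mu_{\max}(\top)$. From this, the implications (c)$\Rightarrow$(a) and (c)$\Rightarrow$(b) are forced by squeeze. For the two converses (a)$\Rightarrow$(c) and (b)$\Rightarrow$(c), I would invoke Propositions \ref{Pro: mu = mu max semistability} and \ref{Pro: mu = mu max semistability1} respectively. The dichotomy hypotheses of those propositions are, as pointed out in the remarks that follow them, automatic consequences of $\mu$ being slope-like: they are instances of points (3) and (1) of Definition \ref{Def: etoile gras} applied to the triple $\perp<x<\top$. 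So no additional work is required beyond citing what has been proven.

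For (2), the key step is to verify that slope-likeness automatically supplies conditions (2) of Proposition \ref{Pro: triangle 1} and ($\widetilde{2}$) of Proposition \ref{Pro: triangle 2}. Both follow directly from Proposition \ref{Pro: equivalent star}: for any chain $x<y<\top$, the ordered triple $(\mu(x,y),\mu(x,\top),\mu(y,\top))$ falls into one of the three monotone cases (a), (b), (c) of that proposition, and in each of them either $\mu(x,y)\leqslant \mu(x,\top)$ or $\mu(y,\top)\leqslant\mu(x,\top)$ holds; the mirror argument on the triple $\perp<x<y$ handles condition ($\widetilde{2}$). Once this verification is in place, the standing hypotheses \ref{Item: condition 1} and \ref{Item: tilde 1} assumed in part (2) allow the full conclusions of Propositions \ref{Pro: triangle 1} and \ref{Pro: triangle 2} to be invoked, giving the identifications
\[
\mu_A^*=\mu_{\min}(\top),\qquad \mu_B^*=\mu_{\max}(\top).
\]
Condition (c) therefore reads exactly $\mu_A^*=\mu_B^*$, which is by definition the Nash equilibrium condition (d); this establishes (c)$\Leftrightarrow$(d) and completes part (2).

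The main — and only — obstacle is the careful bookkeeping of strict versus non-strict inequalities when translating the trichotomy of Proposition \ref{Pro: equivalent star} into the various dichotomies used as hypotheses in Propositions \ref{Pro: triangle 1}, \ref{Pro: triangle 2}, \ref{Pro: mu = mu max semistability}, and \ref{Pro: mu = mu max semistability1}. No genuinely new argument is needed; everything reduces to assembling previously proved statements.
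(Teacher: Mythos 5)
Your proof is correct and follows essentially the same route as the paper: the implications (a)$\Rightarrow$(c) and (b)$\Rightarrow$(c) via Propositions \ref{Pro: mu = mu max semistability} and \ref{Pro: mu = mu max semistability1} (whose hypotheses are supplied by slope-likeness, as the paper's remarks note), and the equivalence with Nash equilibrium via the identifications $\mu_A^*=\mu_{\min}(\top)$ and $\mu_B^*=\mu_{\max}(\top)$ coming from Propositions \ref{Pro: triangle 1} and \ref{Pro: triangle 2}. Your only deviations are cosmetic: you obtain (c)$\Rightarrow$(a),(b) by the elementary squeeze $\mu_{\min}(\top)\leqslant\mu(\perp,\top)\leqslant\mu_{\max}(\top)$ rather than by citing Proposition \ref{Pro: equivalent star}, and you inline the content of Proposition \ref{Pro: semistable implies equilibrium1} instead of citing it.
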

\begin{proof}
The implications (a) $\Longrightarrow$ (c) and (b) $\Longrightarrow$ (c) are consequences respectively of Propositions \ref{Pro: mu = mu max semistability} and \ref{Pro: mu = mu max semistability1}. The reverse implications (c) $\Longrightarrow$ (b) and (c) $\Longrightarrow$ (a) come from Proposition \ref{Pro: equivalent star} because $\mu$ is slope-like. The equivalence to (d) under the conditions \ref{Item: condition 1} and \ref{Item: tilde 1} of Propositions \textup{\ref{Pro: triangle 1}} and \textup{\ref{Pro: triangle 2}} is a consequence of Proposition \ref{Pro: semistable implies equilibrium1}. 
\end{proof}

\begin{exem}
Consider the Harder-Narasimhan game described in \S\ref{Sec: wdg} and illustrated by the following edge-weighted oriented graph.
\begin{equation*}\xymatrix{\relax\perp\ar@/_1pc/[rr]_-{c}\ar[r]^-a&x\ar[r]^-b&\top}\end{equation*}
We assume that $(S,\leqslant)$ is totally ordered. As explained in \S\ref{Sec: wdg}, the Harder-Narasimhan game is semi-stable if and only if $a\leqslant b$. 
Note that
\begin{enumerate}[label=\rm(\arabic*)] 
\item $\mu_{\min}(\top)=b\wedge c$ is equal to $\mu(\perp,\top)=c$ if and only if $c\leqslant b$, 
\item $\mu_{\max}(\top)={a}\vee c$ is equal to $\mu(\perp,\top)=c$ if and only if $a\leqslant c$,
\item $\mu_{\min}(\top)= b\wedge c$ is equal to $ \mu_{\max}(\top) = a\vee c$ if and only if $a\leqslant c\leqslant b$. 
\item the inequality $\mu_A^*\leqslant\mu_B^*$ always holds\footnote{In fact, if $a\leqslant c$, then
$\mu_A^*=c\wedge b\leqslant \mu_B^*=a\vee(b\wedge c)$.
If $c\leqslant a$, then $b\wedge c\leqslant a$ and hence
$\mu_A^*=a\wedge b\leqslant a=\mu_B^*$.
}, and the Harder-Narasimhan game has Nash equilibrium if and only if $a\leqslant b$\footnote{ Clearly the inequality $\mu_B^*\leqslant\mu_A^*$ implies $a\leqslant b$. Conversely, if $a\leqslant b$, then $a\leqslant a\vee c$, $b\wedge c\leqslant b$ and $b\wedge c\leqslant a\vee c$. Hence the inequality $\mu_B^*\leqslant\mu_A^*$ holds.
}. 
\end{enumerate} 
\end{exem}

\begin{prop}\label{Pro: mu B mu A and equilibrium}
Assume that $(S,\leqslant)$ is a totally ordered set. If the Harder-Narasimhan game is semi-stable, then one has $\mu_B^*\leqslant\mu_A^*$. In particular, in the case where the conditions of Proposition \textup{\ref{Pro: triangle 1}} are satisfied or the conditions of Proposition \textup{\ref{Pro: triangle 2}} are satisfied, the Harder-Narasimhan game has Nash equilibrium.
\end{prop}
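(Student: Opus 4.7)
The plan is direct once the semi-stability condition is unfolded. Recall from the definitions that $\mu_A^* = \mu_A(\perp,\top) = \mu_A(\top)$, and that semi-stability of the game (Definition \ref{Def: semistability game}) means $\mu_A(y) \not> \mu_A(\top)$ for every $y \in \mathscr L \setminus \{\perp\}$. Since $(S,\leqslant)$ is totally ordered, this translates into the genuine inequality $\mu_A(y) \leqslant \mu_A^*$ for all such $y$. Moreover, by Remark \ref{Rem: mu A mu B star reinterpretation}, one has
\[\mu_B^* = \sup_{y \in \mathscr L \setminus \{\perp\}} \mu_{\min}(y),\]
so it is enough to show $\mu_{\min}(y) \leqslant \mu_A^*$ for every $y \in \mathscr L \setminus \{\perp\}$.

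The key observation is that, for any $a \in \mathscr L$ with $\perp \leqslant a < y$, the value $\mu(a,y)$ appears among the quantities in the supremum defining $\mu_{\max}(a,y)$ (take $b=y$). Hence $\mu(a,y) \leqslant \mu_{\max}(a,y)$, and taking the infimum over such $a$ yields
\[\mu_{\min}(y) = \inf_{\perp \leqslant a < y} \mu(a,y) \leqslant \inf_{\perp \leqslant a < y} \mu_{\max}(a,y) = \mu_A(y).\]
Combining with the semi-stability inequality gives $\mu_{\min}(y) \leqslant \mu_A(y) \leqslant \mu_A^*$ for every $y \in \mathscr L \setminus \{\perp\}$, and passing to the supremum over $y$ yields $\mu_B^* \leqslant \mu_A^*$, as claimed.

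For the final assertion, under the hypotheses of Proposition \ref{Pro: triangle 1} or of Proposition \ref{Pro: triangle 2} one has already established the reverse inequality $\mu_A^* \leqslant \mu_B^*$. Combining this with the inequality just proved gives the equality $\mu_A^* = \mu_B^*$, which by definition is precisely the Nash equilibrium condition for the Harder-Narasimhan game. I do not foresee any serious obstacle: the only delicate points are to translate semi-stability into a genuine $\leqslant$-inequality via the total order on $S$, and to note that $\mu_{\min}(y)$ is bounded above by $\mu_A(y)$ because the latter is an infimum of suprema in which $\mu(a,y)$ itself occurs.
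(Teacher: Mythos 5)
Your proof is correct and follows essentially the same route as the paper's: the chain $\mu_{\min}(y)\leqslant\mu_A(y)$ (from $\mu(a,y)\leqslant\mu_{\max}(a,y)$), the passage to suprema giving $\mu_B^*\leqslant\sup_y\mu_A(y)$, and the use of total order to turn the semi-stability condition $\mu_A(y)\not>\mu_A(\top)$ into $\mu_A(y)\leqslant\mu_A^*$. The final step combining with Propositions \ref{Pro: triangle 1} and \ref{Pro: triangle 2} also matches the paper's argument.
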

\begin{proof}
For any $x\in\mathscr L\setminus\{\perp\}$, one has
\[\mu_A(x)=\inf_{\begin{subarray}{c}
a\in\mathscr L\\
a<x
\end{subarray}}\sup_{\begin{subarray}{c}
b\in\mathscr L\\
a<b\leqslant x
\end{subarray}}\mu(a,b)\geqslant\inf_{\begin{subarray}{c}
a\in\mathscr L\\
a<x
\end{subarray}}\mu(a,x)=\mu_{\min}(x).\]
Taking the supremum with respect to $x$ one therefore has
\[\mu_B^*=\sup_{x\in\mathscr L\setminus\{\perp\}}\mu_{\min}(x)\leqslant\sup_{x\in\mathscr L\setminus\{\perp\}}\mu_A(x).\]
If the Harder-Narasimhan game is semi-stable, then 
\[\sup_{x\in\mathscr L\setminus\{\perp\}}\mu_A(x)\leqslant \mu_A^*.\]
The proposition is thus proved.
\end{proof}

The following simple example shows that even when the pay-off function of the Harder-Narasimhan game has values in a totally ordered set $(S, \leqslant)$, the semi-stability of the game is not equivalent to have the inequality $\mu_A^*\leqslant\mu_B^*$.In other words the inequality $\mu_A^*\leqslant\mu_B^*$ may fail without presuming anything about the semi-stability of the game.
\begin{exem}
Consider the following weighted and directed graph 
\[\xymatrix{\relax \perp\ar[r]^-a\ar@/_1pc/[rr]_-{d}\ar@/_2.5pc/[rrr]_-{f}&x\ar[r]^-{b}\ar@/^1.5pc/[rr]^-e&y\ar[r]^-{c}&\top}\]
where $a$, $b$, $c$, $d$, $e$ and $f$ take value in a totally ordered set $(S,\leqslant)$. 
By definition, one has 
\begin{gather*}\mu_A^*=(a\vee d\vee f)\wedge (b\vee e)\wedge c,\quad
\mu_B^*=a\vee(b\wedge d)\vee(c\wedge e\wedge f).\end{gather*}
In the case where $a$, $b$, $c$, $d$, $e$ and $f$ satisfy
\begin{equation}\label{Equ: mu B smaller than mu A}b<a<e<c,\quad f<a<d,\end{equation}
one has 
\[\mu_A^*=d\wedge e\wedge c=d\wedge e,\quad \mu_B^*=a\vee b\vee f=a.\]
Hence $\mu_B^*<\mu_A^*$.

Let us consider the semi-stability of this Harder-Narasimhan game. By definition, 
\begin{gather*}\mu_A(\top)=(a\vee d\vee f)\wedge(b\vee e)\wedge c,\\
\mu_A(y)=(a\vee d)\wedge b,\quad \mu_A(x)=a.
\end{gather*}
Under the assumption \eqref{Equ: mu B smaller than mu A}, one has 
\[\mu_A(\top)=d\wedge e,\quad \mu_A(y)=b<\mu_A(\top),\quad \mu_A(x)=a<\mu_A(\top).\] Hence the Harder-Narasimhan game is semi-stable.
\end{exem} 

\begin{prop}\label{Pro: equilibrium implies semistable}
Assume that, for any $x\in\mathscr L\setminus\{\perp\}$, the function $\mu_{[\perp,x]}$ satisfies the conditions of Proposition \textup{\ref{Pro: triangle 1}}. If the Harder-Narasimhan game has Nash equilibrium, then it is semi-stable.
\end{prop}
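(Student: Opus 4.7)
The plan is to translate Nash equilibrium, expressed as the equality $\mu_A^*=\mu_B^*$, into the semi-stability condition $\mu_A(y)\not>\mu_A(\top)$ for all $y\in\mathscr L\setminus\{\perp\}$, by using the hypothesis on $\mu_{[\perp,x]}$ to identify $\mu_A(x)$ with $\mu_{\min}(x)$ for each $x$.

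First I would apply Proposition \ref{Pro: triangle 1} inside the interval $\mathscr L_{[\perp,x]}$: since by hypothesis $\mu_{[\perp,x]}$ satisfies the conditions (1) and (2) of that proposition (with $x$ playing the role of the greatest element), its formula \eqref{Equ: mu A star1} yields
\[
\mu_A(x) \;=\; (\mu_{[\perp,x]})_A^{*} \;=\; \inf_{\begin{subarray}{c} a\in\mathscr L\\ \perp\leqslant a<x\end{subarray}} \mu(a,x) \;=\; \mu_{\min}(x).
\]
This is the key identification: the optimal first-move pay-off for Alice on $[\perp,x]$ coincides with the minimum slope $\mu_{\min}(x)$.

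Next I would use Remark \ref{Rem: mu A mu B star reinterpretation}, which gives
\[
\mu_A^{*} \;=\; \mu_A(\top), \qquad \mu_B^{*} \;=\; \sup_{y\in\mathscr L\setminus\{\perp\}} \mu_{\min}(y).
\]
Substituting the identity $\mu_{\min}(y)=\mu_A(y)$ established above (valid for every $y\in\mathscr L\setminus\{\perp\}$, applying the hypothesis to the interval $\mathscr L_{[\perp,y]}$), the Nash equilibrium condition $\mu_A^{*}=\mu_B^{*}$ becomes
\[
\mu_A(\top) \;=\; \sup_{y\in\mathscr L\setminus\{\perp\}} \mu_A(y).
\]

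Finally, this equality immediately yields $\mu_A(y)\leqslant\mu_A(\top)$ for every $y\in\mathscr L\setminus\{\perp\}$, and in particular $\mu_A(y)\not>\mu_A(\top)$, which is precisely the semi-stability condition (condition (S1) at $x=\top$, condition (S2) being trivial for $\top$). I do not anticipate a genuine obstacle: the entire argument is a bookkeeping exercise once Proposition \ref{Pro: triangle 1} is invoked on each subinterval $[\perp,x]$. The only point to be careful about is to verify that the hypothesis is indeed applied to each $\mu_{[\perp,y]}$ before using $\mu_A(y)=\mu_{\min}(y)$ in the supremum defining $\mu_B^{*}$, which is automatic from the blanket assumption of the proposition.
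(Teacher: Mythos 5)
Your proposal is correct and follows essentially the same route as the paper's proof: apply Proposition \ref{Pro: triangle 1} on each interval $\mathscr L_{[\perp,x]}$ to get $\mu_A(x)=\mu_{\min}(x)$, take the supremum to identify $\mu_B^*$ with $\sup_{x}\mu_A(x)$, and conclude from $\mu_A^*=\mu_B^*=\mu_A(\top)$ that $\mu_A(x)\leqslant\mu_A(\top)$ for all $x$.
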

\begin{proof}
By Proposition \ref{Pro: triangle 1}, one has 
\[\forall\,x\in\mathscr L\setminus\{\perp\},\quad \mu_A(x)=\mu_{\min}(x).\]
Taking the supremum with respect to $x$, we obtain
\[\sup_{x\in\mathscr L\setminus\{\perp\}}\mu_A(x)=\mu_B^*.\]
Therefore, if $\mu_B^*=\mu_A^*=\mu_A(\top)$, then, for any $x\in \mathscr L\setminus\{\perp\}$, one has $\mu_A(x)\leqslant\mu_A(\top)$. Hence the Harder-Narasimhan game is semi-stable.
\end{proof}

If we combine the statements of Propositions \ref{Pro: semistable and equilibrium}, \ref{Pro: mu B mu A and equilibrium} and \ref{Pro: equilibrium implies semistable}, we obtain the following result. 
\begin{theo}\label{Thm: equivalence of stability conditions}
Assume that the following conditions are satisfied:
\begin{enumerate}[label=\rm(\roman*)]
\item  for any ascending chain 
$x_0< x_1<\ldots< x_n< x_{n+1}<\ldots$
of elements of $\mathscr L$, 
there exists $N\in\mathbb N$ such that  
$\mu(x_N,x_{N+1})\leqslant\mu(x_{N},\top)$ \textup{(}condition \ref{Item: condition 1} of Proposition \ref{Pro: triangle 1}\textup{)};
\item 
for any descending chain 
$x_0> x_1>\ldots> x_n>x_{n+1}>\ldots$
of elements of $\mathscr L$, there exists $N\in\mathbb N$ such that
$\mu(\perp,x_N)\leqslant\mu(x_{N+1},x_N)$ \textup{(}condition \ref{Item: tilde 1} of Proposition \ref{Pro: triangle 2}\textup{)};
\item the complete lattice is totally ordered and the pay-off function $\mu$ is slope-like \textup{(}in particular, the conditions \ref{Item: condition 2} and \ref{Item: tilde 2} of Propositions \ref{Pro: triangle 1} and \ref{Pro: triangle 2} are satisfied by $\mu$ and also by its restrictions\textup{)}.
\end{enumerate}
Then the following statements are equivalent:
\begin{enumerate}[label=\rm(\alph*)]
\item $\mu_{\max}(\top)=\mu(\perp,\top)$;
\item $\mu_{\min}(\top)=\mu(\perp,\top)$;
\item $\mu_{\min}(\top)=\mu_{\max}(\top)$;
\item the Harder-Narasimhan game of pay-off function $\mu$ has Nash equilibrium;
\item the Harder-Narasimhan game of pay-off function $\mu$ is semi-stable.
\end{enumerate}
\end{theo}

\begin{exem}\label{Ex: classical HN game}
Consider the Harder-Narasimhan game associated with a non-zero vector bundle $E$ on a regular projective curve $C$, as described in the beginning of the section. Note that the pay-off function of this Harder-Narasimhan game is slope-like, as shown by Proposition \ref{Pro: degree function}. Moreover, the bounded lattice $\mathscr L_E$ satisfies the ascending chain condition. Therefore, the conditions \ref{Item: condition 1} and \ref{Item: condition 2} of Proposition \ref{Pro: triangle 1} are satisfied. In particular, one has $\mu_A(E)=\mu_{\min}(E)$.
By Remark \ref{Rem: strong descending chain condition}, the condition \ref{Item: tilde 1} of Proposition \ref{Pro: triangle 2} is also satisfied. Therefore, by Theorem \ref{Thm: equivalence of stability conditions}, we obtain that the following statements are equivalent:
\begin{enumerate}[label=\rm(\alph*)]
\item The vector bundle $E$ is semi-stable in the usual sense,  namely \[\mu(E)=\mu_{\max}(E):=\sup_{0\neq F\subseteq E}\mu(F);\]
\item $\mu(E)=\mu_{\min}(E)$;
\item $\mu_{\max}(E)=\mu_{\min}$(E);
\item The Harder-Narasimhan game has Nash equilibrium;
\item The Harder-Narasimhan game is semi-stable, namely, for any non-zero vector subbundle $F$ of $E$, one has
$\mu_{\min}(F)\leqslant\mu_{\min}(E)$.
\end{enumerate}

\end{exem}

\begin{exem}

Let $\mathbb R[T]$ be the real vector space of polynomials of one variable $T$. We equip $\mathbb R[T]$ with the order $\leqslant$ defined as follows:
\[P\leqslant Q \Longleftrightarrow \exists\,N\in\mathbb N,\;\forall\,n\in\mathbb N_{\geqslant N},\; P(n)\leqslant Q(n). \]
Note that $P<Q$ if and only if the leading coefficient of the polynomial $Q-P$ is positive. Hence it is a total order on $\mathbb R[T]$. Moreover, $\mathbb R[T]$ is a totally ordered vector space over $\mathbb R$. We denote by $(S,\leqslant)$ the Dedekind-MacNeille completion of $(\mathbb R[T],\leqslant)$\textcolor{red}{.}

Let $k$ be a field, $X$ be a projective scheme over $\Spec k$ and $d$ be the Krull dimension of $X$. We fix an ample line bundle $\mathcal O_X(1)$ on $X$. If $E$ is a coherent $\mathcal O_X$-module, we denote by $H_E$ the Hilbert polynomial of $E$, which is a polynomial in $\mathbb R[T]$ such that 
\[\forall\,n\in\mathbb N_{\geqslant 1},\quad P_E(n)=\chi(E\otimes\mathcal O_X(n)).\]
We denote by $r_E$ the coefficient of $T^d$ in the polynomial $H_E$. Note that $r_E$ is always non-negative (it is positive precisely when the support of $E$ is of dimension $d$). If 
\[\xymatrix{0\ar[r]&E'\ar[r]&E\ar[r]&E''\ar[r]&0}\]
is a short exact sequence of coherent $\mathcal O_X$-modules, then the following equalities hold:
\[r_{E}=r_{E'}+r_{E''},\quad H_E=H_{E'}+H_{E''}.\]

We fix a non-zero coherent $\mathcal O_X$-module $E$ and denote by $\mathscr L_E$ the set of coherent sub-$\mathcal O_X$-module of $E$ equipped with the order of inclusion. Then $(\mathscr L_E,\subseteq)$ forms a bounded lattice. Moreover, the map
\[\mu:P_{\subseteq}(\mathscr L_E)\longrightarrow S,\quad \mu(F,E):=\begin{cases}
r_{E/F}^{-1}Q_{E/F},&r_{E/F}>0,\\
+\infty,&\text{else},
\end{cases}\]
defines a pay-off function of a Harder-Narasimhan game. By Proposition \ref{Pro: degree function}, the function $\mu$ is slope-like. By Theorem \ref{Thm: equivalence of stability conditions}, we obtain that the Nash equilibrium of this Harder-Narasimhan game is equivalent to the Gieseker semi-stability of the coherent sheaf $E$ with respect to the polarisation $\mathcal O_X(1)$ (see \cite[\S1.2]{MR2665168}).

\end{exem}

\begin{exem}
Let $G$ be a reductive group scheme over a regular projective curve $C$. Note that the pay-off function of the game defined in section \ref{Sec: red gps over curve} is not slope-like. One could therefore seek for a Harder-Narasimhan game with slope-like pay-off function that would better take into account the fact the underlying vector bundle setting (as all this discussion is possible because the Lie algebra of an algebraic group over a curve is in particular a vector bundle). One can therefore consider the Harder-Narasimhan game associated with the Lie algebra $\mathfrak{g}$ of $G$ on the regular projective curve $C$, as described in the beginning of the section. Note that if this game is semi-stable then by Theorem \ref{Thm: equivalence of stability conditions} for any vector subbundle $F \subseteq \mathfrak{g}$ one has $\mu(F) \leq \sup_{0\subsetneq F\subseteq \mathfrak{g}}\mu(F) = \mu(\mathfrak{g}) =0$. In particular, any parabolic subgroup of $G$ has negative degree, so if this slope-like game is semistable so is the game defined in Section \ref{Sec: red gps over curve}.

If now this slope-like game is not semi-stable, let 
\[0= E_0 \subseteq E_{1} \subseteq \cdots \subseteq E_{n-1} \subseteq \mathfrak{g}\]
be the Harder-Narasimhan filtration associated to the game. Then from the properties of the Harder-Narasimhan filtration together with the example \ref{Ex: classical HN game} one deduces that $\mu_A(\mathfrak{g}) \leqslant 0$, $\mu_A(E_1) \geqslant 0$. Indeed if $0 < \mu_A(\mathfrak{g})$ the slope $\mu(E_{n-1}, \mathfrak{g})$ is strictly positive, so $\deg(E_{n-1})$ is strictly negative. Moreover, as $\mu_A(E_{i-1}, E_{i})= \mu(E_{i-1}, E_{i}) > \mu_A(E_{i}, E_{i+1}) =  \mu(E_{i}, E_{i+1})$ for any $i \in \{1, \ldots, n-1\}$ the slopes $\mu(E_{i}, E_{i+1})$ are strictly positive, therefore the $\deg(E_i)$'s are strictly negative for $i \in \{1, \ldots, n-1\}$. But then (a) tells us that any subbundle of $\mathfrak{g}$ is of negative degree, so $\mathfrak{g}$ is semistable, hence a contradiction. On the same spirit, if $\mu_A(E_1) < 0$ then $\deg(E_1) <0$, for any $i \in \{1, \ldots, n\}$ one has $\deg(E_i) < \deg(E_{i-1})$, leading to a contradiction as $\deg(\mathfrak{g}) = 0$.
%

\end{exem}

\subsection{Jordan-Hölder filtration}

In this subsection, we consider a Harder-Narasimhan game which is known to be semi-stable and we study its stable filtrations. Let $(\mathscr L,\leqslant)$ be a bounded lattice and $\mu$ be a mapping from $P_{<}(\mathscr L)$ to a complete lattice $(S,\leqslant)$, which defines a Harder-Narasimhan game. We assume the following:
\begin{enumerate}[label=\rm(\roman*)]
\item the bounded lattice $(\mathscr L,\leqslant)$ satisfies the ascending chain condition (in particular, the condition \ref{Item: condition 1} of Proposition \ref{Pro: triangle 1} is satisfied);
\item\label{Item: stronger decreasing condition} for any descending chain
\[x_0>x_1>\ldots>x_n>x_{n+1}>\ldots\]
of elements of $\mathscr L$, there exists $N\in\mathbb N$ such that $\mu(x_{N+1},x_N)=+\infty$ (in particular, condition \ref{Item: tilde 1} of Proposition \ref{Pro: triangle 2} is satisfied);
\item $(S,\leqslant)$ is a totally ordered set and the function $\mu$ is slope-like;
\item the Harder-Narasimhan game is semi-stable, namely (see Theorem \ref{Thm: equivalence of stability conditions})
\[\forall\,x\in\mathscr L\setminus\{\perp\},\quad \mu(\perp,x)\leqslant\mu(\perp,\top).\]
\end{enumerate}

\begin{theo}
Assume that $\mu(\perp,\top)\neq+\infty$. There exists a sequence  
\begin{equation}\label{Equ:JH filtration}\operatorname{\top}=y_0>y_1>\ldots>y_n=\operatorname{\perp}\end{equation}
such that, for any $i\in\{1,\ldots,n\}$, $\mu(y_{i},y_{i-1})=\mu(\perp,\top)$ and
\[\forall\,z\in\mathscr L\text{ such that $y_{i}<z<y_{i-1}$},\; \mu(y_{i},z)<\mu(y_{i},y_{i-1}).\]
\end{theo}
\begin{proof}
If for any $x\in\mathscr L\setminus\{\perp,\top\}$ one has $\mu(\perp,x)<\mu(\perp,\top)$, the choice of $n=1$ and $(y_1,y_0)=(\perp,\top)$ satisfies the required condition. Otherwise the set 
\[\big\{x\in\mathscr L\setminus\{\perp,\top\}\,:\,\mu(\perp,x)=\mu(\perp,\top)\big\}\]
is not empty and by the ascending chain condition we could pick a maximal element of this set and let $y_1$ be this element. By Proposition \ref{Pro: equivalent star}, one has $\mu(y_1,\top)=\mu(\perp,\top)$. Moreover, for any $x\in\mathscr L$ such that $y_1<x<\top$, one has $\mu(\perp,x)<\mu(\perp,\top)=\mu(\perp,y_1)$. Hence, by Proposition \ref{Pro: equivalent star} one has 
\[\mu(y_1,x)<\mu(\perp,x)<\mu(\perp,y_1)=\mu(y_1,\top).\]
Iterating this procedure we obtain a decreasing sequence
\[\top=y_0>y_1>\ldots\]
such that $\mu(y_i,y_{i-1})=\mu(\perp,\top)$ and 
\[\forall\,z\in\mathscr L\text{ such that $y_{i}<z<y_{i-1}$},\; \mu(y_{i},z)<\mu(y_{i},y_{i-1})\]
for any $i$. This procedure terminates within finitely many steps since otherwise by the condition \ref{Item: stronger decreasing condition} there would exist $N\in\mathbb N$ such that $\mu(y_{N+1},y_{N})=+\infty$, which contradicts the hypothesis $\mu(\perp,\top)<+\infty$. 
\end{proof}

\begin{rema}
The sequence \eqref{Equ:JH filtration} in the above theorem is called a Jordan-Hölder filtration of the Harder-Narasimhan game. It is usually not unique. In the case where the pay-off function $\mu$ is affine, namely for any $(a,b)\in\mathscr L^2$ such that $a\not\leqslant b$ one has
\[\mu(a\wedge b,a)=\mu(b,a\vee b),\] it can be shown that all Jordan-Hölder filtrations have the same length. Suppose that 
\[\top=x_0>x_1>\ldots>x_m=\operatorname{\perp}\]
is another Jordan-Hölder filtration of the Harder-Narasimhan game (with an affine pay-off function). Let $i\in\{1,\ldots,n\}$ be the largest index such that $x_{m-1}\leqslant y_{i-1}$. Since $x_{m-1}\not\leqslant y_i$, one has 
\begin{equation}\label{Equ:jh}\mu(\perp,\top)=\mu(\perp,x_{m-1})\leqslant\mu(x_{m-1}\wedge y_i,x_{m-1})=\mu(y_i,x_{m-1}\vee y_i).\end{equation} 
Since $y_i<x_{m-1}\vee y_i\leqslant y_{i-1}$, we obtain that $x_{m-1}\vee y_i=y_{i-1}$ since otherwise \[\mu(y_i,x_{m-1}\vee y_i)<\mu(y_{i},y_{i-1})=\mu(\perp,\top),\] which leads to a contradiction.

For any $j\in\{1,\ldots,n\}$, by Proposition \ref{Pro: mu A star inequality} one has 
\[\mu(\perp,\top)\geqslant\mu(\perp,y_j\vee x_{m-1})\geqslant\mu_{\min}(\perp, y_j\vee x_{m-1})\geqslant\mu(\perp,\top).\]
Hence $\mu(\perp,y_j\vee x_{m-1})=\mu(\perp,\top)$. By  Proposition \ref{Pro: equivalent star} we obtain
\[\forall\,j\in\{1,\ldots,n\},\quad\mu(y_{j}\vee x_{m-1},y_{j-1}\vee x_{m-1})=\mu(\perp,\top).\]
If $y_{j}\vee x_{m-1}<y_{j-1}\vee x_{m-1}$, then for any $w\in\mathscr L$ such that
\[y_j\vee x_{m-1}<w<y_{j-1}\vee x_{m-1},\]
one has $y_{i-1}\not\leqslant w$. Since the pay off function is affine, the equality
\[\mu(y_{i-1}\wedge w,y_{i-1})=\mu(w,y_{i-1}\vee w)=\mu(w,y_{i-1}\vee x_{m-1})\]
is satisfied.
Since $y_i\leqslant y_{i-1}\wedge w<y_{i-1}$,  by Proposition \ref{Pro: equivalent star} and the condition 
\[\mu(y_i,y_{i-1}\wedge w)<\mu(y_i,y_{i-1})=\mu(\perp,\top)\]
we obtain 
\[\mu(y_{i-1}\wedge w,y_{i-1})>\mu(y_{i},y_{i-1})=\mu(\perp,\top).\]
Still by Proposition \ref{Pro: equivalent star} we deduce 
\[\mu(y_{i}\vee x_{m-1},w)<\mu(\perp,\top).\]
Therefore, from the sequence
\[\top=x_{m-1}\vee y_0\geqslant\ldots\geqslant x_{m-1}\vee y_n=x_{m-1}\]
we could extract a Jordan-Hölder filtration of the restriction of the Harder-Narasimhan game to $\mathscr L_{[x_{m-1},\top]}$, whose length is $\leqslant n-1$ since $x_{m-1}\vee y_i=y_{i-1}=x_{m-1}\vee y_{i-1}$. Iterating this procedure we obtain $m\leqslant n$. By the symmetry between the two Jordan-Hölder filtrations we get $m=n$.
\end{rema}

\begin{exem}
Back to the example derived in Section \ref{sec : coprimary filtration}, let $R$ be a N\oe therian ring, and let $M$ be a $\mathfrak{p}$-coprimary $R$-module, where $\mathfrak{p}$ is a prime ideal of $R$. A Jordan--Hölder filtration of $M$ is a filtration whose successive quotients are isomorphic to $R/\mathfrak{p}$.
\end{exem}

\backmatter
\bibliography{HNgame}
\bibliographystyle{smfplain}

\end{document}